\newtheorem{theorem}{Theorem}
\newtheorem{defn}[theorem]{Definition}
\newtheorem{ex}[theorem]{Example}
\newtheorem{lem}[theorem]{Lemma}
\newtheorem{prop}[theorem]{Proposition}
\newtheorem{cor}[theorem]{Corollary}
\newtheorem{remark}[theorem]{Remark}
\newcommand{\dsum}{\displaystyle\sum}
\newcommand{\dmin}{\displaystyle\min}
\newcommand{\cal}{\mathcal}
\def\R{\mathbb{R}}
\def\N{\mathbb{N}}
\def\a{\alpha}
\def\x{x^*}
\def\y{y^*}
\def\la{\lambda}
\def\sign{\mathrm{sign}}
\begin{document}
\title{Continuous location under  refraction}

\author{V\'ictor Blanco}

\address{Dept. Quantitative Methods for Economics \& Business, Universidad de Granada.}
\email{vblanco@ugr.es}

\author{Justo Puerto \and Diego Ponce}
\address{Dept. Estad{\'\i}stica e Investigaci\'on Operativa, Universidad de Sevilla.}
\email{puerto@us.es}
\email{dponce@us.es}

\date{\today}

 \keywords{Continuous location \and Refraction \and Second Order Cone Programming \and Norms.}

\subjclass[2010]{90B85, 90C22, 90C30, 47A30.}

\begin{abstract}
In this paper we address the problem of locating a new facility on a $d$-dimensional space when the distance measure ($\ell_p$- or polyhedral-norms) is different at each one of the sides of a given hyperplane $\mathcal{H}$. We relate this problem  with the physical phenomenon of refraction, and extends it to any finite dimension space and different distances at each one of the sides of any hyperplane. An application to this problem is the location of a facility within or outside an urban area where different distance measures must be used. We provide a new second order cone programming formulation, based on the $\ell_p$-norm representation given in \cite{BPE2014} that allows to solve, exactly, the problem in any finite dimension space with semidefinite programming tools. We also extend the problem to the case where the hyperplane is considered as a rapid transit media (a different third norm is also considered over $\mathcal{H}$) that allows the demand  to travel faster through $\mathcal{H}$ to reach the new facility. Extensive computational experiments run in Gurobi are reported in order to show the effectiveness of the approach.

\end{abstract}
\maketitle

\section{Introduction}

In the literature of transportation research it is frequent to address routing or distribution problems where the movement between points is modeled by the combination of different transportation modes, as for instance a standard displacement combined with several high speed lines. Similar approaches have been also applied in some location problems \cite{carrizosa-chia} considering that movements can be performed in a continuous framework or taking advantage of a rapid transit line modeled by an embedded network; and different applications of these models are mentioned in the location literature. For instance, the location of a facility within
or outside an urban area where, due to the layout of the streets within the city boundary,
the movement is slow, while outside this boundary in the rural area movement is fast. Another possible application, mentioned by Brimberg et. al \cite{brimberg2003} could be in a region where, due to the configuration of natural barriers or borders, there is a distinct
change in the orientation of the transportation network, as for instance in the southern area of Ontario.

Location problems are among the most important applications of Operation Research. Continuous location problems  appear very often in economic models of distribution or logistics, in statistics when one
tries to find an estimator from a data set or in pure optimization
problems where one looks for the optimizer of a certain function.
For a comprehensive overview of Location Theory, the reader is referred to \cite{DH02} or \cite{NP05}. Most of the papers in the literature devoted to continuous facility location consider that the decision space is $\R^d$, endowed with a unique distance. We consider here the problem where $\R^d$ is split by a hyperplane $\mathcal{H}=\{x \in \R^d: \a^t x = \beta\}$ for some $\a \in \R^d$ and $\beta \in \R$, into two regions ${\rm H}_A$  and ${\rm H}_B$, with sets of demand points $A$ and $B$, respectively. Each one of these regions is endowed with a (possibly different) norm $\|\cdot\|_{p_A}$ and $\|\cdot\|_{p_B}$, respectively,  to measure the distance within the corresponding halfspace. For the ease of presentation we will restrict ourselves to consider that the involved norms are  $\ell_p$, $p>1$, or  polyhedral. Therefore, we deal with the problem of finding the location of a new facility such that the overall  sum of the weighted distances from the demand points is minimized. This setting induces a transportation pattern where, in each \textit{side} of the hyperplane, the motion goes at a different speed.  This problem is not new and we can find antecedents in the literature in the papers by Parlar \cite{parlar}, Brimberg et. al \cite{brimberg2003,brimberg2005}, Fathaly \cite{fathaly}, among others,  and it can be seen as a natural generalization of the classical Weber's problem (see \cite{eckhardt}). Note that the distances between two points, depending of the region where they are located, may  measured with different norms. Hence, the distance between two points $x$ and $y$ is $\|x-y\|_{p_A}$ (resp. $\|x-y\|_{p_B}$)  if they belong to ${\rm H}_A$ (resp. to ${\rm H}_B$), or  the  length of the shortest weighted path  between them otherwise. 
Related problems have been analyzed in \cite{aybat,brimberg95,canovas2002,P-RCh2011,chia-valero2013}, among others.
In order to address this location problem, first we have  to solve the question of computing  the shortest path between points in different regions since our goal is to optimize a globalizing function of the length of those paths. We note in passing that some  partial answers in the plane and particular choices of distances can be found in \cite{franco2012}.

This problem is closely related with the physical phenomenon of \emph{refraction}.
Refraction describes the process that occurs when the light changes of medium, and then the phase velocity of a wave is changed. This \emph{effect} is also observed when sound waves pass from one medium into another, when water waves move into water of a different depth or, as in our case, when a traveler moves between opposite sides of the separating hyperplane. Snell's law states that for a given pair of media and a planar wave with a single frequency, there is a ratio relationship between  the sines of the angle of incidence $\theta_A$ and the angle of refraction $\theta_B$ and the indices of refraction $n_A$ and $n_B$ of the media: $ n_A \sin\theta_A - n_B \sin\theta_B=0$ (see Fig. \ref{fig:snell}). This law is based on Fermat's principle that states that the path followed by a light ray  between two points  is the one that takes the least time. As a by-product of the results in this paper, we shall find an extension of this law that also applies to transportation problems when more than one transportation mode is present in the model.

 Our goal in this paper is to design an approach to solve the above mentioned  family of location problems, for any combination of norms and in any dimension.  Moreover, we show an explicit formulation of these problems as second order cone programming (SOCP) problems (see \cite{alizadeh-goldfarb03} for further details) which enables the usage of standard commercial solvers to solve them.

\begin{figure}[h]
\begin{center}
\begin{tikzpicture}[scale=0.67]

    \coordinate (O) at (0,0) ;
    \coordinate (A) at (0,4) ;
    \coordinate (B) at (0,-4) ;
    \node[right] at (2,2) {$B$};
    \node[left] at (-2,-2) {$A$};
\node[right] at (130:5.2) {$b$};
\node[right] at (-70:4.24) {$a$};
    \draw[dash pattern=on5pt off3pt] (A) -- (B) ;
\draw (-4,0) -- (4,0);

     \draw[thick] (-0.1,0.1) -- (130:5.2);

\draw[thick] (0.05,-0.1) -- (-70:4.24);
    \draw (0,1) arc (90:130:1);
    \draw (0,-1.4) arc (270:290:1.4) ;
    \node[] at (280:1.8)  {$\theta_{A}$};
    \node[] at (110:1.4)  {$\theta_{B}$};

    \fill (130:5.2) circle (3.5pt);
    \filldraw[color=black, fill=white] (O) circle (3.5pt);
     \fill (-70:4.24) circle (3.5pt);

\end{tikzpicture}
\end{center}
\caption{Illustration of Snell's law on the plane.\label{fig:snell}}
\end{figure}
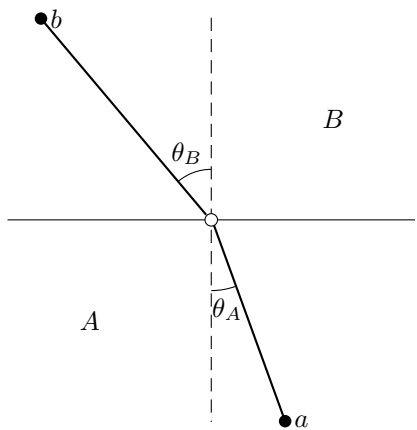

The paper is organized in \ref{s:6} sections. In Section \ref{s:2} we analyze the problem of computing shortest paths between pairs of points separated by a hyperplane  $\cal H$ when the distance measure is different in each one of the halfspaces defined by $\cal H$. We characterize the crossing (gate) points where such a path intersects the hyperplane, generalizing the well-known refraction principle (Snell's Law) for any dimension and any combination of $\ell_p$-norms. Section \ref{s:locpro} analyzes  location problems with distance measures induced by the above shortest paths. We provide a compact mixed-integer second order cone formulation for this problem and a transformation of that formulation into  two continuous SOCP problems. In Section \ref{s:4} the problem is extended to the case where the hyperplane is endowed with a third norm and thus, it can be used to reduce the length of the shortest paths between regions. Section \ref{s:5} is devoted to the computational experiments. We report results for different instances. We begin comparing our approach for the first model, with those presented (in dimension $2$ and for $\ell_1$- and $\ell_2$-norms) in \cite{parlar} and \cite{bsss} by using the data sets given there; then we test our methodology using the $50$-points data set in \cite{eilon-watson} (for dimension $2$ and different combinations of $\ell_p$-norms, both for the first and the second model); and finally we run a randomly generated set of larger instances ($5000$, $10000$ and $50000$ demand points) for different dimension ($2$, $3$ and $5$) and different combinations of $\ell_p$-norms. The paper ends, in Section \ref{s:6}, with some conclusions and an outlook for further research.

\section{Shortest paths between points separated by a hyperplane}
\label{s:2}

Let us assume that $\mathbb{R}^d$ is endowed with two $\ell_{p_i}$-norms  each one in the corresponding halfspace ${\rm H}_{i}$, $i\in \{A,B\}$ induced by the hyperplane $\mathcal{H}=\{x \in \R^d: \a^t x = \beta\}$. Let us write $\a^t=(\a _1,\ldots,\a_d)$ and assume further that $p_i=r_i/s_i$ with $r_i, s_i \in \N\setminus \{0\}$ and $\gcd(r_i,s_i)=1$, $i\in \{A,B\}$.

We are given two points $a,b\in \mathbb{R}^d$ such that $\a^t a<\beta$ and $\a^t b >\beta$, with weights $\omega_a$, $\omega_b$ respectively and a generic (but fixed) point $\x =(\x_{1},\ldots,\x_d)^t$ such that $\a^t \x =\beta$.

The following result characterizes the point $\x$ that provides the shortest weighted path between $a$ with weight $\omega_a$ and $b$ with weight $\omega_b$ using their corresponding norms in each side of $\mathcal{H}$.

\begin{lem} \label{le:le1}
If $1<p_A,p_B<+\infty$, the length $d_{p_Ap_B}(a,b)$ of the shortest weighted path between $a$ and $b$ is
$$d_{p_Ap_B}(a,b)= \omega_a \|\x -a\|_{p_A} +\omega_b \|\x -b\|_{p_B},$$
where $\x =(\x_1,\ldots,\x_d)^t$, $\a^t\x=\beta$ must satisfy the following conditions:
\begin{enumerate}
\item\label{1} For all $j$ such that $\a_j= 0$:
$$ \omega_a\left[ \frac{|\x_j-a_j|}{\|\x -a\|_{p_A}}\right]^{p_A-1} \sign(\x_j-a_j)+
\omega_b\left[ \frac{|\x_j-b_j|}{\|\x -b\|_{p_B}}\right]^{p_B-1} \sign(\x_j-b_j) =0.$$
\item\label{2}
For all $i,j$ such that $\a_i \a_j\neq 0$.
\end{enumerate}
\begin{eqnarray*}
\omega_a\left[ \frac{|\x_i-a_i|}{\|\x -a\|_{p_A}}\right]^{p_A-1} \frac{\sign(\x_i-a_i)}{\a_i}+
\omega_b\left[ \frac{|\x_i-b_i|}{\|\x -b\|_{p_B}}\right]^{p_B-1}  \frac{\sign(\x_i-b_i)} {\a_i}= \\
\omega_a\left[ \frac{|\x_j-a_j|}{\|\x -a\|_{p_A}}\right]^{p_A-1} \frac{\sign(\x_j-a_j)}{\a_j}+
\omega_b\left[ \frac{|\x_j-b_j|}{\|\x -b\|_{p_B}}\right]^{p_B-1}  \frac{\sign(\x_j-b_j)}{\a_j}.
\end{eqnarray*}
\end{lem}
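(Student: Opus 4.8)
The plan is to set this up as an unconstrained optimization problem over the crossing point $\x$ on the hyperplane and write down the first–order optimality (KKT/Lagrange) conditions. First I would observe that, since the norms are restricted to the respective halfspaces and both $a$ and $b$ lie strictly on opposite sides, any shortest weighted path from $a$ to $b$ must cross $\mathcal H$ and, on each side, it is a straight segment (because $\|\cdot\|_{p_i}$ is a norm, so the segment realises the distance inside the convex halfspace). Hence the problem reduces to
\[
\min_{x\in\R^d}\ \bigl\{\, \omega_a\|x-a\|_{p_A}+\omega_b\|x-b\|_{p_B} \ :\ \a^t x=\beta \,\bigr\},
\]
whose optimal solution is the desired $\x$, and $d_{p_Ap_B}(a,b)$ is its optimal value. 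The objective is convex (sum of norms composed with affine maps), finite everywhere, and the feasible set is an affine subspace, so a minimiser exists and the Lagrange/KKT conditions are both necessary and sufficient.

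Next I would compute the gradient. For $1<p_A,p_B<+\infty$ the functions $x\mapsto\|x-a\|_{p_A}$ and $x\mapsto\|x-b\|_{p_B}$ are differentiable away from $a$ and $b$ respectively (and $\x\neq a,b$ since $a,b\notin\mathcal H$), with
\[
\frac{\partial}{\partial x_j}\|x-a\|_{p_A}
=\Bigl[\tfrac{|x_j-a_j|}{\|x-a\|_{p_A}}\Bigr]^{p_A-1}\sign(x_j-a_j),
\]
and similarly for the $b$-term. Introducing a Lagrange multiplier $\la$ for the constraint $\a^t x=\beta$, the stationarity condition $\nabla_x\bigl(\omega_a\|x-a\|_{p_A}+\omega_b\|x-b\|_{p_B}\bigr)=\la\,\a$ reads, coordinatewise,
\[
\omega_a\Bigl[\tfrac{|\x_j-a_j|}{\|\x-a\|_{p_A}}\Bigr]^{p_A-1}\sign(\x_j-a_j)
+\omega_b\Bigl[\tfrac{|\x_j-b_j|}{\|\x-b\|_{p_B}}\Bigr]^{p_B-1}\sign(\x_j-b_j)=\la\,\a_j
\]
for every $j=1,\dots,d$. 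Setting $\a_j=0$ gives exactly condition~\eqref{1}. For indices with $\a_j\neq0$, dividing by $\a_j$ shows that the left-hand side divided by $\a_j$ equals $\la$ for all such $j$; equating the expressions for two indices $i,j$ with $\a_i\a_j\neq0$ eliminates $\la$ and yields precisely the displayed equality in condition~\eqref{2}. Conversely, any $\x$ on $\mathcal H$ satisfying \eqref{1} and \eqref{2} lets one define $\la$ from any coordinate with $\a_j\neq0$, recovering the full stationarity system, which by convexity certifies global optimality; then $\x$ is a valid crossing point and the value formula follows.

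The only genuinely delicate point is justifying that the shortest weighted path is of the claimed ``two-segment'' form and that it crosses $\mathcal H$ exactly once, so that the path length is honestly $\omega_a\|\x-a\|_{p_A}+\omega_b\|\x-b\|_{p_B}$ for some $\x\in\mathcal H$; one should argue that replacing any portion of a path lying within a single halfspace by the straight segment between its endpoints does not increase the weighted length (convexity of the halfspace plus the triangle inequality for the norm), and that multiple crossings can be shortcut. I would also note the edge case where the gradient of a norm term is not defined — this cannot happen here since $\x\neq a$ and $\x\neq b$, because $\a^t a<\beta$ and $\a^t b>\beta$ while $\a^t\x=\beta$. Everything else is routine differentiation of the $\ell_p$-norm and bookkeeping with the multiplier $\la$.
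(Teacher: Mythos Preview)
Your proposal is correct and follows essentially the same approach as the paper: formulate the problem as a constrained convex minimization over $x\in\mathcal H$, form the Lagrangian, and read off the componentwise stationarity conditions to obtain \eqref{1} when $\a_j=0$ and \eqref{2} by eliminating $\la$ between two coordinates with $\a_i\a_j\neq0$. If anything, you are more careful than the paper, since you also justify the two-segment form of the shortest path, the differentiability of the norm terms at $\x$, and the converse implication.
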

\begin{proof}
Computing $d_{p_Ap_B}(a,b)$ reduces to solving the following problem:
\begin{equation*}
\dmin_{x: \a^tx=\beta} \omega_a\|x-a\|_{p_A}+\omega_b\|x-b\|_{p_B}.
\end{equation*}
The above problem is a convex minimization problem with a linear constraint.  Consider the Lagrangian function $L(x,\la)=\omega_a\|x-a\|_{p_A}+\omega_b\|x-b\|_{p_B}+\la(\a^t x-\beta)$. Then necessary and sufficient optimality conditions read as:
\begin{eqnarray*}
\omega_a\left[ \frac{|x_j-a_j|}{\|x-a\|_{p_A}}\right]^{p_A-1} \sign(x_j-a_j) + \omega_b\left[ \frac{|x_j-b_j|}{\|x-b\|_{p_B}}\right]^{p_B-1} \sign(x_j-a_j) +\la \a_j =0,& \; j=1,\ldots,d\\
\a^t x-\beta =0.
\end{eqnarray*}

First of all, if {$\alpha_j=0$} we obtain condition \ref{1}.  from the first set of equations. Next, if $\la\a_j\neq 0$ the above system gives rise to condition \ref{2}.

\end{proof}

In the case where one of the two norms involved is not strict, i.e. $p_A$ or $p_B\in \{1,\infty\}$  there are non-differentiable points besides the origin and the optimality condition is obtained using subdifferential calculus. Denote by $\partial f(x)$ the subdifferential set of $f$ at $x$.

\begin{lem} \label{le:le2}
If $p_A=+\infty$ or $p_B=1$, the length $d_{p_Ap_B}(a,b)$ of the shortest weighted path between $a$ and $b$ is
$$d_{p_Ap_B}(a,b)= \omega_a \|\x -a\|_{p_A} +\omega_b \|\x -b\|_{p_B},$$
where $\x =(\x_1,\ldots,\x_d)^t$, $\a^t\x=\beta$ must satisfy:
$$ \lambda \a \in \omega_a \partial  \|\x-a\|_{p_A}+\omega_b \partial   \|\x-b\|_{p_B}, \; \mbox{ for some } \lambda \in \mathbb{R}.$$
\end{lem}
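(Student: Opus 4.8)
The plan is to mirror the proof of Lemma \ref{le:le1}, replacing the gradient-based first-order optimality condition by its subdifferential analogue. As in the differentiable case, computing $d_{p_Ap_B}(a,b)$ amounts to solving
\[
\dmin_{x:\, \a^t x=\beta}\ \omega_a\|x-a\|_{p_A}+\omega_b\|x-b\|_{p_B},
\]
which is the minimization of a finite convex function over an affine subspace, hence a convex program for which the Karush--Kuhn--Tucker conditions (in subdifferential form) are both necessary and sufficient. The objective $f(x)=\omega_a\|x-a\|_{p_A}+\omega_b\|x-b\|_{p_B}$ is convex and everywhere finite on $\R^d$, so it is continuous and its subdifferential is nonempty at every point; the constraint $\a^t x=\beta$ is affine with a nonempty (in fact full-dimensional relative) feasible set, so no constraint qualification is needed beyond feasibility for the affine case.

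First I would form the Lagrangian $L(x,\la)=\omega_a\|x-a\|_{p_A}+\omega_b\|x-b\|_{p_B}+\la(\a^t x-\beta)$ and invoke the standard result that $\x$ is optimal if and only if there is $\la\in\R$ with $0\in\partial_x L(\x,\la)$ together with primal feasibility $\a^t\x=\beta$. Next I would apply the Moreau--Rockafellar sum rule for subdifferentials: since $x\mapsto\omega_a\|x-a\|_{p_A}$, $x\mapsto\omega_b\|x-b\|_{p_B}$ and $x\mapsto\la(\a^t x-\beta)$ are all finite convex functions on $\R^d$, we have
\[
\partial_x L(\x,\la)=\omega_a\,\partial\|\x-a\|_{p_A}+\omega_b\,\partial\|\x-b\|_{p_B}+\{\la\a\}.
\]
Combining this with $0\in\partial_x L(\x,\la)$ gives exactly $-\la\a\in\omega_a\,\partial\|\x-a\|_{p_A}+\omega_b\,\partial\|\x-b\|_{p_B}$, and replacing $\la$ by $-\la$ yields the stated inclusion $\la\a\in\omega_a\,\partial\|\x-a\|_{p_A}+\omega_b\,\partial\|\x-b\|_{p_B}$ for some $\la\in\R$. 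Finally, since every such $\x$ is a global minimizer of $f$ over the hyperplane $\mathcal H$ and the shortest weighted path from $a$ to $b$ crossing $\mathcal H$ has length $\omega_a\|x-a\|_{p_A}+\omega_b\|x-b\|_{p_B}$ for its gate point $x$ (any path meets $\mathcal H$ in at least one point, and restricting to a single crossing is optimal by the triangle inequality applied separately in each halfspace), the optimal value equals $\omega_a\|\x-a\|_{p_A}+\omega_b\|\x-b\|_{p_B}$.

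The only genuinely delicate point is making sure the subdifferential sum rule applies without a relative-interior hypothesis: this is guaranteed here because all summands are finite-valued (polyhedral or $\ell_p$) convex functions on all of $\R^d$, so their effective domains are the whole space and the interiors trivially intersect. A secondary point worth a sentence is why the minimum is attained at all when $p_A=\infty$ or $p_B=1$: the objective is coercive on $\mathcal H$ (both norm terms go to $+\infty$ as $\|x\|\to\infty$ along the hyperplane, since $a$ is strictly on one side and $b$ strictly on the other, so neither $\|x-a\|_{p_A}$ nor $\|x-b\|_{p_B}$ can stay bounded), so a minimizer exists and the KKT inclusion is satisfied there. Everything else is a routine transcription of the argument for Lemma \ref{le:le1} with $\partial$ in place of $\nabla$.
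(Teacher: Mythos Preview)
Your proposal is correct and follows exactly the approach the paper takes: the paper does not give a detailed proof of Lemma~\ref{le:le2} but simply remarks, immediately before stating it, that in the non-strict cases the optimality condition is ``obtained using subdifferential calculus,'' i.e.\ by replacing the gradient in the proof of Lemma~\ref{le:le1} with the subdifferential. Your write-up is a faithful (and more careful) expansion of that one-line indication, including the justification of the Moreau--Rockafellar sum rule and existence of a minimizer, which the paper leaves implicit.
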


We note in passing that the optimality condition in Lemma \ref{le:le2} gives rise, whenever $p_A$ or $p_B$ are specified, to usable expressions. In particular, if both $p_A$ and $p_B\in \{1,+\infty\}$ the resulting problem is linear and the condition is very easy to handle. Lemmas \ref{le:le1} and \ref{le:le2} extend the results in \cite{franco2012} to the case of general  norms and any finite dimension greater than 2.

Next consider the following embedding of $x\in \mathbb{R}^d\rightarrow  (x,\a^tx-\beta) \in \mathbb{R}^{d+1}$. Take any point $\x$ such that $\a^t\x=\beta$. Clearly, $a,\x$ map to $(a,\a^t a-\beta),\; (\x,0)$, respectively. Then, let us denote by $\gamma_a$ the angle between the vectors $(a-\x,0)$ and $(a-\x,\a^ta-\beta)$. Now, we can interpret $\frac{|\a^ta-\beta|}{\|a-\x\|_{p_A}}$ as a generalized sine of the angle $\gamma_a$ (see Fig. \ref{genangles}). The reader may note that in general this ratio is not a trigonometric function, unless $p_i=2$, $i\in\{A,B\}$. This way  we define by abusing of notation
\begin{equation*}
\sin_{p_A}\gamma_a=\frac{|\a^ta-\beta|}{\|a-\x\|_{p_A}}  \quad (\mbox{analoguosly } \sin_{p_B}\gamma_b=\frac{|\a^tb-\beta|}{\|b-\x\|_{p_B}}).
\end{equation*}

The above expression can be expressed by components, namely:
\begin{equation} \label{eq:tan-a}
\sin_{p_A}\gamma_a=\left|\sum_{j=1}^d \frac{\a_j a_j-\a_j \x_j}{\|a-\x\|_{p_A}}\right|, \quad (\mbox{observe that } \a^t \x=\beta).
\end{equation}

Finally, by similarity we shall denote the non-negative value of each component in the previous sum as
$$ \sin_{p_A}\gamma_{a_j}:=\frac{|\a_j a_j-\a_j\x_j|}{\|a-\x\|_{p_A}}, \; j=1,\ldots,d.$$

With the above convention we can state a result that extend the well-known Snell's Law to this framework. It relates the gate point $\x $ in the hyperplane $\a^t x=\beta$  between two points $a$ and $b$ in terms of the generalized sine \eqref{eq:tan-a} of the angles $\gamma_a$ and $\gamma_b$.

\begin{center}
\begin{minipage}{0.5\linewidth}
\centering
\begin{figure}[H]
\begin{center}
\begin{tikzpicture}[scale=0.8]
\draw (0,-1) -- (3,4) -- (12,4) -- (9,-1) -- cycle;
\draw[thick] (2,0.5) -- (10, 2.5);

    \node[] at (6,2)  {$x^*$};

\draw[dashed] (4,2) -- (4,3);
\node[rotate=90] at (3.6, 2.5) {\scriptsize $\a^t b - \beta$};

\node[] at (4,3.3)  {{$b$}};
\draw (4,3) -- (6,1.5);
\draw (4,2) -- (6,1.5);
\node[] at (5.15,1.9)  {\scriptsize$\gamma_b$};
\draw[densely dotted] (5,2.25) arc (120: 180: 0.5);

\draw (7,0) -- (6,1.5);
\draw (7,1) -- (6,1.5);
\draw[dashed] (7,0)--(7,1);
\node[rotate=270] at (7.25, 0.5) {\scriptsize $\a^t a - \beta$};
\node[] at (7,-0.4)  {$a$};
\node[] at (6.65,0.95)  {\scriptsize$\gamma_a$};
\draw[densely dotted] (6.25,1.125) arc (315: 330: 0.75);

\filldraw[color=black, fill=white] (6,1.5) circle (2pt);
\filldraw [gray] (4,2) circle (2pt);
\filldraw [gray] (4,3) circle (2pt);
\filldraw [gray] (7,0) circle (2pt);
\filldraw [gray] (7,1) circle (2pt);
\node[rotate=15] at (9,2.5)  {$\a^t z = \beta$};
\end{tikzpicture}
\end{center}
\caption{Illustrative example of the generalized sines.\label{genangles}}
\end{figure}
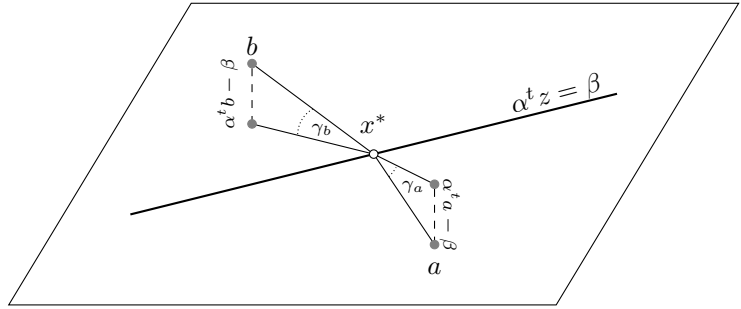
\end{minipage}
\end{center}

\begin{cor}[Snell's-like result]
The point $x^*$, $\x =(\x_1,\ldots,\x_d)^t$, $\a^t\x=\beta$ that defines the  shortest weighted path between $a$ and $b$ is determined by the following necessary and sufficient conditions:
\begin{enumerate}
\item For all $j$ such that $\a_j=0$:
$$ \omega_a\left[ \frac{|\x_j-a_j|}{\|\x -a\|_{p_A}}\right]^{p_A-1} \sign(\x_j-a_j)+
\omega_b\left[ \frac{|\x_j-b_j|}{\|\x -b\|_{p_B}}\right]^{p_B-1} \sign(\x_j-b_j) =0.$$
\item For all  $i,j,\; \a_i\a_j\neq 0$.
\begin{eqnarray*}
\omega_a\left[ \frac{\sin_{p_A} \gamma_{a_i}}{|\a_i|}\right]^{p_A-1} \frac{\sign(\x_i-a_i)}{\a_i}+
\omega_b\left[ \frac{\sin_{p_B} \gamma_{b_i}}{|\a_i|}\right]^{p_B-1}  \frac{\sign(\x_i-b_i)} {\a_i}= \\
\omega_a\left[ \frac{\sin_{p_A} \gamma_{a_j}}{|\a_j|}\right]^{p_A-1} \frac{\sign(\x_j-a_j)}{\a_j}+
\omega_b\left[ \frac{\sin_{p_B} \gamma_{b_j}}{|\a_j|}\right]^{p_B-1}  \frac{\sign(\x_j-b_j)}{\a_j},
\end{eqnarray*}
\end{enumerate}
\end{cor}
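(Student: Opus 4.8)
The plan is to obtain the corollary as a direct rewriting of the optimality conditions of Lemma~\ref{le:le1}, valid in the strict regime $1<p_A,p_B<+\infty$ in which that lemma applies (in the non-strict case $p_A=+\infty$ or $p_B=1$ one would instead unwind Lemma~\ref{le:le2} through the subdifferential; the displayed statement is the differentiable version). Condition~(1) of the corollary is literally condition~\ref{1} of Lemma~\ref{le:le1} restricted to the indices $j$ with $\a_j=0$, so there is nothing to prove there beyond invoking the lemma. The whole content is to translate condition~\ref{2} of Lemma~\ref{le:le1} into the language of the generalized sines.

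First I would record the elementary identity coming straight from the definition $\sin_{p_A}\gamma_{a_j}:=|\a_ja_j-\a_j\x_j|/\|a-\x\|_{p_A}$: since $|\a_ja_j-\a_j\x_j|=|\a_j|\,|\x_j-a_j|$ and $\|a-\x\|_{p_A}=\|\x-a\|_{p_A}\neq 0$ (because $a\notin\cal H$ while $\x\in\cal H$), one has
$$\frac{|\x_j-a_j|}{\|\x-a\|_{p_A}}=\frac{\sin_{p_A}\gamma_{a_j}}{|\a_j|},\qquad \frac{|\x_j-b_j|}{\|\x-b\|_{p_B}}=\frac{\sin_{p_B}\gamma_{b_j}}{|\a_j|},$$
and the same with $j$ replaced by $i$. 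These equalities make sense precisely because $\a_i\a_j\neq 0$, so that division by $|\a_i|$ and $|\a_j|$ is legitimate. I would stress that the generalized sine encodes only the \emph{magnitude} ratio scaled by $|\a_j|$: the sign factors $\sign(\x_j-a_j)$, $\sign(\x_j-b_j)$ and the \emph{signed} denominators $\a_i$ appearing in Lemma~\ref{le:le1} are carried over unchanged.

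Substituting the above expressions for the four ratios into the two-index equality of condition~\ref{2} of Lemma~\ref{le:le1} produces, term by term, the displayed equality of condition~(2) of the corollary. Since $t\mapsto t/|\a_j|$ is a bijection of $[0,+\infty)$ onto itself whenever $\a_j\neq 0$, this rewriting is reversible, so the corollary's conditions are equivalent to those of Lemma~\ref{le:le1}; by that lemma they are therefore necessary and sufficient for $\x$ to be the gate point of the shortest weighted path, and the value formula $d_{p_Ap_B}(a,b)=\omega_a\|\x-a\|_{p_A}+\omega_b\|\x-b\|_{p_B}$ transfers verbatim.

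Because the argument is a pure substitution, there is no genuine obstacle; the only step requiring care is the bookkeeping of absolute values and signs — making explicit that $\sin_{p_A}\gamma_{a_j}$ absorbs exactly the factor $|\a_j|$ while $\sign(\x_j-a_j)$ and the signed denominator $\a_i$ stay intact — so that the two formulations coincide coefficient by coefficient.
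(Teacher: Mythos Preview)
Your proposal is correct and matches the paper's approach: the paper gives no explicit proof of this corollary, treating it as immediate from Lemma~\ref{le:le1} together with the definition of the generalized componentwise sines, which is exactly the substitution you spell out. Your careful bookkeeping of the absolute values versus signed factors is in fact more detailed than anything the paper writes.
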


\begin{cor}[Snell's Law]
 \label{cor4}
 If $d=2$, $p_A=p_B=2$ and $\mathcal{H}=\{(x_1, x_2) \in \R^2: \alpha_1x_1 + \alpha_2 x_2 = \beta\}$ with $\alpha_1, \alpha_2, \beta \in \R$, the point $\x$  satisfies that
$$ \omega_a \sin \theta_a = \omega_b \sin \theta_b,$$
where  $\theta_a$ and $\theta_b$ are:  1) if $\alpha_1\leq \alpha_2$, the angles between the vectors $a-\x$ and $(-\alpha_2,\alpha_1)^t$, and $b-\x$ and $(\alpha_2, -\alpha_1)^t$,  or 2)  if $\alpha_1 > \alpha_2$, the angles between  the vectors $a-\x$ and  $(\alpha_2,-\alpha_1)^t$, and $b-\x$ and $(-\alpha_2, \alpha_1)^t$.
\end{cor}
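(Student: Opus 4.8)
The plan is to read off Corollary~\ref{cor4} from Lemma~\ref{le:le1} (which applies since $2\in(1,+\infty)$) by specializing to $d=2$ and $p_A=p_B=2$. The decisive point is that then $p_A-1=p_B-1=1$, so every bracketed power in Lemma~\ref{le:le1} collapses to its base and each product $[\,|\x_j-a_j|/\|\x-a\|_2\,]^{p_A-1}\sign(\x_j-a_j)$ reduces to $(\x_j-a_j)/\|\x-a\|_2$. Writing the unit vectors $u^{a}=(\x-a)/\|\x-a\|_2$ and $u^{b}=(\x-b)/\|\x-b\|_2$, the optimality conditions of Lemma~\ref{le:le1} become \emph{linear} in $u^{a}$ and $u^{b}$; being necessary and sufficient, it only remains to rewrite them in geometric terms. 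I would split according to whether $\mathcal H$ is coordinate-aligned: if $\a_1\a_2\neq0$ I use condition~\ref{2} with $i=1$, $j=2$; if $\a_1=0$ or $\a_2=0$, condition~\ref{2} is vacuous in dimension two and condition~\ref{1} for the vanishing coordinate directly gives the coordinate-aligned instance of the relation (the situation pictured in Fig.~\ref{fig:snell}).

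In the generic case, condition~\ref{2} reads $\frac1{\a_1}(\omega_a u^{a}_1+\omega_b u^{b}_1)=\frac1{\a_2}(\omega_a u^{a}_2+\omega_b u^{b}_2)$; multiplying by $\a_1\a_2$ and collecting the terms attached to $a$ and to $b$ yields
\[
\omega_a\,\frac{\a_2(\x_1-a_1)-\a_1(\x_2-a_2)}{\|\x-a\|_2}
+\omega_b\,\frac{\a_2(\x_1-b_1)-\a_1(\x_2-b_2)}{\|\x-b\|_2}=0 .
\]
Equivalently, $\omega_a u^{a}+\omega_b u^{b}$ is a scalar multiple of $\a$ --- this is the Lagrange condition of Lemma~\ref{le:le1} read directly --- so the component of $\omega_a u^{a}+\omega_b u^{b}$ tangent to $\mathcal H$ vanishes.

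What remains is to match the quantities above with the trigonometric expressions in the statement, and this sign-and-orientation bookkeeping is the only non-mechanical step. The numerator $\a_2(\x_1-a_1)-\a_1(\x_2-a_2)$ is the determinant $\det[\,\x-a\ |\ \a\,]$, so dividing it by $\|\x-a\|_2\,\|\a\|_2$ produces, up to sign, the sine of the angle that the travel direction $\x-a$ makes with the normal $\a$, i.e.\ the angle of incidence at $\x$; likewise $\x-b$ (equivalently $b-\x$) gives the angle of refraction. The displayed identity then forces these two contributions to balance and reduces to $\omega_a\sin\theta_a=\omega_b\sin\theta_b$ for the incidence/refraction angles $\theta_a,\theta_b$ of Fig.~\ref{fig:snell}; expressing these angles through the coordinates of $\a$ amounts to choosing, according to the sign of $\a_1-\a_2$, which of the two unit vectors $(\pm\a_2,\mp\a_1)/\|\a\|_2$ along $\mathcal H$ to take as reference so that both angles fall in the admissible range --- precisely the case distinction in the statement. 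I expect this last verification --- that the case distinction and the reference vectors reproduce the claimed relation for every admissible $a,b$ and every orientation of $\mathcal H$ --- to be the only real obstacle; the algebra itself is routine.
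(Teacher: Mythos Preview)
Your argument is correct but follows a different route from the paper. The paper's proof exploits the isotropy of the $\ell_2$-norm: since Euclidean distance is rotation-invariant, one may assume without loss of generality that $\mathcal H=\{x_2=0\}$ and, after a translation, that $\x$ is the origin. In those coordinates $\a=(0,1)$, so condition~\ref{2} of Lemma~\ref{le:le1} is vacuous and condition~\ref{1} for $j=1$ gives directly $\omega_a\,|a_1|/\|a\|_2=\omega_b\,|b_1|/\|b\|_2$, which is recognized at once as $\omega_a\sin\theta_a=\omega_b\sin\theta_b$.

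Your approach instead keeps $\a$ general and reads condition~\ref{2} with $i=1,\,j=2$ in the original coordinates, obtaining the determinant identity and then interpreting $\det[\x-a\mid\a]/(\|\x-a\|_2\|\a\|_2)$ as a signed sine. This is perfectly valid and has the virtue of making the relation to Lemma~\ref{le:le1} entirely explicit without any change of variables; the cost is the sign/orientation bookkeeping you yourself flag as the only non-mechanical step. The paper's change of coordinates simply hides that bookkeeping inside the w.l.o.g.\ reduction, trading your case analysis on $\a_1\a_2$ and on the sign of $\a_1-\a_2$ for a single normalized configuration. Both arguments are short; the paper's is terser, yours is more self-contained.
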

\begin{proof}
Since for $p=2$ the $\ell_2$-norm is isotropic, we can assume w.l.o.g. that the separating line is $x_2=0$. Thus, after a change of variable $\x$ can be taken as the origin of coordinates and $a=(a_1,a_2)$ such that $a_1\ge 0$, $a_2<0$,  $b=(b_1,b_2)$ such that $b_1\le 0$, $b_2>0$.

Next, the optimality condition using Lemma \ref{le:le1} is $\omega_a \frac{|a_1|}{\|a\|_2}-\omega_b \frac{|b_1|}{\|b\|_2}=0$. The result follows since $\sin \theta_a=\frac{|a_1|}{\|a\|_2}$  and $\sin \theta_b=\frac{|b_1|}{\|b\|_2}$.

\end{proof}

\section{Location problems with demand points in two media separated by a hyperplane\label{s:locpro}}

In this section we analyze the problem of locating a new facility to serve a set of given demand points which are classified into two classes, based on a separating hyperplane. The peculiarity of the model is that different norms to measure distances may be considered within each one of the halfspaces induced by the hyperplane.

Let $A$ and $B$ be two finite sets of given demand points in $\R^d$, and $\omega_a$ and $\omega_b$ be the weights  of the demand points  $a \in A$ and $b \in B$, respectively. Consider $\mathcal{H}=\{x \in \R^d:  \a^t x  = \beta\}$ to be the separating hyperplane in $\R^d$ with $\a \in \R^d$ and $\beta \in \R$, and
$$
{\rm H}_A = \{x \in \R^d:  \a^t x  \leq \beta\} \quad \text{and} \quad {\rm H}_B = \{x \in \R^d: \quad \a^t x > \beta\}.
$$
We assume that $\R^d$ is endowed with a mixed norm such that the distance measure in ${\rm H}_A$ is induced by a norm $\|\cdot\|_{p_A}$, the distance measure in ${\rm H}_B$ is induced by the norm $\|\cdot\|_{p_B}$ and $p_A\ge p_B$. We assume further that $p_i=r_i/s_i$,  with $r_i, s_i \in \N \setminus \{0\}$ and $\gcd(r_i,s_i)=1$, $i\in \{A,B\}$.
We observe that the hypothesis that $p_A\ge p_B$ ensures that the two media induce movements at different \textit{speed} and that it is always \textit{faster} to move within ${\rm H}_A$.

The goal is to find the location of a single new facility in $\R^d$ so that the sum of the distances from the demand points to the new facility is minimized. The problem can be stated as:

\begin{equation}
f^*:=\inf_{x \in \R^d} \dsum_{a\in A} \omega_a\; d_{p_A,p_B}(x, a) + \dsum_{b\in B} \omega_b\, d_{p_A,p_B}(x,b) \label{p1}\tag{${\rm P}$}
\end{equation}
where for two points $x, y \in \R^d$, $d_{p_A,p_B}(x,y)$ is the length of the shortest path between $x$ and $y$, as determined by lemmas \ref{le:le1} and \ref{le:le2}.

Note that the shortest paths can be explicitly described by distinguishing whether the new location is in ${\rm H}_A$ or ${\rm H}_B$.
Let $x \in \R^d$, then:
$$
d_{p_A,p_B}(x, a) = \left\{\begin{array}{cl}
\|x-a\|_{p_A} & \mbox{if $x \in {\rm H}_A$,}\\
\dmin_{y\in \mathcal{H}} \|y-a\|_{p_A} + \|x-y\|_{p_B} & \mbox{if $x\in {\rm H}_B$,}
\end{array}\right.
$$
and
$$
d_{p_A,p_B}(x, b) = \left\{\begin{array}{cl}
\|x-b\|_{p_B} & \mbox{if $x \in {\rm H}_B$,}\\
\dmin_{y\in \mathcal{H}} \|y-b\|_{p_B} + \|x-y\|_{p_A} & \mbox{if $x\in {\rm H}_A$.}
\end{array}\right.
$$

\begin{theorem}
\label{t1}
Assume that $\min\{|A|,|B|\}>2$. If the points in $A$ or $B$ are not collinear and $p_A<+\infty$, $p_B>1$ then Problem \eqref{p1} always has a unique optimal solution.
\end{theorem}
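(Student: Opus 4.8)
The plan is to establish existence and uniqueness separately. For existence, the natural route is to show that the objective function of \eqref{p1}, call it $F(x):=\sum_{a\in A}\omega_a d_{p_A,p_B}(x,a)+\sum_{b\in B}\omega_b d_{p_A,p_B}(x,b)$, is continuous and coercive (i.e. $F(x)\to+\infty$ as $\|x\|\to\infty$), so that a minimizer exists by Weierstrass on a large compact ball. Continuity follows because each $d_{p_A,p_B}(\cdot,a)$ is a minimum over a closed set of continuous functions (the explicit case distinction given just before the theorem shows it is the pointwise minimum of $\|x-a\|_{p_A}$ against $\min_{y\in\mathcal H}\|y-a\|_{p_A}+\|x-y\|_{p_B}$, which is itself continuous as an infimal convolution type expression). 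Coercivity follows because for $x$ far from the data points the path length is bounded below by a positive multiple of $\|x\|$ minus a constant; indeed whichever halfspace $x$ lies in, $d_{p_A,p_B}(x,a)\ge \min\{1,\text{something}\}\cdot\|x-a\|_{p_B}$ up to norm-equivalence constants, and summing over the (nonempty, size $>2$) demand sets gives a coercive lower bound.

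For uniqueness, the key structural fact I would exploit is that $F$ is convex and I would argue it is in fact \emph{strictly} convex along any segment that is not contained in a single line through the non-collinear demand set. First I would record that $d_{p_A,p_B}(\cdot,a)$ is convex: this is exactly the statement that the shortest-path distance to a fixed point, under the two-media model, is a convex function of the free endpoint — which can be seen from the formula $d_{p_A,p_B}(x,a)=\min_{y\in\mathcal H}\big(\|y-a\|_{p_A}+\|x-y\|_{p_B}\big)$ on ${\rm H}_B$ together with $\|x-a\|_{p_A}$ on ${\rm H}_A$; a partial minimization of the jointly convex function $(x,y)\mapsto \|y-a\|_{p_A}+\|x-y\|_{p_B}$ over the convex set $y\in\mathcal H$ yields a convex function of $x$, and one checks these two pieces agree and glue convexly across $\mathcal H$ (this gluing is where the hypotheses $p_A<\infty$, $p_B>1$, $p_A\ge p_B$ matter, guaranteeing the pieces meet with matching ``one-sided derivatives'' in the sense of subgradients). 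Hence $F$ is convex as a nonnegative combination.

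Strict convexity on the relevant directions is the heart of the matter. Take $x\ne x'$ and suppose $F$ is affine on $[x,x']$; I want a contradiction. Each summand is convex, so each must be affine on $[x,x']$. For a summand coming from a point $a$ with $x,x'$ both in the open halfspace ${\rm H}_A$, the summand is $\omega_a\|x-a\|_{p_A}$, and since $1<p_A<\infty$ the $\ell_{p_A}$-norm is strictly convex, forcing $a$ to lie on the line through $x,x'$; similarly strict convexity of $\|\cdot\|_{p_B}$ (valid since $p_B>1$, and I must handle $p_B$ possibly $=\infty$ — here $p_A\ge p_B$ forces $p_B$ finite, so this is fine) handles summands from points $b$ with $x,x'\in {\rm H}_B$. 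The remaining, genuinely new case is when the segment (or an endpoint) crosses $\mathcal H$, so the relevant distance uses the composite formula; here I would argue that affinity of $\min_{y\in\mathcal H}(\|y-a\|_{p_A}+\|x-y\|_{p_B})$ along $[x,x']$, via the optimal gate point $y=y(x)$, again forces $a$ and the whole configuration to be collinear with the segment. Since at least one of $A$, $B$ is not collinear and both have more than two points, not every demand point can lie on a single line, so some summand is strictly convex on $[x,x']$, contradiction. Therefore $F$ has at most one minimizer, and combined with existence the optimal solution is unique.

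I expect the main obstacle to be the crossing case in the uniqueness argument: rigorously showing that affinity of the composite shortest-path distance along a segment that meets $\mathcal H$ forces collinearity of the data point with that segment. This requires understanding how the optimal gate point $y(x)$ varies with $x$ — it is itself a (Lipschitz) function of $x$ obtained by solving the strictly convex inner problem — and ruling out a coincidental cancellation of curvature between the two legs $\|y(x)-a\|_{p_A}$ and $\|x-y(x)\|_{p_B}$. A clean way to finish is to note that along a segment not through $a$, any composition of a strictly convex norm with an affine map is strictly convex unless the segment direction is ``radial'' at $a$, and then to check that the two legs cannot be simultaneously radial and still satisfy the optimality (Snell) condition from Lemma~\ref{le:le1} unless everything is collinear; alternatively one can perturb and use the first-order optimality conditions directly. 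The other steps (continuity, coercivity, plain convexity) are routine.
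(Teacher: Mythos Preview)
Your plan is viable but takes a harder road than the paper. The paper does \emph{not} work with $F(x)$ on $\mathbb{R}^d$ directly; instead it lifts to the augmented variable $(x,y)$ where $y=(y_b)_{b\in B}$ (resp.\ $(y_a)_{a\in A}$) collects the gate points, and defines
\[
f_{\le}(x,y)=\sum_{a\in A}\omega_a\|x-a\|_{p_A}+\sum_{b\in B}\omega_b\|x-y_b\|_{p_A}+\sum_{b\in B}\omega_b\|y_b-b\|_{p_B}
\]
on $\{\alpha^tx\le\beta,\ \alpha^ty_b=\beta\}$, and an analogous $f_>$ on the other halfspace. Each piece is then a plain sum of norms in independent variables, so strict convexity is immediate from the non-collinearity hypothesis applied to the term $\sum_{a\in A}\omega_a\|x-a\|_{p_A}$ (with $1<p_A<\infty$). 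The case where the infimum of the second subproblem sits on the boundary $\mathcal H$ is handled by a direct comparison showing that the first subproblem then gives a strictly smaller value. No gluing argument and no analysis of how the optimal gate point varies with $x$ is needed.

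By contrast, your route requires two nontrivial steps that you flag but do not close. First, convexity of $d_{p_A,p_B}(\cdot,a)$ across $\mathcal H$: the two pieces $\|x-a\|_{p_A}$ on $H_A$ and $h(x)=\min_{y\in\mathcal H}\|y-a\|_{p_A}+\|x-y\|_{p_B}$ on $\overline H_B$ agree on $\mathcal H$, but the glued function is not a pointwise minimum of convex functions (indeed $h(x)\ge\|x-a\|_{p_A}$ everywhere when $p_A\ge p_B$), so convexity needs an explicit segment argument splitting at the crossing point; this works, but your ``matching one-sided derivatives'' sketch is not the right mechanism. Second, strict convexity when the segment $[x,x']$ crosses $\mathcal H$: your proposed finish via the Snell condition and ``radial'' directions is plausible but not carried out, and it is exactly this difficulty that the paper's lifting sidesteps entirely. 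So your proposal is not wrong, but the paper's decomposition into two lifted subproblems is the cleaner idea you are missing.
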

\begin{proof}
Let us define the function $f(x,y):\mathbb{R}^{d\times (|A| +|B|) d} \rightarrow \mathbb{R}$ as:
$$ f(x,y)=\left\{ \begin{array}{ll} \displaystyle  f_{\le} (x,y):=\sum_{a\in A} \omega_a\|x-a\|_{p_A} +\sum_{b\in B} \omega_b \|x-y_b\|_{p_A} + \sum_{b\in B} \omega_b \|y_b-b\|_{p_B} & \mbox{ if } \a^t x \le \beta \\
\displaystyle  f_{>} (x,y):=\sum_{a\in A} \omega_a\|y_a-a\|_{p_A} +\sum_{a\in A} \omega_a \|x-y_a\|_{p_B} + \sum_{b\in B} \omega_b \|x-b\|_{p_B} & \mbox{ if } \a^t x > \beta. \end{array} \right. $$

It is clear that
$$
f^*= \min \{ \stackrel{({\rm SP}_{\le})}{\overbrace{\inf_{\a^t x\le \beta, \a^t y_b=\beta,  \forall b\in B} f_{\le}(x,y)}}, \stackrel{({\rm SP}_>)}{\overbrace{\inf_{\a^t x> \beta, \a^t y_a=\beta, \forall a\in  A} f_{>}(x,y)}}\}.
$$
We observe that both functions, namely $f_{\le}$ and $f_{>}$ are continuous and coercive. This implies that  $\displaystyle \inf_{\a^t x\le \beta, \a^t y_b=\beta, \forall b\in B} f_{\le}(x,y)$ is attained since the domain is closed and bounded from below. Thus a solution for this subproblem always exists. Moreover, we prove that $f_{\le}$ is strictly convex which in turn implies that the  solution of the first subproblem $({\rm SP}_\le)$  is unique.

Indeed, let $(x,y),\; (x',y')$ be two points in the domain of $f_{\le}$ and $0<\la <1$.
\begin{align*}
f_{\le}(\la x +(1-\la) x',\la y +(1-\la )y')&=& \sum_{a\in A} \omega_a\|\la x +(1-\la) x'-a\|_{p_A} \\ & +& \sum_{b\in B} \omega_b \| \la x +(1-\la) x'-\la y_b -(1-\la )y'_b\|_{p_A}  \\
&+& \sum_{b\in B} \omega_b \| \la y_b +(1-\la )y'_b-b \|_{p_B}\\
(A \mbox{ not collinear and } p_A>1) &<&  \sum_{a\in A} \omega_a( \la \|  x - a\|_{p_A} +(1-\la)\| x' - a\|_{p_A}) \\
& +& \sum_{b\in B} \omega_b (\la  \|  x - y_b \|_{p_A} + (1-\la)\|  x' -y'_b \|_{p_A})\\
& + & \sum_{b\in B} \omega_b (\la \|  y_b- b \|_{p_B} +(1-\la) \|  y'_b- b \|_{p_B})\\
& =& \la f_{\le}(x,y) +(1-\la) f_{\le}(x',y').
\end{align*}

The analysis of the second subproblem is different since  the domain is not closed. First, analogously to the above proof it follows that $f_{>}$ is strictly convex in its domain, namely $\a^t x> \beta,\; \a^t y_a=\beta,\; \forall a\in A$. Therefore, if the infimum  is attained (in the interior of ${\rm H}_B$) the solution  must be unique. Next, we will prove that if the $\inf$ of the second subproblem is not attained then it cannot be an optimal solution of Problem \eqref{p1} since there exists another point in $\a^t x\le \beta,\; \a^t y_b=\beta,\; \forall b\in B$ with a smaller objective value.

Let us assume that  no optimal solution of $({\rm SP}_>)$ exists. This implies that the infimum is attained at the boundary of ${\rm H}_B$ and therefore there exists $(\bar x, \bar y)$, $\a^t \bar x=\beta$ such that
$$ \inf_{\a^t x> \beta, \a^t y_a=\beta, \forall a} f_{>}(x,y)= f_{>}(\bar x,\bar y).$$
Next,
\begin{align}
f_{>}(\bar x,\bar y)&= \sum_{a\in A} \omega_a\|\bar y_a-a\|_{p_A} +\sum_{a\in A} \omega_a \|\bar x-\bar y_a\|_{p_B} + \sum_{b\in B} \omega_b \|\bar x-b\|_{p_B} \nonumber \\
&\ge   \sum_{a\in A} \omega_a\|\bar y_a-a\|_{p_A} +\sum_{a\in A} \omega_a \|\bar x-\bar y_a\|_{p_A} +  \sum_{b\in B} \omega_b \|\bar x-b\|_{p_B}\nonumber \\
& >  \sum_{a\in A} \omega_a\|\bar x-a\|_{p_A}+  \sum_{b\in B} \omega_b \|\bar x-b\|_{p_B}.  \tag{$\ast$}\label{in:strconv}
\end{align}
Now, let $B_1:=\{b\in B: \omega_b \| \bar x-b\|_{p_B}\ge \omega_b \|\bar x-\bar y_b\|_{p_B}+\omega_b \| \bar x -\bar y_b\|_{p_A}\}$ and $B_2=B\setminus B_1$. (Observe that $\bar y_b=\bar x$ for all $b\in B_2$.) This allows us to bound from below \eqref{in:strconv} as follows:
\begin{eqnarray*}
 \eqref{in:strconv} & \ge & \sum_{a\in A} \omega_a\|\bar x-a\|_{p_A}+  \sum_{b\in B_1} \omega_b \|\bar x-\bar y_b\|_{p_B} +\sum_{b\in B_1} \omega_b \| \bar x -\bar y_b\|_{p_A}+\sum_{b\in B_2} \omega_b \| \bar x -b\|_{p_B}  \\
 &= &  \sum_{a\in A} \omega_a\|\bar x-a\|_{p_A}+  \sum_{b\in B} \omega_b \|\bar x-\bar y_b\|_{p_B} +\sum_{b\in B_1} \omega_b \| \bar x -\bar y_b\|_{p_A}\\
&=& f_{\le}(\bar x,\bar y).
\end{eqnarray*}
Hence, $(\bar x, \bar y)$ provides a smaller objective value evaluated in $({\rm SP}_\le)$ which concludes the proof.
\end{proof}

The above description of the distances, allows us to formulate Problem \eqref{p1} as a mixed integer nonlinear  programming problem by introducing an auxiliary variable $\gamma \in \{0,1\}$ that identifies whether the new facility belongs to ${\rm H}_A$ or ${\rm \overline H}_B$.

\begin{theorem}
\label{theo1}
Problem \eqref{p1} is equivalent to the following problem:
\begin{subequations}
\label{PB}
\begin{align}
\min & \dsum_{a\in A} \omega_a Z_a + \dsum_{b\in B} \omega_b Z_b\label{c:fo}\\
\mbox{s.t. }&\; z_a - Z_a \leq M_a(1-\gamma), &\forall a\in A,\label{c:1a}\\
& w_a + u_a - Z_a \leq M_a\,\gamma, &\forall a\in A,\label{c:2a}\\
& z_b- Z_b \leq M_b\;\gamma, &\forall b\in B,\label{c:1b}\\
& w_b + u_b - Z_b \leq M_b\,(1-\gamma), &\forall b\in B,\label{c:2b}\\
&  z_a \geq \|x-a\|_{p_A}, &\forall a\in A,\label{c:3a}\\
& w_a \geq \|x-y_a\|_{p_B}, &\forall a\in A,\label{c:4a}\\
& u_a \geq \|a - y_a\|_{p_A}, &\forall a\in A,\label{c:5a}\\
&  z_b \geq \|x-b\|_{p_B},&\forall b\in B,\label{c:3b}\\
& w_b \geq \|x-y_b\|_{p_A},&\forall b\in B,\label{c:4b}\\
& u_b \geq \|b- y_b\|_{p_B},&\forall b\in B,\label{c:5b}\\
&  \a^t x - \beta \leq M (1-\gamma),\label{c:6}\\
& \a^t x - \beta \geq -M \gamma,\label{c:7}\\
&  \a^t y_a  = \beta,  &\forall a\in A, \label{c:8a}\\
&  \a^t y_b  = \beta, &\forall b\in B, \label{c:8b}\\
& Z_a, z_a, w_a, u_a \geq 0, &\forall a\in A,\label{c:9a}\\
& Z_b, z_b, w_B, u_B \geq 0, &\forall b\in B,\label{c:9b}\\
& y_a,  y_b \in \R^d,& \forall a \in A, b\in B,\label{c:9d}\\
& \gamma \in \{0,1\}.\label{c:9}
\end{align}
\end{subequations}
with  $M, M_a, M_b >0$ sufficiently large constants for all $a \in A, b\in B$. 
\end{theorem}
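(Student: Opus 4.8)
The plan is to show that the mixed-integer formulation \eqref{PB} is an exact reformulation of \eqref{p1} by exhibiting a value-preserving correspondence between feasible solutions of the two problems. Concretely, I would argue both inequalities between the optimal values, using the binary variable $\gamma$ to encode the side of $\mathcal H$ containing the new facility and the auxiliary continuous variables $z_\bullet, w_\bullet, u_\bullet, Z_\bullet$ together with the ``gate'' points $y_a, y_b$ to encode the piecewise definition of $d_{p_A,p_B}$ recalled just before the statement.

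First I would fix the meaning of the big-$M$ constraints. Constraints \eqref{c:6}--\eqref{c:7} force $\a^t x \le \beta$ when $\gamma=1$ and $\a^t x \ge \beta$ when $\gamma=0$, so $\gamma$ is (up to the measure-zero hyperplane) the indicator of $x\in {\rm H}_A$. Then \eqref{c:1a}--\eqref{c:2a} say: if $\gamma=1$ (facility in ${\rm H}_A$) then $Z_a \ge z_a \ge \|x-a\|_{p_A}$, while the companion constraint is relaxed; if $\gamma=0$ then $Z_a \ge w_a+u_a \ge \|x-y_a\|_{p_B}+\|a-y_a\|_{p_A}$ where $y_a\in\mathcal H$ by \eqref{c:8a}. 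By the explicit formula for $d_{p_A,p_B}(x,a)$ this is exactly $Z_a \ge d_{p_A,p_B}(x,a)$ once we minimize over the free gate point $y_a$ (Lemma \ref{le:le1} / Lemma \ref{le:le2} guarantee that the inner minimization over $\mathcal H$ is attained and equals the shortest-path length). The symmetric reading of \eqref{c:1b}--\eqref{c:2b}, \eqref{c:3b}--\eqref{c:5b}, \eqref{c:8b} gives $Z_b \ge d_{p_A,p_B}(x,b)$. Hence for any feasible point of \eqref{PB} the objective \eqref{c:fo} is at least $\sum_a \omega_a d_{p_A,p_B}(x,a)+\sum_b \omega_b d_{p_A,p_B}(x,b) \ge f^*$, giving one inequality.

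For the reverse inequality, given an optimal (or near-optimal) $x$ for \eqref{p1}, set $\gamma$ according to the side of $x$, choose each $y_a$ (resp.\ $y_b$) to be a minimizer of the corresponding inner problem $\min_{y\in\mathcal H}\|y-a\|_{p_A}+\|x-y\|_{p_B}$ when $x\in{\rm H}_B$ (and $y_a=x$ when $x\in{\rm H}_A$, which is feasible only on the boundary — here I would note that it suffices to take $y_a$ arbitrary on $\mathcal H$ since the corresponding constraint is then inactive), and set $z_\bullet,w_\bullet,u_\bullet$ equal to the relevant norm values and $Z_\bullet=d_{p_A,p_B}(x,\cdot)$. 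One checks all constraints hold for $M,M_a,M_b$ large enough (this is where one invokes that the demand sets are finite and bounded, so the relevant norm expressions are uniformly bounded on, say, a ball containing an optimal solution — existence of which is covered by Theorem \ref{t1} under its hypotheses, and otherwise by a standard coercivity/truncation argument). This produces a feasible point of \eqref{PB} with objective exactly $f^*$.

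The main obstacle, and the part deserving care rather than routine checking, is the boundary case $\a^t x=\beta$: there $\gamma$ is not pinned down by \eqref{c:6}--\eqref{c:7}, and one must verify that \emph{either} choice of $\gamma$ yields the correct distance value, i.e.\ that $\|x-a\|_{p_A}$ (the $\gamma=1$ reading) coincides with $\min_{y\in\mathcal H}\|y-a\|_{p_A}+\|x-y\|_{p_B}$ (the $\gamma=0$ reading) when $x\in\mathcal H$, which holds by taking $y=x$. A secondary point is to make the ``sufficiently large'' constants explicit or at least to argue they exist: one can restrict attention a priori to $x$ in a compact set (the convex hull of $A\cup B$ suffices by a standard argument, since moving toward this hull does not increase any $d_{p_A,p_B}$), and then bound every norm term appearing in \eqref{c:1a}--\eqref{c:2b} and \eqref{c:6}--\eqref{c:7} over that compact set.
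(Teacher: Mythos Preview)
Your proposal is correct and follows essentially the same approach as the paper: interpret $\gamma$ via \eqref{c:6}--\eqref{c:7} as the indicator of the halfspace containing $x$, read the big-$M$ constraints \eqref{c:1a}--\eqref{c:2b} as switching between the two branches of the piecewise definition of $d_{p_A,p_B}$, and identify $Z_a,Z_b$ with the shortest-path distances at optimality. The paper's proof is in fact briefer than yours---it only argues the case $\gamma=1$ and leaves the rest as analogous---so your explicit treatment of both inequalities, the boundary case $\a^t x=\beta$, and the existence of the big-$M$ constants is more complete than what the paper provides.
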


\begin{proof}
Let us introduce the auxiliary variable $\gamma = \left\{\begin{array}{cl}
1 & \mbox{if $x\in {\rm H}_A$,}\\
0 & \mbox{if $x \in \overline{{\rm H}}_B$,}
\end{array}\right.$ that models whether the location of the new facility $x$ is in ${\rm H}_A$ or in the closure of  ${\rm H}_B$. (Observe that if $x\in {\rm H}_A\cap \overline {\rm H}_B$, $\gamma$ can assume both values.)
Note that constraints \eqref{c:6},\eqref{c:7} and \eqref{c:9} assure the correct definition of this variable. Next, we define the auxiliary variables $Z_a$ $\forall a\in A$ and $Z_b$ $\forall b\in B$ that represent the shortest path length from the new location at $x$ to $a\in A$  and $b\in B$, respectively. Similarly, with $z_a$ and $z_b$ we shall model $\|x-a\|_{p_A}$ and $\|x-b\|_{p_B}$, respectively.

We shall prove the case $x\in {\rm H}_A$, since the case $x \in  \overline{{\rm H}}_B$ follows analogously when $\gamma=0$.
In case $x \in {\rm H}_A$ (being then $\gamma=1$), let us denote with $w_b$ the distance between $x$ and the gate point, $y_b$, of $b$ on $\mathcal{H}$, namely $w_b=\|x-y_b\|_{p_A}$; and with  $u_b$ the distance between $y_b$ and $b$,  $u_b=\|b-y_b\|_{p_B}$ for all $b\in B$ \eqref{c:8b}. Since $\gamma=1$, the minimization of the objective function and constraints \eqref{c:1a}, \eqref{c:3a}, \eqref{c:4b} and \eqref{c:5b} assure that the variables are well-defined and that:
$$
Z_a = z_a = \|x-a\|_{p_A} \quad \text{and} \quad Z_b = w_b + u_b = \|x-y_b\|_{p_A} + \|b-y_b\|_{p_B}.
$$
Hence, the minimum value of $ \dsum_{a \in A} \omega_a Z_a + \dsum_{b \in B} \omega_b Z_b$ is the overall sum of the shortest paths distances between $x$ and the points in $A \cup B$.

\end{proof}

Observe that the hyperplane $\mathcal{H}$ induces the decomposition of $\R^d$ into $\R^d = {\rm H}_A \cup {\rm H}_B$, and such that ${\rm H}_A \cap \overline {\rm H}_B = \mathcal{H}$. Moreover, using the result in Theorem \ref{t1}, Problem \ref{p1} is equivalent to solve two problems, restricting $x$ to be in ${\rm H}_A$ and in $\overline{{\rm H}}_B$.

\begin{theorem}
\label{t2}
Let $x^*\in \R^d$ be the optimal solution of \eqref{p1}. Then, $x^*$ is the solution of one of the following two problems:\\
\begin{minipage}{\linewidth}
\centering
\begin{minipage}{.45\linewidth}
\begin{align}
\min &\dsum_{a \in A} \omega_a z_a + \dsum_{b \in B} \omega_b w_b + \dsum_{b \in B} \omega_b u_b\label{prob:gralA}\tag{${\rm P}_A$}\\
\mbox{s.t. } \; & \eqref{c:3a}, \eqref{c:4b},\eqref{c:5b}, \eqref{c:8b}, \nonumber \\
& \a^tx \leq \beta,\label{HA}\\
& z_a \geq 0,\;  \forall a \in A,\nonumber\\
& w_b, u_b \geq 0,\;  \forall b \in B,\nonumber\\
& x, y_b \in \R^d.\nonumber
\end{align}
\end{minipage}
\hspace*{0.02\linewidth}
\begin{minipage}{.48\linewidth}
\begin{align}
\min &\dsum_{b\in B} \omega_b z_b + \dsum_{a \in A} \omega_a w_a + \dsum_{a \in A} \omega_a u_a \label{prob:gralB}\tag{${\rm P}_B$}\\
\mbox{s.t. } \; \; & \eqref{c:4a}, \eqref{c:5a}, \eqref{c:3b}, \eqref{c:8a}, \nonumber \\
& \a^t x   \geq \beta,\label{HB}\\
& z_b \geq 0,\;  \forall b \in B,\nonumber\\
& w_a, u_a \geq 0,\;  \forall a \in A,\nonumber\\
& x, y_a \in \R^d.\nonumber
\end{align}
\end{minipage}
\end{minipage}
\end{theorem}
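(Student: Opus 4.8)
The plan is to reduce Theorem \ref{t2} to a direct consequence of the decomposition already set up in the proof of Theorem \ref{theo1} together with the uniqueness obtained in Theorem \ref{t1}. Recall that in Theorem \ref{theo1} we showed that Problem \eqref{p1} is equivalent to the mixed-integer formulation \eqref{PB}, where the binary variable $\gamma$ records on which side of $\mathcal{H}$ the new facility lies. Fixing $\gamma=1$ forces $\a^t x \le \beta$ and collapses constraints \eqref{c:1a}--\eqref{c:9} to exactly the constraint set of \eqref{prob:gralA} (the constraints involving $w_a, u_a, z_b$ become vacuous after the objective is minimized, and the big-$M$ constraints \eqref{c:6}, \eqref{c:7} reduce to $\a^t x \le \beta$); symmetrically, fixing $\gamma=0$ gives \eqref{prob:gralB}. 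Hence $f^* = \min\{v(\text{\ref{prob:gralA}}), v(\text{\ref{prob:gralB}})\}$, where $v(\cdot)$ denotes optimal value.

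The concrete steps I would carry out are: (i) State explicitly that, by Theorem \ref{theo1}, $x^*$ (together with the associated gate points) is optimal for \eqref{PB}, so it attains either $\gamma=1$ or $\gamma=0$. (ii) In the case $\gamma=1$, substitute $\gamma=1$ into \eqref{PB} and verify, constraint by constraint, that what remains is precisely \eqref{prob:gralA}: \eqref{c:1a} and \eqref{c:3a} give $Z_a \ge \|x-a\|_{p_A}$ which at optimality is tight, so $Z_a = z_a$; \eqref{c:2b}, \eqref{c:4b}, \eqref{c:5b}, \eqref{c:8b} give $Z_b = w_b + u_b$ with $w_b \ge \|x-y_b\|_{p_A}$, $u_b \ge \|b-y_b\|_{p_B}$; the remaining variables $w_a, u_a, z_b$ and the constraints \eqref{c:2a}, \eqref{c:1b} only impose lower bounds that are inactive once $\gamma=1$, so they can be dropped; and \eqref{c:6}, \eqref{c:7} become $\a^t x \le \beta$, i.e. \eqref{HA}. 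This shows $x^*$ solves \eqref{prob:gralA}. (iii) The case $\gamma=0$ is symmetric and yields \eqref{prob:gralB}. (iv) Conversely, note that the better of the two optimal solutions of \eqref{prob:gralA} and \eqref{prob:gralB} is feasible for \eqref{PB} with the corresponding value of $\gamma$, hence $x^*$ is one of these two; combined with Theorem \ref{t1}, the relevant subproblem in fact has this $x^*$ as its \emph{unique} optimizer.

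I would also remark, to make the statement airtight, why no information is lost at the seam $\mathcal{H} = {\rm H}_A \cap \overline{{\rm H}}_B$: a facility on $\mathcal{H}$ is feasible for both subproblems and gives the same objective in each (the gate points $y_a$ or $y_b$ can be taken equal to $x$), so the ``$\min$'' of the two values genuinely equals $f^*$ and the optimal $x^*$ is recovered regardless of which subproblem it is assigned to.

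The main obstacle is not conceptual but bookkeeping: one must carefully check that, after fixing $\gamma$, \emph{every} constraint of \eqref{PB} either disappears, becomes an inactive bound on a now-irrelevant variable, or matches a constraint of the claimed subproblem — and that the objective \eqref{c:fo} reduces to \eqref{prob:gralA}'s (resp. \eqref{prob:gralB}'s) objective once the tightness of the $Z_a, Z_b$ constraints at optimality is invoked. Since all the substantive work (existence, uniqueness, the equivalence with \eqref{PB}) is already done in Theorems \ref{t1} and \ref{theo1}, this proof is essentially a verification and can be kept short.
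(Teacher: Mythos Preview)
Your proposal is correct and follows essentially the same approach as the paper: invoke Theorem~\ref{theo1} to pass to the mixed-integer formulation~\eqref{PB}, fix $\gamma\in\{0,1\}$, and verify constraint by constraint that the resulting problems are precisely \eqref{prob:gralA} and \eqref{prob:gralB}. The only minor difference is how the boundary case $x^*\in\mathcal{H}$ is handled: the paper appeals to the standing assumption $p_A\ge p_B$ (via Theorem~\ref{t1}) to conclude that the value of \eqref{prob:gralA} is no greater than that of \eqref{prob:gralB}, whereas you observe directly that with $y_b=x^*$ (resp.\ $y_a=x^*$) both subproblems attain the same objective value at $x^*$; both arguments are valid.
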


\begin{proof}
Let $x^*$ be the optimal solution of \eqref{p1}. By Theorem \ref{theo1}, $x^*$ must be the optimal solution of \eqref{c:fo}-\eqref{c:9}. Hence, we can distinguish two cases: (a) $x^* \in {\rm H}_A$; or (b) $x^*  \in \overline {\rm H}_B$. First, let us analyze case (a). Since $x^* \in {\rm H}_A$, then $\gamma^*=1$. Hence, the non-redundant constraints in \eqref{p1} are  \eqref{c:8b}, \eqref{HA}, \eqref{c:3a}, \eqref{c:4b} and \eqref{c:5b},
and the variables $Z_a$ and $Z_b$ in \eqref{p1} reduce to $z_a$ and $w_b+u_b$, respectively. The above simplification results in the formulation of Problem \eqref{prob:gralA}.

For case $(b)$, the proof follows in the same manner. The reader may note that the hyperplane $\mathcal{H}$ is considered in both problems. However, by the  proof of Theorem \ref{t1}, if $x^*$ is in $\mathcal{H}$, since we assume that $p_A \ge p_B$, the optimal value of \eqref{prob:gralA} is not greater than the optimal value of \eqref{prob:gralB} and the solution can be considered to belong to ${\rm H}_A$.

\end{proof}

From theorems \ref{t1} and \ref{t2} we get the following result.

\begin{theorem}
Let $(x^*, y^*)\in \mathbb{R}^{d\times|B|d}$ be the optimal solution of \eqref{prob:gralA} and $(\hat{x}, \hat{y})\in \mathbb{R}^{d\times|A|d}$ be the optimal solution of \eqref{prob:gralB}, with objective values $f^*$ and $\hat{f}$, respectively. If $f^* > \hat{f}$ (resp. $f^* < \hat{f}$), $ y^*_b= y^*_{b'}=x^* $, for all $b,b' \in B$ (resp. $ \hat{y}_a= \hat{y}_{a'}=\hat{x} $, for all $a,a' \in A$).
\end{theorem}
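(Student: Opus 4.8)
The plan is to read the statement as a corollary of Theorems~\ref{t1} and \ref{t2}: when \eqref{prob:gralA} is the ``losing'' restriction ($f^*>\hat f$), its optimal facility should be forced onto $\mathcal H$, and once that happens the gate points collapse onto it. I will only treat $f^*>\hat f$; the case $f^*<\hat f$ is the mirror image, exchanging the roles of $A$ and $B$, of \eqref{prob:gralA} and \eqref{prob:gralB}, and of $p_A$ and $p_B$.

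First I would pin down the overall optimiser. By Theorem~\ref{t1}, \eqref{p1} has a unique optimal solution, and by Theorem~\ref{t2} it is the optimal solution of whichever of \eqref{prob:gralA}, \eqref{prob:gralB} attains the smaller value; since $f^*>\hat f$ this is $\hat x$, and the optimal value of \eqref{p1} is $\hat f$. Moreover, the last paragraph of the proof of Theorem~\ref{t2} shows that, because $p_A\ge p_B$, if the optimum of \eqref{p1} lay on $\mathcal H$ then $f^*\le\hat f$; so the strict inequality forces $\a^t\hat x>\beta$, i.e. $\hat x$ lies in the open interior of ${\rm H}_B$.

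Next I would show $x^*\in\mathcal H$. Eliminating the gate variables $y_b$ turns the objective of \eqref{prob:gralA} into the restriction to ${\rm H}_A$ of $F(x)=\sum_{a\in A}\omega_a\, d_{p_A,p_B}(x,a)+\sum_{b\in B}\omega_b\, d_{p_A,p_B}(x,b)$. Each map $d_{p_A,p_B}(\cdot,c)$ is convex: on each halfspace it is either an $\ell_p$-norm or an infimal convolution of convex functions (the inner $\min_{y\in\mathcal H}$ defining the crossing point), and one checks that for $p_A\ge p_B$ the two pieces glue along $\mathcal H$ with a convex, not concave, kink, so $F$ is convex on $\R^d$. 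Minimising the convex $F$ over the convex set ${\rm H}_A$, the minimiser is either an unconstrained minimiser of $F$ --- which by uniqueness is $\hat x\notin{\rm H}_A$, impossible --- or a point of the boundary $\mathcal H$; hence $x^*\in\mathcal H$.

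Finally, with $x^*\in\mathcal H$, each optimal gate point $y^*_b$ minimises $\|x^*-y\|_{p_A}+\|y-b\|_{p_B}$ over $y\in\mathcal H$, and I would show this minimum equals $\|x^*-b\|_{p_B}$ and, since $1<p_A,p_B<\infty$, is attained only at $y=x^*$; this gives $y^*_b=y^*_{b'}=x^*$ for all $b,b'\in B$. The hard part is exactly this last step: taking $y=x^*$ shows the minimum is $\le\|x^*-b\|_{p_B}$, and for the reverse inequality one must rule out that sliding along $\mathcal H$ in the faster norm $\|\cdot\|_{p_A}$ can shortcut the direct $\|\cdot\|_{p_B}$-path from $x^*$ to $b$. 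In dimension two this is immediate from the triangle inequality for $\|\cdot\|_{p_B}$ (on the line $\mathcal H$ all $\ell_p$-norms coincide); in general I would verify that $y=x^*$ satisfies the first-order optimality condition of this convex subproblem, which is precisely where $p_A\ge p_B$ enters, and this --- together with the convex-kink claim used above --- is the point requiring the most careful checking.
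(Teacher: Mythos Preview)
The paper does not actually prove this theorem: it states it as a consequence of Theorems~\ref{t1} and~\ref{t2} and moves on. Your outline is the natural way to try to unpack that sentence, and you have correctly isolated the two delicate points --- the global convexity of $F$ across $\mathcal H$ and, once $x^*\in\mathcal H$, the collapse of every gate point $y^*_b$ onto $x^*$.

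Both points, however, are more fragile than you allow, and for $d\ge 3$ the second one fails outright. Once $x^*\in\mathcal H$ you need $y=x^*$ to minimise $\phi(y)=\|x^*-y\|_{p_A}+\|y-b\|_{p_B}$ over $y\in\mathcal H$. In $d=2$ your argument is fine, because on a line all $\ell_p$-norms coincide and the triangle inequality in $\|\cdot\|_{p_B}$ does the work. In $d\ge 3$ this is no longer true. Take $\mathcal H=\{x_3=0\}\subset\mathbb R^3$, $x^*=(0,0,0)$, $b=(10,10,\varepsilon)$ with $\varepsilon>0$ small, $p_A=3$, $p_B=3/2$: then
\[
\phi\big((10,10,0)\big)=\big(2\cdot 10^{3}\big)^{1/3}+\varepsilon\approx 12.6,
\qquad
\phi(x^*)=\|b\|_{3/2}\approx\big(2\cdot 10^{3/2}\big)^{2/3}\approx 15.9,
\]
so the optimal gate point is far from $x^*$ and the first-order condition you propose to ``verify'' is simply not satisfied. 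The same example shows that the objective $F$ of \eqref{p1} is not even continuous on $\mathcal H$ (the limit from $H_B$ gives the larger value $\|x^*-b\|_{p_B}$, while the value on $\mathcal H\subset H_A$ is the smaller $\min_y\phi(y)$), so the global-convexity route you use for step~2 also closes off in $d\ge 3$.

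In summary, for $d=2$ your plan can be completed and matches what the authors presumably intend; for $d\ge 3$ the conclusion $y^*_b=x^*$ does not follow by the route you sketch (nor by any obvious variant), and the result as stated appears to need either the restriction to the plane or additional hypotheses that the paper does not make explicit.
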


As we mentioned before, the important  cases where the norms used to measure distances are $\ell_{p}$-norms, $p\in \mathbb{Q},\; 1<p<+\infty$, are very important and their corresponding models simplify further. In what follow, we give explicit formulations for these problems.

\begin{theorem}
\label{socp1}
Let $\|\cdot\|_{p_i}$ be a $\ell_{p_i}$-norm with $p_i=\frac{r_i}{s_i}> 1$,  $r_i,s_i\in\N\setminus\{0\}$,  and $\gcd(r_i,s_i)=1$ for $i\in \{A, B\}$. Then, \eqref{prob:gralA}  is equivalent to
\begin{subequations}
\label{eq1}
\begin{align}
        \min\; & \dsum_{a\in A} \omega_a z_a + \dsum_{b\in B} \omega_j w_j + \dsum_{b\in B} \omega_b u_b \label{eq1:0}\\
        \mbox{s.t. } & \; \; \eqref{HA}, \eqref{c:8b},\nonumber \\
         & t_{ak}-x_k+a_{k}\ge 0,& \forall a \in A,\; k=1,...,d, \label{eq1:1}\\
         & t_{ak}+x_k-a_{k}\ge 0,& \forall a \in A,\; k=1,...,d, \label{eq1:2}\\
         & v_{bk}+x_k-y_{bk}\ge 0,&  \forall b \in B,\; k=1,...,d, \label{eq1:3}\\
         & v_{bk}-x_k+y_{bk}\ge 0,&  \forall b \in B,\; k=1,...,d, \label{eq1:4}\\
         & g_{bk}-y_{bk}+b_{k}\ge 0,&  \forall b \in B,\; k=1,...,d, \label{eq1:5}\\
         & g_{bk}+y_{bk}-b_{k}\ge 0,& \forall b \in B,\; k=1,...,d, \label{eq1:6}\\
        & t_{ak}^{r_A} \leq \xi_{ak}^{s_A} z_{a}^{r_A-s_A},& \forall a\in A,\; k=1,...,d, \label{eq1:7}\\
        & v_{bk}^{r_A} \leq \rho_{bk}^{s_A} w_{b}^{r_A-s_A},&  \forall b \in B,\; k=1,...,d, \label{eq1:8}\\
        & g_{bk}^{r_B} \leq \psi_{bk}^{s_B} u_{b}^{r_B-s_B},&  \forall b \in B,\; k=1,...,d, \label{eq1:9}\\
        & \sum_{k=1}^d \xi_{ak} \leq z_a,& \forall a \in A,\label{eq1:10}\\
        & \sum_{k=1}^d \rho_{bk} \leq w_b,&  \forall b \in B,\label{eq1:11}\\
        & \sum_{k=1}^d \psi_{bk} \leq u_b,&  \forall b \in B,\label{eq1:12}\\
        & \xi_{ak}, t_{ak}, \rho_{bk}, v_{bk}, \psi_{bk}, g_{bk} \ge 0, &  \forall a\in A, b\in B\; k=1,\ldots,d, \label{eq1:15}\\
        & z_{a}, w_{b}, u_b \ge 0, &  \forall a\in A, b\in B,\; \label{eq1:16},\\
        & x, y_b \in \R^d, &  \forall b\in B.  \label{eq1:17}
      \end{align}
      \end{subequations}
\end{theorem}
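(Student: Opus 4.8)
The plan is to read \eqref{eq1} as the problem obtained from \eqref{prob:gralA} by leaving the linear constraints $\a^t x\le\beta$ and $\a^t y_b=\beta$ untouched while replacing each of the three norm inequalities \eqref{c:3a}, \eqref{c:4b} and \eqref{c:5b} by a block of linear inequalities together with ``power-cone'' inequalities involving fresh nonnegative variables. Since the objective functions of the two problems coincide, it suffices to prove that projecting the feasible region of \eqref{eq1} onto the space of the variables $x,y,z,w,u$ returns exactly the feasible region of \eqref{prob:gralA}. This decouples into three claims of the same shape, one for each norm term.

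The analytic heart is the following statement: for fixed $x,a\in\R^d$ and a scalar $z_a\ge 0$ one has $z_a\ge\|x-a\|_{p_A}$ if and only if there exist $t_{ak},\xi_{ak}\ge 0$, $k=1,\dots,d$, satisfying \eqref{eq1:1}, \eqref{eq1:2}, \eqref{eq1:7} and \eqref{eq1:10}. For the ``if'' direction, \eqref{eq1:1}--\eqref{eq1:2} give $t_{ak}\ge|x_k-a_k|$; raising \eqref{eq1:7} to the power $1/s_A$ and using $p_A=r_A/s_A$ yields $t_{ak}^{p_A}\le \xi_{ak}\,z_a^{p_A-1}$ for each $k$, so that summing over $k$ and invoking \eqref{eq1:10},
\[
\sum_{k=1}^d |x_k-a_k|^{p_A}\le \sum_{k=1}^d t_{ak}^{p_A}\le z_a^{p_A-1}\sum_{k=1}^d\xi_{ak}\le z_a^{p_A},
\]
which is exactly $\|x-a\|_{p_A}\le z_a$ (if $z_a=0$ the right-hand sides vanish and, since $p_A>1$ forces $r_A-s_A\ge 1$, all $t_{ak}=0$). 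For the converse, set $t_{ak}=|x_k-a_k|$ and, when $z_a>0$, $\xi_{ak}=|x_k-a_k|^{p_A}/z_a^{p_A-1}$: then \eqref{eq1:7} holds with equality while $\sum_k\xi_{ak}=\|x-a\|_{p_A}^{p_A}/z_a^{p_A-1}\le z_a$ by hypothesis; when $z_a=0$ one has $x=a$ and all the new variables may be taken to be $0$.

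Exactly the same argument, applied with the triple $(w_b,v_{bk},\rho_{bk})$, the points $x$ and $y_b$ and the norm $\|\cdot\|_{p_A}$, shows that \eqref{c:4b} is equivalent to \eqref{eq1:3}, \eqref{eq1:4}, \eqref{eq1:8}, \eqref{eq1:11}; and applied with the triple $(u_b,g_{bk},\psi_{bk})$, the points $y_b$ and $b$ and the norm $\|\cdot\|_{p_B}$, shows that \eqref{c:5b} is equivalent to \eqref{eq1:5}, \eqref{eq1:6}, \eqref{eq1:9}, \eqref{eq1:12}. Combining the three equivalences, every feasible point of \eqref{prob:gralA} extends to a feasible point of \eqref{eq1} with the same objective value, and every feasible point of \eqref{eq1} restricts to a feasible point of \eqref{prob:gralA} with the same objective value; hence the two problems have the same optimal value and the $x$-parts of their optimal solutions coincide. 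To justify the SOCP terminology I would append a remark that each inequality of the form $t^{r}\le \xi^{s}z^{\,r-s}$ with $r>s$ positive integers is representable by rotated second-order cone constraints, as in \cite{BPE2014} and \cite{alizadeh-goldfarb03}.

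I expect the only real difficulty to be careful bookkeeping rather than a genuine idea: handling the degenerate configurations ($z_a=0$, some coordinates with $x_k=a_k$, or $s_A=1$ so that $p_A$ is an integer) in the construction of the auxiliary variables, and making sure the equivalence is established at the level of optimal values --- i.e.\ that minimization drives $z_a,w_b,u_b$ down to the exact norm values in both formulations --- and not merely as an equality of feasible regions.
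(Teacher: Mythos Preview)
Your proposal is correct and follows essentially the same approach as the paper's own proof: both identify that the only difference between \eqref{prob:gralA} and \eqref{eq1} lies in the three norm constraints, and both reduce the theorem to the lemma that $Z\ge\|X-Y\|_{p}$ is equivalent, via the auxiliary variables $Q_k\ge|X_k-Y_k|$ and $R_k$, to the block $Q_k^{r}\le R_k^{s}Z^{r-s}$, $\sum_k R_k\le Z$. Your algebra is in fact slightly more direct (you raise \eqref{eq1:7} to the $1/s_A$ power instead of introducing the conjugate exponent $\rho=r/(r-s)$ as the paper does), and you are more careful than the paper in explicitly treating the degenerate case $z_a=0$.
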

\begin{proof}
Note that the difference between \eqref{prob:gralA} and the formulation \eqref{eq1:0}-\eqref{eq1:17} stems in   the constraints that represent the norms [\eqref{c:3a}, \eqref{c:4b} and \eqref{c:5b}] in \eqref{prob:gralA} that are now rewritten as \eqref{eq1:1}-\eqref{eq1:12}. This equivalence follows from the observation that any constraint in the form $Z \geq \|X-Y\|_{p}$, for any $p = \frac{r}{s}$ with $r, s\in\N\setminus\{0\}$, $r>s$ and $\gcd(r,s)=1$, and $X, Y$ variables in $\R^d$, can be equivalently written as the following set of constraints:

\begin{minipage}{\textwidth}
\begin{minipage}{0.85\textwidth}
$$\hspace*{3cm}
\left.\begin{array}{ll}
Q_{k} + X_k - Y_{k}\ge 0,&\; k=1, \ldots, d, \\
Q_{k} - X_k + Y_{k}\ge 0,& \; k=1, \ldots, d,\\
Q_{k}^{r} \leq R_{k}^{s} Z^{r-s},&  k=1, \ldots, d,\\
\dsum_{k=1}^d R_{k} \leq Z,& \\
R_k \geq 0, & \forall k=1, \ldots, d.\end{array}\right\}
$$
\end{minipage}
\begin{minipage}{0.1\textwidth}
\begin{equation}\label{in:norm}
\end{equation}
\end{minipage}
\end{minipage}

Indeed, let $\rho=\frac{r}{r-s}$, then $\frac{1}{\rho}+\frac{s}{r}=1$. Let $(Z, X, Y)$ fulfills the inequality  $Z\geq \|X-Y\|_{p}$. Then we have
\begin{align}
  \|X-Y\|_{p}\leq Z &\Longleftrightarrow& \left(\sum_{k=1}^{d}|X_k-Y_k|^{\frac{r}{s}}\right)^{\frac{s}{r}}\leq Z^{\frac{s}{r}} Z^{\frac{1}{\rho}}  & \Longleftrightarrow & \left(\sum_{k=1}^{d}|  X_k-Y_k|^{\frac{r}{s}}  Z^{\frac{r}{s}(-\frac{r-s}{r})}\right)^{\frac{s}{r}}\leq   Z^{\frac{s}{r}},\nonumber \\
&\Longleftrightarrow& \sum_{k=1}^{d}| X_k-Y_k|^{\frac{r}{s}} Z^{-\frac{r-s}{s}}\leq Z.&& \label{eqnorms}
\end{align}

Then \eqref{eqnorms} holds if and only if $\exists R \in\R^d$, $R_{k}\ge 0,\; \forall k=1, \ldots, d$ such that
$$
|X_k-Y_k|^{\frac{r}{s}} Z^{-\frac{r-s}{s}}\leq R_k,\quad\mbox{ satisfying }\quad \sum_{k=1}^{d} R_{k}\leq Z, $$
or equivalently,
\begin{equation*}|X_k-Y_k|^{r}\leq R_{k}^{s} Z^{r-s},\quad \sum_{k=1}^{d} R_k \leq Z.
\end{equation*}
Set $ Q_k =|X_k - Y_k|$ and $R_k=|X_k-Y_k|^{p} Z^{-1/\rho}$. Then, clearly $(Z, X, Y, Q, R)$ satisfies \eqref{in:norm}.

Conversely, let $(Z, X, Y, Q, R)$ be a feasible solution of \eqref{in:norm}. Then, $Q_k \ge |X_k-Y_k|$ and  $R_k \ge Q_j^{(\frac{r}{s})} Z^{-\frac{r-s}{s}}\ge |X_k-Y_k|^{\tau} Z^{-\frac{r-s}{s}}$. Thus,
$$ \sum_{k=1}^d |X_k-Y_k|^{\frac{r}{s}} Z^{-\frac{r-s}{s}} \le  \sum_{k=1}^d R_k \le Z,$$
which in turns implies that $\dsum_{k=1}^d |X_k-Y_k|^{\frac{r}{s}} \le Z \, Z^{\frac{r-s}{s}}$ and hence, $\|X-Y\|_p \le Z$.

\end{proof}

\begin{remark}[Polyhedral Norms]
Note that when the  norms in ${\rm H}_A$ or ${\rm H}_B$ are polyhedral norms, a much simpler (linear) representation than the one given in Theorem \ref{socp1} is possible. Actually, it is well-known that if $\|\cdot\|$ is a polyhedral norm,  such that $B^*$, the unit ball of its dual norm, has  ${\rm Ext}(B^*)$ as set of extreme points, the constraint $Z \geq \|X-Y\|$ is equivalent to
$$
Z \geq e^t (X-Y), \; \forall e \in {\rm Ext}(B^*).
$$
\end{remark}

\begin{cor}
\label{t:teo2}
Problem \eqref{prob:gralA} (resp. \eqref{prob:gralB}) can be represented as a semidefinite  programming problem with  $|A|(2d+1) + |B|(4d+3) + 1$ (resp. $|B|(2d+1) + |A|(4d+3) + 1$)  linear constraints and at most $4d (|A| \log r_A + |B| \log r_A + |B| \log r_B)$ (resp. $4d (|B| \log r_B + |A| \log r_B + |A| \log r_A)$ positive semidefinite  constraints.
\end{cor}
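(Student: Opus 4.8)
The plan is to count, separately, the linear constraints and the positive semidefinite (PSD) constraints arising from the reformulation of Problem \eqref{prob:gralA} given in Theorem \ref{socp1}, and then to invoke the standard fact that a constraint of the form $t^{r}\le u^{s} v^{r-s}$ with $r,s\in\N$, $\gcd(r,s)=1$, $r>s$ and $t,u,v\ge 0$ can be modeled with $O(\log r)$ rotated second order cone (hence $2\times 2$ PSD) constraints; each rotated SOC constraint $p^2\le qr$, $q,r\ge 0$, is equivalent to the PSD condition $\left(\begin{smallmatrix} q & p\\ p & r\end{smallmatrix}\right)\succeq 0$. First I would write down the list of blocks of constraints in \eqref{eq1:0}--\eqref{eq1:17}: the single inequality \eqref{HA}; the $|B|$ hyperplane equalities \eqref{c:8b} (each $\a^t y_b=\beta$ counts as one linear constraint); the linearization inequalities \eqref{eq1:1}--\eqref{eq1:6}, which give $2d$ constraints per point in $A$ and $4d$ constraints per point in $B$; the aggregation inequalities \eqref{eq1:10}--\eqref{eq1:12}, one per point in $A$ and two per point in $B$; and the power constraints \eqref{eq1:7}--\eqref{eq1:9}, which are $d$ constraints per point in $A$ (of exponent data $r_A,s_A$) and $2d$ constraints per point in $B$ (one block with exponents $r_A,s_A$ and one with $r_B,s_B$). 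The nonnegativity constraints \eqref{eq1:15}--\eqref{eq1:16} are sign constraints on the variables and, in the SDP, are absorbed into the conic blocks, so they are not counted as separate linear constraints.

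Next I would tally the linear constraints. Summing the genuinely linear blocks: $1$ (from \eqref{HA}) $+\,|B|$ (from \eqref{c:8b}) $+\,(2d+1)|A|$ (from \eqref{eq1:1}--\eqref{eq1:2} and \eqref{eq1:10}) $+\,(4d+2)|B|$ (from \eqref{eq1:3}--\eqref{eq1:6} and \eqref{eq1:11}--\eqref{eq1:12}). Collecting the coefficient of $|B|$ gives $1+2+4d=4d+3$, and the coefficient of $|A|$ is $2d+1$, plus the lone constant $1$; this yields exactly $|A|(2d+1)+|B|(4d+3)+1$ linear constraints, as claimed. For \eqref{prob:gralB} the roles of $A$ and $B$ are interchanged (the side with the ``broken'' paths now being $A$), which produces $|B|(2d+1)+|A|(4d+3)+1$ by the same bookkeeping.

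Then I would handle the PSD count. Each power inequality $t_{ak}^{r_A}\le \xi_{ak}^{s_A} z_a^{r_A-s_A}$ is decomposed, by the usual dyadic trick (writing $r_A$ in binary and introducing auxiliary variables for successive squarings/geometric means), into $O(\log r_A)$ rotated second order cone constraints; being slightly generous one may bound this by $4\log r_A$ such constraints, each equivalent to one $2\times 2$ PSD constraint. There are $d$ such inequalities per $a\in A$ (contributing $4d|A|\log r_A$), $d$ per $b\in B$ of type $r_A$ (contributing $4d|B|\log r_A$), and $d$ per $b\in B$ of type $r_B$ (contributing $4d|B|\log r_B$); the total is at most $4d\bigl(|A|\log r_A+|B|\log r_A+|B|\log r_B\bigr)$, which is the stated bound. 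For \eqref{prob:gralB} swap $A\leftrightarrow B$ and $r_A\leftrightarrow r_B$ appropriately to get $4d\bigl(|B|\log r_B+|A|\log r_B+|A|\log r_A\bigr)$.

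The main obstacle is not conceptual but a matter of being careful with the bookkeeping: one must be explicit about which inequalities in \eqref{eq1:0}--\eqref{eq1:17} are treated as linear constraints versus which are the conic (power) constraints, and one must fix a concrete $O(\log r)$ decomposition scheme for $t^{r}\le u^{s}v^{r-s}$ and verify that the chosen scheme respects the claimed constant $4$ in front of $\log r$ (different textbook decompositions give slightly different constants, so a uniform generous bound is what makes the ``at most'' statement clean). Once the decomposition scheme is pinned down and the nonnegativity constraints are correctly assigned to the conic blocks rather than to the linear count, the corollary follows immediately from Theorem \ref{socp1}.
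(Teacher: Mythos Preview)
Your proposal is correct and follows essentially the same approach as the paper: count the linear constraints in the reformulation of Theorem~\ref{socp1} block by block, then decompose each power inequality $t^{r}\le u^{s}v^{r-s}$ into $O(\log r)$ rotated--cone constraints and rewrite each of those as a small PSD constraint. The only cosmetic difference is that the paper cites \cite[Lemma~3]{BPE2014} for the $2\log r$ bound per power constraint and uses a $3\times 3$ Schur--complement LMI for $X^{2}\le YZ$, whereas you use the equivalent $2\times 2$ block and a generic constant $4$ in front of $\log r$; the resulting counts are identical.
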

\begin{proof}
By Theorem \ref{socp1}, Problem \eqref{prob:gralA}  is equivalent to Problem \eqref{eq1}. Then, using \cite[Lemma 3]{BPE2014}, we represent each one of the nonlinear inequalities, as a system of at most $2\log r_A$ or $2\log r_B$ inequalities of the form $X^2\le YZ$, involving 3 variables, $X, Y, Z$ with $Y, Z$ non negative. Hence, by Schur complement, it follows that
\begin{equation}\label{eq:schur}
X^2\le YZ \quad \Leftrightarrow \left(\begin{array}{ccc} Y+Z  & 0 & 2X \\ 0 & Y+Z & Y-Z \\ 2X & Y-Z & Y+Z \end{array} \right) \succeq 0, \; Y+Z\ge 0.
\end{equation}
Hence, Problem \eqref{prob:gralA} is a semidefinite programming problem because it has a linear objective function, $|A|(2d+1) + |B|(4d+3) + 1$ linear inequalities and at most $4d (|A| \log r_A + |B| \log r_A + |B| \log r_B)$ linear matrix inequalities.

\end{proof}
The reader may note that by similar arguments and since the left-hand representation of \eqref{eq:schur} is a second order cone constraint, Problem \eqref{prob:gralA} can also be seen as a second order cone program.

The following example illustrates this model with the 18-points data set from Parlar \cite{parlar}.

\begin{ex}
\label{ex1}
Let $\mathcal{H}=\{x \in \R^d: 1.5x - y = 0\}$ and consider the set of $18$-demand points in \cite{parlar}. We consider that  the distance measure in  ${\rm H}_A$ is the $\ell_2$-norm while in ${\rm H}_B$ is the $\ell_3$-norm. The solution of Problem \eqref{p1} is $x^* = (9.23792, 6.435661)$ with objective value $f^*=103.934734$.

Fig. \ref{fig1} shows the demand points $A$  and $B$, the hyperplane $\mathcal{H}$, the solution $x^*$,  as well as the shortest paths between $x^*$ and the points in $A$ and $B$.

\begin{center}
\begin{minipage}{0.7\linewidth}
\begin{figure}[H]
\begin{center}
\begin{tikzpicture}[scale=0.8]
\draw (0,0) -- (9.4*0.45,9.4*0.45*1.5);

\draw  (9.237920*0.45,6.435661*0.45) -- (5.00*0.45,5.00*0.45);
\draw  (9.237920*0.45,6.435661*0.45) -- (6.00*0.45,1.00*0.45);
\draw  (9.237920*0.45,6.435661*0.45) -- (7.00*0.45,4.00*0.45);
\draw  (9.237920*0.45,6.435661*0.45) -- (8.00*0.45,8.00*0.45);
\draw  (9.237920*0.45,6.435661*0.45) -- (9.00*0.45,1.00*0.45);
\draw  (9.237920*0.45,6.435661*0.45) -- (9.00*0.45,5.00*0.45);
\draw  (9.237920*0.45,6.435661*0.45) -- (9.00*0.45,10.00*0.45);
\draw  (9.237920*0.45,6.435661*0.45) -- (10.00*0.45,12.00*0.45);
\draw  (9.237920*0.45,6.435661*0.45) -- (14.00*0.45,2.00*0.45);
\draw  (9.237920*0.45,6.435661*0.45) -- (14.00*0.45,4.00*0.45);
\draw  (9.237920*0.45,6.435661*0.45) -- (16.00*0.45,8.00*0.45);
\draw  (9.237920*0.45,6.435661*0.45) -- (17.00*0.45,4.00*0.45);
\draw  (9.237920*0.45,6.435661*0.45) -- (17.00*0.45,10.00*0.45);
\draw  (9.237920*0.45,6.435661*0.45) -- (19.00*0.45,13.00*0.45);

\draw  (9.237920*0.45,6.435661*0.45) -- (1.39*0.45,2.09*0.45);
\draw  (9.237920*0.45,6.435661*0.45) -- (5.12*0.45,7.69*0.45);
\draw  (9.237920*0.45,6.435661*0.45) -- (6.19*0.45,9.29*0.45);
\draw  (9.237920*0.45,6.435661*0.45) -- (6.64*0.45,9.96*0.45);
\draw  (1.394161*0.45,2.091242*0.45) -- (1.00*0.45,2.00*0.45);
\draw  (5.124579*0.45,7.686868*0.45) -- (2.00*0.45,8.00*0.45);
\draw  (6.190651*0.45,9.285976*0.45) -- (3.00*0.45,12.00*0.45);
\draw  (6.636882*0.45,9.955323*0.45) -- (6.00*0.45,11.00*0.45);

\begin{axis}[
    anchor=origin,  
    x=0.45cm, y=0.45cm,   
    ]

\addplot[
scatter,
only marks,
point meta=explicit symbolic,
scatter/classes={
a={mark=*},%
b={mark=*},
z={mark=*,draw=black,fill=white},
w={mark=triangle*,black}
},
]
table[meta=label] {
x   y   label
1.00	 2.00 	 a
2.00	 8.00 	 a
3.00	 12.00 	 a
6.00	 11.00 	 a
5.00	 5.00 	 b
6.00	 1.00 	 b
7.00	 4.00 	 b
8.00	 8.00 	 b
9.00	 1.00 	 b
9.00	 5.00 	 b
9.00	 10.00 	 b
10.00	 12.00 	 b
14.00	 2.00 	 b
14.00	 4.00 	 b
16.00	 8.00 	 b
17.00	 4.00 	 b
17.00	 10.00 	 b
19.00	 13.00 	 b
9.237920	 6.435661 	 w
1.39	 2.09 	 z
5.12	 7.69 	 z
6.19	 9.29 	 z
6.64	 9.96 	 z
};

\end{axis}

\fill (9.237920*0.45,6.435661*0.45) circle (3.5pt);
\draw(1*0.45, 7*0.45) node {${\rm H}_A$};
\draw(16*0.45, 1*0.45) node {${\rm H}_B$};
\draw(9.237920*0.45 +0.3 ,6.435661*0.45 - 0.55) node {$x^*$};
\end{tikzpicture}
\caption{Demand points and optimal solution of Example \ref{ex1}.\label{fig1}}
\end{center}
\end{figure}
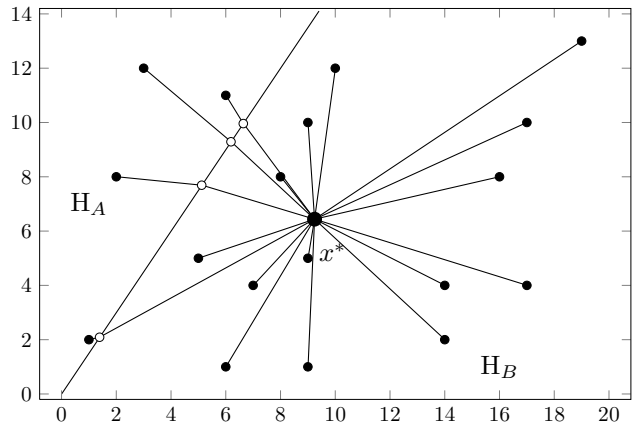
\end{minipage}
\end{center}
\end{ex}

Finally, to conclude this section we address  the restricted case of Problem \eqref{p1}. Let $\{g_{1}, \ldots, g_l\} \subset \mathbb{R}[X]$ be real polynomials and $\mathbf{K}:=\{x\in \mathbb{R}^{d}: g_{j}(x)\geq 0,\: j=1,\ldots ,l \}$ a basic
closed, compact semialgebraic set with nonempty interior satisfying  that for some $M>0$ the quadratic polynomial $u(x)=M-\sum_{k=1}^d x_k^2$ has a representation on $\mathbf{K}$ as $u\,=\,\sigma _{0}+\sum_{j=1}^{\ell}\sigma _{j}\,g_{j}$, for some  $\{\sigma_0, \ldots, \sigma_l\}\subset \mathbb{R}[X]$ being each $\sigma_j$ sum of squares  (Archimedean property \cite{lasserrebook}). We remark that the assumption on the Archimedean property is not restrictive at all, since any semialgebraic  set $\mathbf{K}\subseteq \R^d$ for which it is known that $\sum_{k=1}^d x_k^2 \leq M$ holds for some $M>0$ and for all $x\in \mathbf{K}$, admits a new representation $\mathbf{K'} = \mathbf{K} \cup \{x\in \mathbb{R}^{d}: g_{l+1}(x):=M-\sum_{k=1}^d x_k^2\geq 0\}$ that trivially verifies  the Archimedean property.

For the sake of simplicity,  we assume that the domain $\mathbf{K}$ is  compact and has nonempty interior, as it is usual in Location Analysis. We observe that we can extend the  results in Section \ref{s:locpro} to a broader class of convex constrained problems.

\begin{theorem} \label{t:convex}
Let $\mathbf{K}:=\{x\in \mathbb{R}^{d}: g_{j}(x)\geq 0,\: j=1,\ldots ,l \}$ be a basic
closed, compact semialgebraic set with nonempty interior, and consider the restricted problem:
\begin{equation}\label{pb2}
\dmin_{x\in\mathbf{K}}\sum_{a\in A} \omega_a \; d(x,a)+\sum_{b\in B} \omega_b \;d(x,b).
\end{equation}
Assume that $\mathbf{K}$ satisfies the Archimedean property and further that any of the following conditions hold:
\begin{enumerate}
\item $g_i(x)$ are concave for $i=1,\ldots, l$ and $-\sum_{i=1}^{ l} \mu_i \nabla^2 g_i(x) \succ 0$ for each dual pair $(x,\mu)$ of the problem of minimizing any linear functional $c^tx$ on $\mathbf{K}$ (\textit{Positive Definite Lagrange Hessian} {\rm (PDLH)}).
\item $g_i(x)$ are sos-concave on $\mathbf{K}$ for $i=1,\ldots, l$ or $g_i(x)$ are concave on $\mathbf{K}$ and strictly concave on the boundary of $\mathbf{K}$ where they vanish, i.e. $\partial \mathbf{K}\cap \partial \{x\in \mathbb{R}^d: g_i(x)=0\}$, for all $i=1,\dots, l$.
\item $g_i(x)$ are strictly quasi-concave on $\mathbf{K}$ for $i=1,\ldots, l$.
\end{enumerate}
Then, there exists a constructive  finite dimension embedding, which only depends on  $p_A$, $p_B$ and $g_i$, $i=1,\ldots, l$, such that the solution of \eqref{pb2} can be obtained by solving two semidefinite programming problems.
\end{theorem}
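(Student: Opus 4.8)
The plan is to combine the SOCP reformulation of the unconstrained problem (Theorem~\ref{socp1}) with the moment-SOS machinery of Lasserre for convex semialgebraic optimization. First I would, exactly as in the proof of Theorem~\ref{t2}, split Problem~\eqref{pb2} into the two restricted subproblems obtained by additionally imposing $\a^t x\le\beta$ (giving a problem $({\rm P}_A^{\mathbf K})$) and $\a^t x\ge\beta$ (giving $({\rm P}_B^{\mathbf K})$), and argue that the global minimizer over $\mathbf K$ is the better of the two. For each subproblem I would then apply the construction of Theorem~\ref{socp1} to linearize the $\ell_{p_A}$- and $\ell_{p_B}$-norm constraints, introducing the auxiliary variables $z,w,u,t,v,g,\xi,\rho,\psi$ and the polynomial inequalities \eqref{eq1:1}--\eqref{eq1:12}; crucially, every one of these is a \emph{polynomial} (in fact a bivariate/trivariate product-type) inequality, and $(x, y_b)$ still ranges over the semialgebraic set $\mathbf K \times \mathcal H^{|B|}$ intersected with the halfspace. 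So each subproblem is a polynomial optimization problem over a compact basic semialgebraic set.

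Next I would observe that the enlarged feasible set is still defined by the original generators $g_i$ together with finitely many \emph{explicit} polynomial inequalities coming from the norm representation and from the (linear) constraints $\a^tx\le\beta$, $\a^ty_b=\beta$, $z,w,u,\dots\ge 0$. The linear and the product-type constraints $Q_k^{r}\le R_k^{s}Z^{r-s}$ (equivalently, via \cite[Lemma~3]{BPE2014} and Schur complement \eqref{eq:schur}, a bunch of rotated second-order cone/PSD constraints $X^2\le YZ$) are all convex, and in particular their defining polynomials are either affine — hence concave — or, after the $X^2\le YZ$ rewriting, have the form $X^2-YZ\le 0$, whose negation $YZ-X^2$ is sos-concave (its Hessian is a constant matrix, namely $-\left(\begin{smallmatrix}2&0&0\\0&0&-1\\0&-1&0\end{smallmatrix}\right)$, which is negative semidefinite, and a quadratic with PSD-negative Hessian is sos-concave). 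Therefore, whenever the original $g_i$ satisfy one of hypotheses (1)--(3), the full list of generators of the lifted feasible set of each subproblem satisfies the \emph{same} hypothesis (concavity/PDLH, sos-concavity or strict quasi-concavity is preserved under adding affine and sos-concave generators, and the Archimedean property is inherited since the lifted set is still compact — the auxiliary variables are bounded once $x,y_b$ are, by the norm inequalities — and we may always append a ball constraint as noted before the theorem). Hence each subproblem is a convex polynomial optimization problem of the type for which Lasserre's hierarchy is known to have \emph{finite convergence}: under (1) this is the PDLH result of de~Klerk--Laurent, under (2)/(3) it is the result of Lasserre/Helton--Nie on sos-convex and strictly-convex representations. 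Consequently the first-order (or a fixed finite-order, depending only on the degrees of the $g_i$, $r_A$, $r_B$) Lasserre relaxation of each subproblem is exact, and that relaxation is a semidefinite program whose size depends only on $d$, $p_A$, $p_B$ and the $g_i$. Taking the better of the two SDP optimal values yields the optimum of \eqref{pb2}, proving the claim.

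The main obstacle, and the step that needs the most care, is verifying that the \emph{exactness} results for Lasserre's hierarchy genuinely apply to the lifted problem rather than just to the original $\mathbf K$. Two points must be checked: first, that adding the extra generators does not destroy the regularity hypothesis invoked (for (1), one must confirm the Lagrange Hessian of the lifted problem stays positive definite on the relevant subspace — this uses that the added generators are affine, whose Hessians vanish, or sos-concave with the right sign, so they can only help; for (2) and (3) one must confirm sos-concavity/strict quasi-concavity of the product-type generators, which is exactly the constant-Hessian computation sketched above but must be done for the actual inequalities $Q_k^r\le R_k^s Z^{r-s}$ after the dyadic splitting into $X^2\le YZ$ pieces); and second, that the objective, which is linear in the lifted variables $(z,w,u)$, causes no trouble — it does not, since linear objectives are always sos-convex. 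A secondary, more bookkeeping-type obstacle is making the phrase ``depends only on $p_A,p_B,g_i$'' precise: one has to track that the relaxation order needed is $\max$ over the finite list of degrees appearing after the BPE-style linearization, which is bounded by a function of $\log r_A,\log r_B$ and $\deg g_i$, independent of $|A|,|B|$ and of the data $a,b,\omega,\a,\beta$. I would state this dependence explicitly but leave the degree count as a remark, since it is routine given Corollary~\ref{t:teo2}.
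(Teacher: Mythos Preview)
Your overall plan---split into two halfspace subproblems, lift via the norm representation of Theorem~\ref{socp1}, then argue the lifted problem is an exact SDP---is the right shape, but there is a concrete error and a strategic detour that together make the argument as written fail.

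The concrete error is the Hessian computation for $f(X,Y,Z)=YZ-X^2$. Its Hessian is
\[
\nabla^2 f=\begin{pmatrix}-2&0&0\\0&0&1\\0&1&0\end{pmatrix},
\]
which has eigenvalues $-2,1,-1$ and is therefore \emph{indefinite}, not negative semidefinite. Hence $YZ-X^2$ is neither concave nor sos-concave, and the product-type generators you introduce do \emph{not} inherit hypothesis (2) (nor (1), since their Hessians do not vanish). Your claim that ``adding affine and sos-concave generators'' preserves the regularity hypothesis collapses at exactly this point, and with it the appeal to finite convergence of the Lasserre hierarchy on the lifted feasible set.

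The paper avoids this difficulty by a cleaner route. It does not try to push hypotheses (1)--(3) through to the lifted variables at all. Instead it observes that the norm part of the problem is \emph{already} an exact SDP in the lifted variables (Theorem~\ref{t2} plus Corollary~\ref{t:teo2}: the rotated-cone constraints $X^2\le YZ$ are LMI-representable via the Schur complement \eqref{eq:schur}, no relaxation needed), so the only remaining question is whether the additional constraint $x\in\mathbf K$ is semidefinite representable in the \emph{original} variables $x$. Conditions (1)--(3) are precisely the Helton--Nie sufficient conditions for $\mathbf K$ to be SDr, and the paper simply cites \cite[Theorem~8]{BPE2014} for this. Intersecting the two SDr descriptions (one for the norm block, one for $\mathbf K$) and minimizing a linear objective yields two SDPs, as claimed. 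In short: you went through the moment--SOS hierarchy where the paper uses direct SDr; the latter sidesteps the need to verify any convexity for the auxiliary generators, which is exactly where your argument breaks.
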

\begin{proof}
The unconstrained version of Problem \eqref{pb2} can be equivalently written as two SDP problems using the result in Theorem \ref{t2} and Corollary \ref{t:teo2}. Therefore, it remains to prove that under the conditions 1, 2 or 3 the constraint set $x\in \mathbf{K}$ is also exactly represented as a finite number of semidefinite constraints or equivalently that it is semidefinite representable (SDr). The discussion that the three above mentioned cases are SDr is similar to that in \cite[Theorem 8]{BPE2014} and thus it is omitted here.

\end{proof}

\section{Location problems in two media divided by a hyperplane endowed with a different norm}
\label{s:4}

In this section we consider an extension of the location problem in the previous section where the separating  hyperplane is endowed with a third norm, namely $\|\cdot\|_{p_H}$, and it may be used to travel in shortest paths crossing it. Thus, the new problem consists of locating a  new facility to minimize the weighted sum of the distances to the demand points, but where, if it is convenient, a shortest path from the facility to a demand point that crosses the hyperplane may travel through it. This way the hyperplane can be seen as a rapid transit boundary for displacements between different media.

We define the shortest path distance between two points $a$ and $b$ in $\R^d$ by


\begin{equation}
\label{dt}\tag{DT}
d_t(a,b) = \left\{\begin{array}{cl}
\|a-b\|_{p_i} & \mbox{if $a, b \in {\rm H}_i,\; i\in \{A,B\}$},\\
\dmin_{x, y \in \mathcal{H}} \|x-a\|_{p_A} + \|x-y\|_{p_H} + \|y-b\|_{p_B} & \mbox{if $a\in {\rm H}_A$, $b\in \overline{{\rm H}}_B$, }
\end{array}\right.
\end{equation}
and $x, y$ represent the access and the exit (gate) points where the shortest path from $a$ to $b$ crosses through the hyperplane.

As in Section \ref{s:2} we can also give a general result about the optimal gate points of the shortest weighted path between points in this framework. In this case we must resort to subdifferential calculus to avoid nondifferentiability situations due to the possible coincidence of $\x$ and $\y$. Let us denote by $\partial_x f(\x,\y)$ (resp. $\partial_y f(\x,\y)$) the subdifferential set of the function $f$ as a function of its first (resp. second) set of variables, i.e. $\x$ is fixed (resp. $\y$ is fixed), at $\x$ (resp. $\y$).

\begin{lem} \label{le:snell2-sub}
The distance $d_{t}(a,b)$ of the shortest weighted path between $a$ and $b$ is
$$ \omega_a \|\x -a\|_{p_A} +\omega_H \|\x-\y\|_{p_H} + \omega_b \|\y -b\|_{p_B},$$
where $\x =(\x_1,\ldots,\x_d)^t$, and $\y = (\y_1, \ldots, \y_d)^t$, $\a^t\x=\beta$, $\a^t\y=\beta$ must satisfy:
\begin{eqnarray*}
\lambda_a \a \in \omega_a \partial_x  \|\x-a\|_{p_A}+\omega_H \partial_x   \|\x-\y\|_{p_H}, \; \mbox{ for some } \lambda_a \in \mathbb{R},\\
\lambda_b \a \in \omega_b \partial_y  \|\y-a\|_{p_B}+\omega_H \partial_y   \|\x-\y\|_{p_H}, \; \mbox{ for some } \lambda_b \in \mathbb{R}.
\end{eqnarray*}
\end{lem}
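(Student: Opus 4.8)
The plan is to mimic the argument of Lemmas \ref{le:le1} and \ref{le:le2}, treating $d_t(a,b)$ in the case $a\in {\rm H}_A$, $b\in \overline{\rm H}_B$ as the value of a convex optimization problem and writing down its optimality conditions. Concretely, $d_t(a,b)$ equals
\[
\min_{\substack{\a^t x=\beta\\ \a^t y=\beta}} \omega_a\|x-a\|_{p_A}+\omega_H\|x-y\|_{p_H}+\omega_b\|y-b\|_{p_B},
\]
which is the minimization of a convex (indeed, finite everywhere) objective over an affine subspace; by coercivity the infimum is attained. First I would form the Lagrangian $L(x,y,\lambda_a,\lambda_b)=\omega_a\|x-a\|_{p_A}+\omega_H\|x-y\|_{p_H}+\omega_b\|y-b\|_{p_B}+\lambda_a(\a^tx-\beta)+\lambda_b(\a^ty-\beta)$ and invoke the fact that for an unconstrained convex nonsmooth function the optimality condition is $0\in\partial(\,\cdot\,)$, together with the Moreau--Rockafellar sum rule (all terms are finite-valued, so no constraint qualification is needed) to split the subdifferential over the summands.

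The key steps, in order, are: (i) verify that the two equality constraints can be handled by two independent multipliers $\lambda_a,\lambda_b$, since the constraint on $x$ involves only $x$ and the one on $y$ only $y$; (ii) take the subdifferential of $L$ with respect to the block of variables $x$, holding $y$ fixed, obtaining $0\in\omega_a\partial_x\|x-a\|_{p_A}+\omega_H\partial_x\|x-y\|_{p_H}+\lambda_a\a$, i.e. $\lambda_a\a\in -\omega_a\partial_x\|x-a\|_{p_A}-\omega_H\partial_x\|x-y\|_{p_H}$; absorbing the sign into $\lambda_a$ (and noting $\partial(-g)=-\partial g$ is irrelevant here since we may rename the multiplier) yields the stated inclusion $\lambda_a\a\in\omega_a\partial_x\|\x-a\|_{p_A}+\omega_H\partial_x\|\x-\y\|_{p_H}$; (iii) do the same with respect to $y$, holding $x$ fixed, to get the second inclusion; (iv) observe that since the objective is jointly convex and the feasible set affine, these KKT-type conditions are not merely necessary but also sufficient, so the pair $(\x,\y)$ satisfying them is exactly the optimal gate pair and the displayed expression for $d_t(a,b)$ follows.

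The one genuine subtlety — and the place where Lemma \ref{le:le2}'s hand-waving has to be made honest — is that $\|x-y\|_{p_H}$ is nonsmooth precisely when $x=y$, which is exactly the configuration that occurs when it is optimal \emph{not} to use the hyperplane as a transit medium; this is why the statement is phrased with subdifferentials rather than gradients. I would handle this by citing the Moreau--Rockafellar theorem in the form that $\partial(f_1+f_2)=\partial f_1+\partial f_2$ whenever $f_1,f_2$ are convex and finite on all of $\R^d$ (which holds for all $\ell_p$-norms here), so that the block-wise subdifferential of the sum genuinely decomposes as written even at the degenerate point $\x=\y$. A secondary routine point is that one should reduce to the case $a\in{\rm H}_A$, $b\in\overline{\rm H}_B$ since otherwise $d_t$ is just a single norm and there is nothing to prove. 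The main obstacle is thus not any hard estimate but the care needed to justify the sum rule and the block decomposition at points of nondifferentiability; everything else is the same Lagrangian bookkeeping already used in Lemma \ref{le:le1}.
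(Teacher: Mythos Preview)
Your proposal is correct and follows exactly the approach the paper intends: the paper does not write out a proof for this lemma but explicitly signals that it is obtained ``as in Section~\ref{s:2}'' by the same Lagrangian/KKT argument used for Lemmas~\ref{le:le1} and~\ref{le:le2}, now with subdifferentials because of the possible degeneracy $\x=\y$. Your write-up faithfully reconstructs that argument, and your attention to the Moreau--Rockafellar sum rule at the nonsmooth point is precisely the care the paper alludes to when it says one ``must resort to subdifferential calculus to avoid nondifferentiability situations due to the possible coincidence of $\x$ and $\y$.''
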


Now, we consider again  the embedding defined in Section \ref{s:2}: $x\in \mathbb{R}^d\rightarrow  (x,\a^tx-\beta) \in \mathbb{R}^{d+1}$. Denote by $\gamma_a$ the angle between the vectors $(a-\x,0)$ and $(a-\x,\a^ta-\beta)$ and by $\gamma_b$ the angle between $(b-\y, 0)$ and $(a-\y, \a^t b -\beta)$. Then, we can interpret $\frac{|\a^ta-\beta|}{\|a-\x\|_{p_A}}$ and $\frac{|\a^tb-\beta|}{\|b-\y\|_{p_B}}$ as generalized sines of the angles $\gamma_a$ and $\gamma_b$, respectively (see Fig. \ref{genanglesH}). The reader may again note that in general these ratios are not trigonometric functions, unless $p_A=p_B=2$.
We define the generalized sines as:
$$ \sin_{p_A}\gamma_a=\frac{|\a^ta-\beta|}{\|\x-a\|_{p_A}} \quad \mbox{and} \quad \sin_{p_B}\gamma_b=\frac{|\a^tb-\beta|}{\|\y-b\|_{p_B}}.$$

These expressions can be written by components as:
\begin{equation*}
\sin_{p_A}\gamma_a=\left|\sum_{j=1}^d \frac{\a_j a_j-\a_j \x_j}{\|a-\x\|_{p_A}}\right|, \quad
\sin_{p_B}\gamma_b=\left|\sum_{j=1}^d \frac{\a_j b_j-\a_j \y_j}{\|b-\y\|_{p_B}}\right|.
\end{equation*}

Finally, by similarity we shall denote the non-negative value of each component in the previous sums as
$$ \sin_{p_A}\gamma_{a_j}:=\frac{|\a_j a_j-\a_j\x_j|}{\|a-\x\|_{p_A}} \quad \mbox{and} \quad \sin_{p_B}\gamma_{b_j}:=\frac{|\a_j b_j-\a_j\y_j|}{\|b-\y\|_{p_B}} \; j=1,\ldots,d.$$

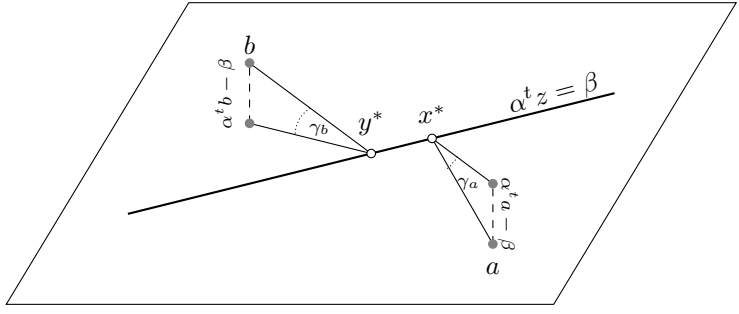
\begin{figure}[h]
\begin{center}
\begin{tikzpicture}[scale=0.8]
\draw (0,-1) -- (3,4) -- (12,4) -- (9,-1) -- cycle;
\draw[thick] (2,0.5) -- (10, 2.5);

    \node[] at (6,2)  {$y^*$};
    \node[] at (7,2.15)  {$x^*$};
\draw[dashed] (4,2) -- (4,3);
\node[rotate=90] at (3.6, 2.5) {\scriptsize $\a^t b - \beta$};

\node[] at (4,3.3)  {$b$};
\draw (4,3) -- (6,1.5);
\draw (4,2) -- (6,1.5);
\node[] at (5.15,1.9)  {\scriptsize$\gamma_b$};
\draw[densely dotted] (5,2.25) arc (120: 180: 0.5);

\draw (8,0) -- (7,1.75);
\draw (8,1) -- (7,1.75);
\draw[dashed] (8,0)--(8,1);
\node[rotate=270] at (8.25, 0.5) {\scriptsize $\a^t a - \beta$};
\node[] at (8,-0.4)  {$a$};
\node[] at (7.6,1.0)  {\scriptsize $\gamma_a$};
\draw[densely dotted] (7.25,1.25) arc (-50: -30: 0.75);

\filldraw [color=black, fill=white] (6,1.5) circle (2pt);
\filldraw [color=black, fill=white] (7,1.75) circle (2pt);
\filldraw [gray] (4,2) circle (2pt);
\filldraw [gray] (4,3) circle (2pt);
\filldraw [gray] (8,0) circle (2pt);
\filldraw [gray] (8,1) circle (2pt);
\node[rotate=15] at (9,2.5)  {$\a^t z = \beta$};
\end{tikzpicture}
\end{center}
\caption{Illustrative example of the generalized sines when traversing $\mathcal{H}$.\label{genanglesH}}
\end{figure}

With the above notation, we state the following results derived from Lemma \ref{le:snell2-sub}.
\begin{cor}[Snell's-like result]
Assume that $\|\cdot \|_{p_A},\; \|\cdot \|_{p_B},\; \|\cdot \|_{p_H}$ are $\ell_p$-norms with $1<p<+\infty$. Let $\x, \y \in \R^d$, $\a^t\x=\a^t \y = \beta$. Then, $\x$ and $\y$ define the  shortest weighted path between $a$ and $b$ when traversing the hyperplane is allowed if and only if the following conditions are satisfied:

\begin{enumerate}
\item For all $j$ such that $\a_j=0$:
$$
\omega_a\left[ \frac{|\x_j-a_j|}{\|\x-a\|_{p_A}}\right]^{p_A-1} \sign(\x_j-a_j)+
\omega_H \left[ \frac{|\x_j-\y_j|}{\|\x -\y\|_{p_H}}\right]^{p_H-1} \sign(\x_j-\y_j) =0,
$$
$$
\omega_b\left[ \frac{|\y_j-b_j|}{\|\y-b\|_{p_B}}\right]^{p_B-1} \sign(\y_j-b_j)-
\omega_H\left[ \frac{|\x_j-\y_j|}{\|\x -\y\|_{p_H}}\right]^{p_H-1} \sign(\x_j-\y_j) =0.
$$
\item For all $i, j$, such that $\a_i\a_j\neq 0$:
\begin{eqnarray*}
\omega_a\left[  \frac{\sin \gamma_{a_i}}{|\a_i|}\right]^{p_A-1} \frac{\sign(\x_i-a_i)}{\a_i}+
\omega_H\left[ \frac{|\x_i-\y_i|}{\|\x -\y\|_{p_H}}\right]^{p_H-1}  \frac{\sign(\x_i-\y_i)} {\a_i}=\\
\omega_a\left[  \frac{\sin \gamma_{a_j}}{|\a_j|}\right]^{p_A-1} \frac{\sign(\x_j-a_j)}{ \a_j }+
\omega_H\left[ \frac{|\x_j-\y_j|}{\|\x -\y\|_{p_H}}\right]^{p_H-1}  \frac{\sign(\x_j-\y_j)}{\a_j},
\end{eqnarray*} and
\begin{eqnarray*}
\omega_a\left[ \frac{\sin \gamma_{b_i}}{|\a_i|}\right]^{p_B-1} \frac{\sign(\y_i-b_i)}{\a_i}-
\omega_H\left[ \frac{|\x_i-\y_i|}{\|\x -\y\|_{p_H}}\right]^{p_H-1}  \frac{\sign(\x_i-\y_i)} {\a_i}=\\
\omega_a\left[ \frac{\sin \gamma_{b_j}}{|\a_j|}\right]^{p_B-1} \frac{\sign(\y_j-b_j)}{ \a_j}-
\omega_H\left[ \frac{|\x_j-\y_j|}{\|\x -\y\|_{p_H}}\right]^{p_H-1}  \frac{\sign(\x_j-\y_j)} {\a_j}.
\end{eqnarray*}
\end{enumerate}

\end{cor}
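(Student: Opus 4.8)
The plan is to derive the stated conditions directly from Lemma \ref{le:snell2-sub} by unpacking the two subdifferential inclusions into scalar equations, exactly as was done to pass from Lemma \ref{le:le1} to the first Snell's-like corollary in Section \ref{s:2}. Since all three norms are $\ell_p$-norms with $1<p<+\infty$, each of the functions $x\mapsto\|x-a\|_{p_A}$, $(x,y)\mapsto\|x-y\|_{p_H}$ and $y\mapsto\|y-b\|_{p_B}$ is differentiable away from the origin, so the subdifferentials are singletons whenever the relevant argument is nonzero; the only reason for phrasing Lemma \ref{le:snell2-sub} with $\partial$ is the possible coincidence $\x=\y$, and the corollary implicitly works in the generic regime where that does not happen (or where the stated gradient expressions are interpreted in the usual componentwise sense). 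I would first record the partial gradients componentwise: for $j=1,\dots,d$,
\[
\bigl[\nabla_x\|\x-a\|_{p_A}\bigr]_j=\left[\frac{|\x_j-a_j|}{\|\x-a\|_{p_A}}\right]^{p_A-1}\sign(\x_j-a_j),
\]
and similarly for the $p_H$-term (with respect to both $x$ and $y$, differing by a sign) and for the $p_B$-term.

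Next I would substitute these gradients into the two inclusions of Lemma \ref{le:snell2-sub}, obtaining for each $j$
\[
\omega_a\left[\frac{|\x_j-a_j|}{\|\x-a\|_{p_A}}\right]^{p_A-1}\sign(\x_j-a_j)+\omega_H\left[\frac{|\x_j-\y_j|}{\|\x-\y\|_{p_H}}\right]^{p_H-1}\sign(\x_j-\y_j)=\lambda_a\a_j,
\]
and the analogous equation for the $b$/$y$ inclusion with multiplier $\lambda_b$ and a minus sign in front of the $\omega_H$-term (because $\nabla_y\|\x-\y\|_{p_H}=-\nabla_x\|\x-\y\|_{p_H}$). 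Setting $\a_j=0$ in these two families immediately yields the two equations in item (1). For item (2), I restrict to indices $i,j$ with $\a_i\a_j\neq 0$, divide each of the two governing equations by $\a_i$ (resp. $\a_j$), and eliminate the unknown multipliers $\lambda_a,\lambda_b$ by equating the $i$-expression with the $j$-expression within each family; this is the standard trick already used in Lemma \ref{le:le1}. Rewriting $|\x_i-a_i|/\|\x-a\|_{p_A}$ as $\sin\gamma_{a_i}/|\a_i|$ using the definition $\sin_{p_A}\gamma_{a_i}=|\a_i\x_i-\a_i a_i|/\|\x-a\|_{p_A}$ (and likewise $\sin\gamma_{b_i}$ via $\|\y-b\|_{p_B}$), and observing that $\bigl[|\x_i-a_i|/\|\x-a\|_{p_A}\bigr]^{p_A-1}=\bigl[\sin\gamma_{a_i}/|\a_i|\bigr]^{p_A-1}$, turns the eliminated equations into precisely the two displayed identities of item (2). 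The $\omega_H$-terms keep the raw form $\bigl[|\x_i-\y_i|/\|\x-\y\|_{p_H}\bigr]^{p_H-1}$ since no generalized sine is associated with the in-hyperplane segment.

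Finally I would note that all steps are reversible: given $\x,\y$ on $\mathcal{H}$ satisfying item (1) for the null components and item (2) for the nonzero ones, one reconstructs valid multipliers $\lambda_a,\lambda_b$ (their common value on the $\a_i\neq0$ coordinates, arbitrary on the $\a_i=0$ coordinates where the projected gradient vanishes), recovering the inclusions of Lemma \ref{le:snell2-sub}, which are necessary and sufficient for optimality because the objective in \eqref{dt} is convex and the constraints $\a^t x=\beta$, $\a^t y=\beta$ are linear. Hence the conditions are both necessary and sufficient, as claimed. The only genuinely delicate point is the treatment of the case $\x=\y$: there the $p_H$-term is nondifferentiable and one must either argue that an optimal solution with $\x=\y$ reduces to the no-crossing situation already covered by Lemma \ref{le:le1} (so the stated equations are vacuous on that face), or interpret the gradient of $\|\cdot\|_{p_H}$ at $0$ as any element of its unit dual ball; I expect this degenerate case to be the main obstacle to a fully rigorous statement, but it does not affect the generic characterization.
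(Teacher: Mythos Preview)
Your proposal is correct and mirrors exactly what the paper intends: the corollary is stated without proof, as a direct rewriting of the two subdifferential inclusions of Lemma \ref{le:snell2-sub} under the differentiability granted by $1<p_A,p_B,p_H<+\infty$, following the same pattern already used to pass from Lemma \ref{le:le1} to the first Snell's-like corollary. Your explicit computation of the componentwise gradients, elimination of $\lambda_a,\lambda_b$ by dividing through $\a_i,\a_j$, and substitution of the generalized sines is precisely that derivation, and your remark on the degenerate case $\x=\y$ correctly identifies the only point where the subdifferential language of Lemma \ref{le:snell2-sub} is genuinely needed.
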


\renewcommand{\labelenumi}{\arabic{enumi})}

\begin{cor} \label{co:c15}
If $d=2$, $p_A=p_B=p_H=2$  and $\mathcal{H}=\{(x_1, x_2) \in \R^2:  x_2=0\}$, the points $\x$, $\y$ satisfy one of the following conditions:

\begin{enumerate}
\item $\omega_a \sin \theta_a =  \omega_b \sin \theta_b = \omega_H \frac{|\y_1|}{\|\x -\y\|_{p_H}}$ and $x^* \neq y^*$, or
\item $\omega_a \sin \theta_a =  \omega_b \sin \theta_b$ and $x^*=y^*$,
\end{enumerate}
where  $\theta_a$ is the angle between the vectors $a-\x$ and $(0,-1)$ and $\theta_b$  the angle between  $b-\y$ and $(0,1)$ (see Fig. \ref{fig:snellH}).
\end{cor}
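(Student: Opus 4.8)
The plan is to specialize Lemma~\ref{le:snell2-sub} to the stated situation ($d=2$, $p_A=p_B=p_H=2$, and $\mathcal H=\{x_2=0\}$) and unwind the subdifferential inclusions into the claimed trigonometric identities, carefully splitting into the two cases according to whether the access and exit points coincide. Since all three norms are $\ell_2$, the subdifferential of $\|\cdot\|_2$ at a nonzero vector $v$ is the singleton $\{v/\|v\|_2\}$, while at $0$ it is the closed unit ball; this dichotomy is exactly what produces the two alternatives in the statement.

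First I would write the two optimality conditions from Lemma~\ref{le:snell2-sub} componentwise. With $\a=(0,1)^t$ and $\beta=0$, the second component of each inclusion only fixes the multipliers $\lambda_a,\lambda_b$, so the informative equations come from the first ($j=1$) component, which reads $\omega_a\,\partial_{x_1}\|\x-a\|_2 + \omega_H\,\partial_{x_1}\|\x-\y\|_2 \ni 0$ and $\omega_b\,\partial_{y_1}\|\y-b\|_2 - \omega_H\,\partial_{y_1}\|\x-\y\|_2 \ni 0$ (the sign coming from the fact that $\|\x-\y\|_{p_H}$ depends on $\y$ through $-\y$). This is precisely item~(1) of the preceding Snell's-like corollary restricted to $d=2$ and the single index $j=1$ with $\a_1=0$.

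Next I would treat the two cases. \emph{Case $\x\neq\y$:} then $\|\x-\y\|_2\neq 0$, the subdifferential $\partial_{x_1}\|\x-\y\|_2$ is the single number $(\x_1-\y_1)/\|\x-\y\|_2$, and since $\x_2=\y_2=0$ this equals $\sign(\x_1-\y_1)=\pm 1$; moreover $|\y_1|/\|\x-\y\|_2 = |\x_1-\y_1|/\|\x-\y\|_2 = 1$ is consistent only after one checks the geometry (here $|\x_1-\y_1|=\|\x-\y\|_2$ because the segment lies on the line $x_2=0$), so the two scalar equations become $\omega_a\,\frac{\x_1-a_1}{\|\x-a\|_2} = \mp\,\omega_H$ and $\omega_b\,\frac{\y_1-b_1}{\|\y-b\|_2} = \pm\,\omega_H$. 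Identifying $\sin\theta_a = |\x_1-a_1|/\|\x-a\|_2$ (the angle between $a-\x$ and $(0,-1)$, since $a$ lies below $\mathcal H$) and $\sin\theta_b=|\y_1-b_1|/\|\y-b\|_2$, and noting the common value equals $\omega_H\,|\y_1|/\|\x-\y\|_2$ after rewriting $\pm1$ back in that form, yields item~(1). \emph{Case $\x=\y$:} then $\|\x-\y\|_2=0$ and $\partial\|\cdot\|_2(0)$ is the unit ball, so the term $\omega_H\,\partial_{x_1}\|\x-\y\|_2$ is the interval $[-\omega_H,\omega_H]$, call its selected element $\omega_H t$ with $|t|\le 1$; the two inclusions then force $\omega_a\,\frac{\x_1-a_1}{\|\x-a\|_2} = -\omega_H t$ and $\omega_b\,\frac{\x_1-b_1}{\|\x-b\|_2} = \omega_H t$, hence $\omega_a\sin\theta_a = \omega_b\sin\theta_b$, which is item~(2).

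The main obstacle I anticipate is bookkeeping of signs and the geometric orientation: one must verify that in the feasible (optimal) configuration $a$ lies strictly on the $\a^tx\le\beta$ side and $b$ on the other, so that the reference vectors $(0,-1)$ and $(0,1)$ are the correct ones and the generalized sines are genuinely $|\x_1-a_1|/\|\x-a\|_2$ and $|\y_1-b_1|/\|\y-b\|_2$; and one must confirm that, when $\x\neq\y$, the chosen subgradient of $\|\x-\y\|_2$ in the first coordinate has modulus exactly $1$ (not merely $\le 1$), which is where the hypothesis that $\x,\y\in\mathcal H$ — so the connecting segment is horizontal — is used. Everything else is a direct substitution into Lemma~\ref{le:snell2-sub}.
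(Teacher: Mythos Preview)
Your approach is essentially the paper's: specialize Lemma~\ref{le:snell2-sub} to $\a=(0,1)^t$, $\beta=0$, read off the first-coordinate equation (where $\a_1=0$), and split into the cases $\x\neq\y$ (smooth) and $\x=\y$ (subdifferential of $\|\cdot\|_2$ at $0$ is the unit ball). The paper handles the degenerate case $\x=\y$ by citing Corollary~\ref{cor4} rather than unpacking the subdifferential as you do, but that is the same computation.

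The one point where you diverge --- and where your self-flagged obstacle actually bites --- is the identification of the common value with $\omega_H\,|\y_1|/\|\x-\y\|_2$. Your claim ``$|\y_1|/\|\x-\y\|_2=|\x_1-\y_1|/\|\x-\y\|_2$'' is false without further normalization: it needs $\x_1=0$. The paper secures exactly this by exploiting the isotropy of the Euclidean norm to translate so that $\x_1=0$ (and $\y_1\ge 0$, $a_1\ge 0$, $a_2<0$, $b_1\le 0$, $b_2>0$) \emph{before} writing down the optimality conditions; with that normalization, $|\x_1-\y_1|=|\y_1|=\|\x-\y\|_2$ and your equations match the statement verbatim. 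Add that change of variable and your argument is complete.
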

\renewcommand{\labelenumi}{\arabic{enumi}.}
\begin{proof}
To prove \emph{1)}, since the Euclidean norm is isotropic, we can assume w.l.o.g. that after a change of variable $\x$ and $\y$ can be taken such that $\x_1=0$, $\y_1\ge  0$ and $a=(a_1,a_2)$ such that $a_1\ge 0$, $a_2<0$,  $b=(b_1,b_2)$ such that $b_1\le 0$, $b_2>0$.

The optimality condition using Lemma \ref{le:snell2-sub}, assuming $x^*\neq y^*$, is:
\begin{eqnarray}
\omega_a \frac{|a_1|}{\|\x-a\|_2}-\omega_H \frac{|\y_1|}{\|\x -y\|_{2}} &=0, \nonumber \\
-\omega_b \frac{|\y_1 -b_1|}{\|\y-b\|_2}+\omega_H\frac{|\y_1|}{\|\x -\y\|_{2}}&=0. \label{eq:2ecua}
 \end{eqnarray}

The result follows since $\sin \theta_a=\frac{|a_1|}{\|\x-a\|_2}$, $\sin \theta_b=\frac{|\y_1-b_1|}{\|\y-b\|_2}$.

If $x^*=y^*$ the result for condition \emph{2)} follows from Corollary \ref{cor4}.

\end{proof}
Note that in Corollary \ref{co:c15} one can make w.l.o.g. the assumption that the separating line is $x_2=0$ due to the isotropy of the Euclidean norm.

We observe that if $\omega_a=\omega_b=\omega_H=1$, and $y_1>0$ from the equation (\ref{eq:2ecua})  we get $|\y_1-b_1|=\|\y-b\|_2$ which is impossible unless $b_2=0$ which contradicts the hypotheses in the proof. Therefore, $\y_1$ cannot be greater than zero. Hence, in this case the condition reduces to $\x=\y$ and $\omega_a \frac{|a_1|}{\|\x-a\|_2}=\omega_b \frac{|b_1|}{\|\y-b\|_2}$ or in other words $\sin \theta_a = \sin \theta_b$.

Note also that the case when $\omega_H=0$ and $\omega_a \omega_b \neq 0$, reduces to compute the projections onto $\mathcal{H}$, of each one of the points $a$ and $b$.  Indeed by condition \emph{1)} in  Corollary \ref{co:c15}, $\sin \theta_a = \sin \theta_b =0$, being $\theta_a=\theta_b=0$ (see Fig. \ref{fig:snellH2}).

\begin{minipage}{\linewidth}
\centering
\begin{minipage}{.42\linewidth}
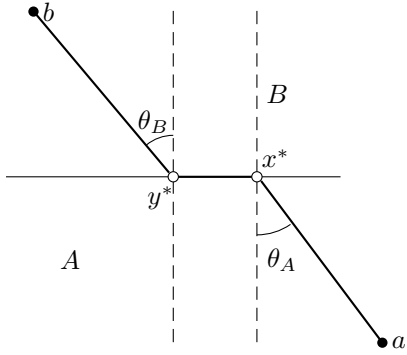
\begin{figure}[H]
\begin{center}
\begin{tikzpicture}[scale=0.55]

    \coordinate (O) at (0,0) ;
    \coordinate (A) at (0,4) ;
    \coordinate (B) at (0,-4) ;
    \coordinate (T) at (2,0);

    \node[right] at (2,2) {$B$};
    \node[left] at (-2,-2) {$A$};
\node[right] at (130:5.2) {$b$};
\node[right] at (5,-4) {$a$};
\node[below] at (-0.3,0) {$\y$};
\node[above] at (2.45,0) {$\x$};

    \draw[dash pattern=on5pt off3pt] (A) -- (B) ;
        \draw[dash pattern=on5pt off3pt] (2,4) -- (2,-4) ;
\draw (-4,0) -- (4,0);

\draw[thick] (0.1, 0) -- (1.9,0);
     \draw[thick] (-3.35,4) -- (-0.1,0.1);
\draw[thick] (2.1,-0.1) -- (5,-4);
    \draw (0,1) arc (90:130:1);
    \draw (2,-1.4) arc (270:305:1.4) ;
    \node[] at (2.6,-2)  {$\theta_{A}$};
    \node[] at (110:1.4)  {$\theta_{B}$};

    \fill (130:5.2) circle (3.5pt);
    \filldraw[color=black, fill=white] (2,0) circle (3.5pt);
    \filldraw[color=black, fill=white] (O) circle (3.5pt);
     \fill (5,-4) circle (3.5pt);

\end{tikzpicture}
\end{center}
\caption{Snell's law when traversing $\mathcal{H}$.\label{fig:snellH}}
\end{figure}
\end{minipage}
\begin{minipage}{.51\linewidth}
\begin{figure}[H]
\begin{center}
\begin{tikzpicture}[scale=0.55]

    \coordinate (O) at (0,0) ;
    \coordinate (A) at (0,4) ;
    \coordinate (B) at (0,-4) ;
    \coordinate (T) at (2,0);

    \draw[dash pattern=on5pt off3pt] (A) -- (B) ;
        \draw[dash pattern=on5pt off3pt] (2,4) -- (2,-4) ;
    \node[right] at (2,2) {$B$};
    \node[left] at (-2,-2) {$A$};
\node[right] at (0,4) {$b$};
\node[below] at (-0.3,0) {$\y$};
\node[right] at (2,-4) {$a$};
\node[above] at (2.45,0) {$\x$};
    \draw[thick] (0,4) -- (0,0.1) ;
\draw (-4,0) -- (4,0);

\draw[thick] (0.1,0) -- (1.9,0);
\draw[thick] (2,-0.1) -- (2,-4);

    \fill (0,4) circle (3.5pt);
    \filldraw[color=black, fill=white] (O) circle (3.5pt);
    \filldraw[color=black, fill=white] (2,0) circle (3.5pt);
     \fill (2,-4) circle (3.5pt);

\end{tikzpicture}
\end{center}
\caption{Snell's law when traversing $\mathcal{H}$ and $\omega_H=0$.\label{fig:snellH2}}
\end{figure}
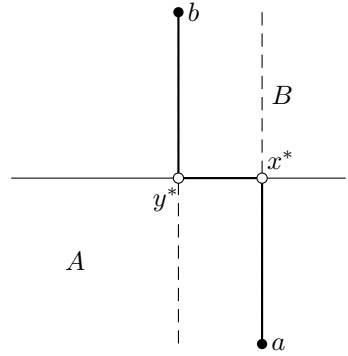
\end{minipage}
\end{minipage}

\begin{lem} \label{le:ABH}
Let $a \in {\rm H}_A$ and $b \in {\rm H}_B$. Then,
\begin{enumerate}
\item If $\max\{p_A,p_B\}\ge p_H$ the shortest path distance $d_t(a,b)=\dmin_{x:\a^t x=\beta} \|x-a\|_{p_A}+\|x-b\|_{p_B}$, i.e. it crosses $\mathcal{H}$ at a unique point.
\item If $p_H\ge \max\{p_A,p_B\}$ then
the shortest path from $a$ to $b$ may contain a non-degenerated segment on  $\mathcal{H}$.
\end{enumerate}
\end{lem}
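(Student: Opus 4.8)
The plan is to analyze the auxiliary minimization
$$
d_t(a,b)=\min_{x,y\in\mathcal H}\;\|x-a\|_{p_A}+\|x-y\|_{p_H}+\|y-b\|_{p_B},
$$
and to determine whether the optimal pair $(x^*,y^*)$ necessarily coincides (a single gate) or may genuinely differ (a segment of positive length on $\mathcal H$). For part (1), I would argue by a substitution/triangle-inequality comparison: suppose $\max\{p_A,p_B\}\ge p_H$, say $p_A\ge p_H$, and take any feasible $(x,y)$ with $x\ne y$ both on $\mathcal H$. The key inequality I want is that, for the \emph{middle} leg, one can always ``slide'' the gate closer together without increasing the total length, ending with $x=y$. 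Concretely I would show that replacing $(x,y)$ by $(y,y)$ gives
$$
\|y-a\|_{p_A}+\|y-b\|_{p_B}\;\le\;\|x-a\|_{p_A}+\|x-y\|_{p_H}+\|y-b\|_{p_B},
$$
which reduces to proving $\|y-a\|_{p_A}\le\|x-a\|_{p_A}+\|x-y\|_{p_H}$. This is \emph{not} the plain triangle inequality because the two norms differ; the mechanism is that $x-y$ lies in the direction of $\mathcal H$ (i.e. $\a^t(x-y)=0$), and along such ``horizontal'' displacements the $\ell_{p_A}$-norm is dominated by the $\ell_{p_H}$-norm up to the relevant geometry — more precisely I would invoke the monotonicity $\|v\|_{p_A}\le\|v\|_{p_H}$ when $p_A\ge p_H$, combined with the genuine triangle inequality $\|y-a\|_{p_A}\le\|x-a\|_{p_A}+\|x-y\|_{p_A}$. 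Chaining these two facts yields the claim, and hence the optimum is attained with a single crossing point, reducing $d_t$ to the one-gate formula of Lemma~\ref{le:le1}/\ref{le:le2}.

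For part (2), when $p_H\ge\max\{p_A,p_B\}$, the idea is the opposite: travelling along $\mathcal H$ is now ``cheap'' relative to the ambient media, so the optimal path may prefer to reach $\mathcal H$ by a short $p_A$-geodesic, run along it under the favorable $p_H$-metric, and leave by a short $p_B$-geodesic. I would exhibit this by a concrete construction or a perturbation argument: start from the one-gate optimum $\bar x=\bar y$ of the restricted problem $\min_{x\in\mathcal H}\|x-a\|_{p_A}+\|x-b\|_{p_B}$, and show that under $p_H>\max\{p_A,p_B\}$ there is a direction $w$ tangent to $\mathcal H$ along which splitting $x^*=\bar x+tw$, $y^*=\bar x$ (for small $t>0$) strictly decreases the objective — this happens precisely when the directional derivative of $\|x-a\|_{p_A}+\|y-b\|_{p_B}$ in the splitting direction is more negative than the $p_H$-cost $\|tw\|_{p_H}/t$ of the new middle segment, which is arranged by the norm inequality $\|w\|_{p_H}\le\|w\|_{p_A},\|w\|_{p_B}$ being \emph{strict} for suitable non-axis-aligned $w$. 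Exhibiting one instance where strict improvement occurs suffices for the (qualitative) claim ``may contain a non-degenerated segment''. The cleanest route is probably to give an explicit small example (e.g. $d=2$, $\mathcal H=\{x_2=0\}$, $p_A=p_B=2$, $p_H$ large, with $a,b$ placed symmetrically) and verify by the optimality conditions of Lemma~\ref{le:snell2-sub} — equivalently Corollary~\ref{co:c15} — that $x^*\ne y^*$.

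The main obstacle is the first step: the inequality $\|y-a\|_{p_A}\le\|x-a\|_{p_A}+\|x-y\|_{p_H}$ genuinely mixes two different norms and is false for arbitrary $x-y$; it is only the constraint that $x-y$ is a \emph{feasible displacement within $\mathcal H$} together with the norm-monotonicity $p_A\ge p_H\Rightarrow\|\cdot\|_{p_A}\le\|\cdot\|_{p_H}$ on $\R^d$ that saves it. I would need to state the monotonicity of $\ell_p$-norms in $p$ carefully (it is $\|v\|_q\le\|v\|_p$ for $q\ge p$, so indeed $p_A\ge p_H$ gives $\|\cdot\|_{p_A}\le\|\cdot\|_{p_H}$, matching $\max\{p_A,p_B\}\ge p_H$), and make sure the direction of the inequality is what part (1) requires. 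Once that lemma is in place, part (1) is a one-line chaining and existence of the minimizer follows from coercivity/continuity as in Theorem~\ref{t1}; part (2) is then just producing a witness, which I expect to be routine but should be written out to make the ``may'' precise.
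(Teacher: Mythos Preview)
Your approach for part (1) is essentially the paper's: bound $\|x-y\|_{p_H}\ge\|x-y\|_{p_A}$ by norm monotonicity (since $p_A\ge p_H$) and then apply the triangle inequality in $\ell_{p_A}$ to collapse $(x,y)$ to a single gate $y$. One unnecessary worry in your plan: you flag the constraint $\a^t(x-y)=0$ as essential to the mixed-norm inequality, but it is not---the inequality $\|v\|_{p_A}\le\|v\|_{p_H}$ holds for \emph{every} $v\in\R^d$ once $p_A\ge p_H$, so the chaining
\[
\|y-a\|_{p_A}\le\|x-a\|_{p_A}+\|x-y\|_{p_A}\le\|x-a\|_{p_A}+\|x-y\|_{p_H}
\]
needs nothing about the direction of $x-y$. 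Drop that caveat and the argument is exactly the paper's two-line proof.

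For part (2) the paper gives no argument at all; your plan to exhibit a witness is the right reading of ``may contain''. Your proposed perturbation with $p_A=p_B=2$ and unit weights will \emph{not} work (the discussion after Corollary~\ref{co:c15} shows the gates coincide in that Euclidean, unweighted case), so go directly to a concrete example with a genuinely faster hyperplane norm---e.g.\ $p_A=p_B=1$, $p_H=\infty$ on a diagonal line, as in Example~\ref{ex3}, where one checks by hand that the optimal $(x^*,y^*)$ are distinct.
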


\begin{proof}
Let  us consider the general form of the solution to determine $d_{t}(a,b)$, namely
$$ d_{t}(a,b)= \dmin_{x, y \in \mathcal{H}} \|x-a\|_{p_A}+\|x-y\|_{p_H}+\|y-b\|_{p_B}.$$
Clearly, if $p_A\ge p_H$,  we have
\begin{eqnarray*}
\|x-a\|_{p_A}+\|x-y\|_{p_H}+\|y-b\|_{p_B} & \ge  & \|x-a\|_{p_A}+\|x-y\|_{p_A}+\|y-b\|_{p_B};\\
\mbox{( by the triangular inequality)} & \ge & \|y-a\|_{p_A}+\|y-b\|_{p_B}.
\end{eqnarray*}

\vspace*{-0.5cm}

\end{proof}

\begin{defn}
We say that the norms $\ell_{p_A}$, $\ell_{p_B}$ and $\ell_{p_H}$ satisfy the \emph{Rapid Enough Transit Media Condition} (RETM) for $a \in A$ and $b\in B$ if:
\begin{enumerate}
\item For $\y \in \arg\dmin_{y \in \mathcal{H}} \|y-a\|_{p_A}$, $\|a-\y\|_{p_A} + \|x-\y\|_{p_H} \leq \|x-a\|_{p_A}$, for all $x \in \mathcal{H}$, and
\item For $\x \in \arg\dmin_{x \in \mathcal{H}} \|x-b\|_{p_B}$, $\|b-\x\|_{p_B} + \|\x - y\|_{p_H} \leq \|y-b\|_{p_B}$, for all $y \in \mathcal{H}$.
\end{enumerate}
\end{defn}
Note that the above definition states that a triplet of norms $(\ell_{p_A}, \ell_{p_B}, \ell_{p_H})$ satisfies the condition if the norm defined over the hyperplane $\mathcal{H}$ is \textit{`faster enough'} to reverse the triangle inequality when mixing the norms, i.e., when the shortest path from a point outside the hyperplane to another point in the hyperplane benefits from traveling throughout the hyperplane.

\begin{lem}
\label{lemma:retl}
Let $a \in {\rm H}_A$ and $b \in {\rm H}_B$. Then, if $p_H \ge p_A \ge p_B$ and the corresponding norms satisfy the RETM condition for $a$ and $b$, the shortest path from $a$ to $b$ crosses throughout $\mathcal{H}$ in the following two points:
$$
\x = a - \dfrac{\a^t a -\beta}{\|\alpha\|_{p_A}^*} \, \delta^A_\alpha \quad \text{and} \quad \y = b - \dfrac{\a^t b -\beta}{\|\alpha\|_{p_B}^*} \, \delta^B_\alpha
$$
where $\|\cdot\|_{p_A}^*$ and $\|\cdot\|_{p_B}^*$ are the dual norms to $\|\cdot\|_{p_A}$ and $\|\cdot\|_{p_B}$, respectively, and $\delta^A_\alpha \in \arg\max_{\|\delta\|_{p_A}=1} \a^t \delta$, $\delta^B_\alpha \in \arg\max_{\|\delta\|_{p_B}=1} \a^t \delta$.
\end{lem}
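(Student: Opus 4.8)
The plan is to identify, for a point $a\in{\rm H}_A$ and the hyperplane $\mathcal{H}$, the point $\x$ on $\mathcal{H}$ that minimizes $\|x-a\|_{p_A}$, and then argue that under the RETM condition the optimal access/exit points of the shortest path $d_t(a,b)$ coincide with these foot-of-perpendicular points. First I would recall the standard duality fact that for any $a$ and any hyperplane $\{x:\a^t x=\beta\}$, the closest point of the hyperplane to $a$ in the norm $\|\cdot\|_{p_A}$ is obtained by moving from $a$ in a direction $\delta^A_\alpha$ that attains $\max_{\|\delta\|_{p_A}=1}\a^t\delta=\|\a\|_{p_A}^*$, scaled so that the hyperplane is reached; explicitly, $\x=a-\tfrac{\a^ta-\beta}{\|\a\|_{p_A}^*}\,\delta^A_\alpha$, and the minimal distance is $\dmin_{x\in\mathcal{H}}\|x-a\|_{p_A}=\tfrac{|\a^ta-\beta|}{\|\a\|_{p_A}^*}$. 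This is just the definition of the dual norm applied to the linear functional $\a^t(\cdot)-\beta$ restricted to the affine subspace $\mathcal{H}$, together with the fact that the subdifferential of $\|\cdot\|_{p_A}$ at the minimizer is aligned with $\a$. The same computation gives $\y=b-\tfrac{\a^tb-\beta}{\|\a\|_{p_B}^*}\,\delta^B_\alpha$ as the $\|\cdot\|_{p_B}$-closest point of $\mathcal{H}$ to $b$.

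Next I would plug these candidates into the general formula $d_t(a,b)=\dmin_{x,y\in\mathcal{H}}\|x-a\|_{p_A}+\|x-y\|_{p_H}+\|y-b\|_{p_B}$ and show that they are globally optimal. Fix any competitor pair $(x,y)\in\mathcal{H}\times\mathcal{H}$. Apply part (1) of the RETM condition with this $x$: since $\y$ minimizes $\|\cdot-a\|_{p_A}$ over $\mathcal{H}$, we have $\|a-\y\|_{p_A}+\|x-\y\|_{p_H}\le\|x-a\|_{p_A}$. Wait — the RETM condition as stated in part (1) uses the $a$-foot point for the $p_H$ leg anchored at an arbitrary $x\in\mathcal{H}$; to get the two-foot-point configuration I instead want to compare against the segment $\x$–$\y$. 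The cleaner route: given any feasible $(x,y)$, use RETM(1) to replace the leg from $a$ to $x$ by the detour $a\to\y'\to x$ where $\y'$ is the $a$-foot point, obtaining $\|x-a\|_{p_A}\ge\|a-\y'\|_{p_A}+\|x-\y'\|_{p_H}$; symmetrically use RETM(2) to replace the leg from $y$ to $b$. Then the middle leg $\|x-y\|_{p_H}$ together with $\|x-\y'\|_{p_H}$ and the $b$-side analogue can be collapsed by the triangle inequality in $\|\cdot\|_{p_H}$ down to $\|\y'-\x'\|_{p_H}$ where $\x',\y'$ are the two feet. This shows the feet-of-perpendicular configuration dominates every competitor, hence is optimal; I then rename $\x:=\x'$ (the $a$-side foot, which earlier notation in the lemma statement calls the $p_A$-foot, matching the displayed formula if one swaps the roles — I would reconcile the labels carefully with the statement).

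The main obstacle I anticipate is precisely this bookkeeping of which foot point goes with which leg, because the RETM condition is phrased asymmetrically (the $p_A$-foot of $a$ appears with a $p_H$-leg to a variable point, and likewise for $b$), so one must chain two applications of RETM with one application of the triangle inequality for $\|\cdot\|_{p_H}$, and verify that after the chaining the residual $p_H$-leg is exactly the segment between the two named foot points rather than some longer path. I would also need the hypothesis $p_H\ge p_A\ge p_B$ only implicitly, via Lemma~\ref{le:ABH}(2), to know that a genuinely two-point (non-degenerate segment) crossing can be optimal at all — without it the optimum could collapse to a single point and the two feet would generically differ, making the claimed formula false; so an explicit remark that RETM forces the two-point structure is warranted. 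Finally, I would observe that $\delta^A_\alpha$ (resp. $\delta^B_\alpha$) is exactly a subgradient direction realizing the dual norm, so the displayed formulas for $\x$ and $\y$ are the unique points on $\mathcal{H}$ where the optimality conditions of Lemma~\ref{le:snell2-sub} are met with $\lambda_a,\lambda_b$ chosen so that $\a$ lies in the relevant subdifferentials, closing the loop with the earlier characterization.
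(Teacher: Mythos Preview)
Your proposal is correct and follows essentially the same approach as the paper: identify $\x,\y$ as the norm-projections of $a,b$ onto $\mathcal{H}$ (the paper simply cites Mangasarian for the dual-norm formula you derive), then for an arbitrary competitor pair $(x,y)\in\mathcal{H}\times\mathcal{H}$ apply the two RETM inequalities to replace $\|a-x\|_{p_A}$ and $\|b-y\|_{p_B}$ by the foot-point legs plus $p_H$-segments, and collapse the resulting three $p_H$-segments to $\|\x-\y\|_{p_H}$ via two triangle inequalities. Your label confusion is real but self-diagnosed (the RETM definition and the lemma statement swap the roles of $\x$ and $\y$), and your closing remarks about $p_H\ge p_A\ge p_B$ and Lemma~\ref{le:snell2-sub} are not needed for the argument---the paper's proof does not invoke either.
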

\begin{proof}
First, note that $\x$ and $\y$ correspond with the projections of $a$ and $b$ onto $\mathcal{H}$, respectively (see \cite{mangasarian}). Let $x, y \in \mathcal{H}$ be alternative gate points in a path from $a$ to $b$. Then
\begin{align*}
\|b-y\|_{p_B} + \|x-y\|_{p_H} + \|a-x\|_{p_A} &\stackrel{RETM}{\geq} \|b-\y\|_{p_B} + \|\y-y\|_{p_H} + \|x-y\|_{p_H} + \|a-\x\|_{p_A} \\
& + \|\x-x\|_{p_H}\\
& \geq \|b-\y\|_{p_B} + \|a-\x\|_{p_A}  + \|\y-x\|_{p_H} + \|\x-x\|_{p_H}\\
& \geq \|b- \y\|_{p_B} + \|a-\x\|_{p_A}  + \|\y-\x\|_{p_H}.
\end{align*} 
\end{proof}

\begin{ex}
\label{ex3}
Let $\mathcal{H}=\{(x,y) \in \R^2: y=x\}$ and $a=(4, 5)^t \in {\rm H}_A$, $b=(12,11)^t \in {\rm H}_B$ with $p_A=p_B=1$ and $p_H=+\infty$. We observe that these norms satisfy the RETM condition for $a$ and $b$. First of all, we realize that, $\x$ and $\y$, the closest $\ell_1$-points to $a$ and $b$, respectively, on $\mathcal{H}$ must belong to $\x \in [(4,4), (5,5)]$ and $\y \in [(11,11), (12,12)]$, respectively.
\begin{enumerate}
\item Let $(y,y) \in \mathcal{H}$. $\|a-\x\|_1 + \|\x-(y,y)\|_\infty = 1 + \min \{|4-y|, |5-y|\}$ and $\|a-(y,y)\|_1 = |4-y| +|5-y|$. Then, for $y\geq 5$, we get that $1 + (y-5) = y-4 \leq (y-4) + (y-5) =2y - 9$, which is always true for $y \geq 5$. Otherwise, if $y \leq 4$, $1 + (4-y) = 5-y \leq  (4-y) + (5-y) = 9-2y$, which is always true for $y \leq 4$.
\item Let $(x,x) \in \mathcal{H}$. $\|b-\y\|_1 + \|\y-(x,x)\|_\infty = 1 + \min \{|11-x|, |12-x|\}$ and $\|a-(x,x)\|_1 = |12-x| +|11-x|$. Then, for $x\geq 12$, we get that $1 + (x-12) = x-11 \leq (x-12) + (x-11) =2x - 23$, which is always true for $x \geq 12$. Otherwise, if $x \leq 11$, $1 + (11-x) = 12-x\leq  (12-x) + (11-x) = 23-2x$, which is always true for $x \leq 11$.
    \end{enumerate}

    \vspace*{-0.5cm}

\begin{center}
\begin{minipage}{.45\linewidth}
    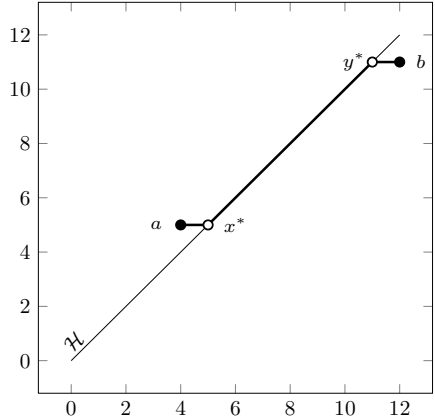
\begin{figure}[H]
\begin{center}
\begin{tikzpicture}[scale=0.8]
  \begin{axis}[
    anchor=origin,  
    x=0.45cm, y=0.45cm,   
    ]
    \addplot[mark=o, very thick] plot coordinates {
        (4,5)
        (5,5)
        (11,11)
        (12,11)
    };

     \addplot[mark=none, black] plot coordinates {
        (0,0)
        (12,12)
    };

     \addplot[mark=*, very thin] plot coordinates {
        (4,5)
    };

         \addplot[mark=*, very thin] plot coordinates {
        (12,11)
    };

         \addplot[mark=*, draw=black,fill=white, very thin] plot coordinates {
        (11,11)
    };

         \addplot[mark=*, draw=black,fill=white, very thin] plot coordinates {
        (5,5)
    };

    \node[rotate=45] at (0.9,7)  {\normalsize$\mathcal{H}$};
        \end{axis}

\draw(1.4,2.25) node {\scriptsize $a$};
\draw(2.7,2.25) node {\scriptsize $\x$};
\draw(4.65,4.95) node {\scriptsize $\y$};
\draw(5.75,4.95) node {\scriptsize $b$};
\end{tikzpicture}
\caption{Shortest distance from $a$ to $b$ in Example \ref{ex3}.\label{fig4}}
\end{center}
\end{figure}
\end{minipage}
\end{center}

Hence, the RETM condition is satisfied, and the shortest path from $a$ to $b$ crosses in $\mathcal{H}$ through their projections:
    $$
    \x = (5,5) \quad \text{and} \quad \y = (11,11).
    $$
    The overall length of this path is $\|a-\x\|_1+\|\x-\y\|_\infty + \|b-\y\|_1 = 1 + 6 + 1 = 8$ (see Fig. \ref{fig4}).

\end{ex}

Note that the RETM condition is defined for any triplet of norms ($\ell_{p_A}, \ell_{p_B}, \ell_{p_H}$) and for any pair of points $a$ and $b$. Hence, unless the condition is fulfilled for all pair of points $a \in A$ and $b\in B$, we cannot extend Lemma \ref{lemma:retl} to the location of all the points in $A$ and $B$. Actually, even for the slowest $\ell_p$-norm in ${\rm H}_A$ and ${\rm H}_B$, namely $\ell_1$, and the fastest one in $\mathcal{H}$, namely $\ell_\infty$, it is easy to check that such a condition is not verified for any pair of points.

Once we have analyzed  shortest paths between points in the framework of the location problem to be solved, we come back to the original goal of this section: the location of a new facility to minimize the weighted sum of shortest path distances from the demand points.
Thus, the problem that we wish to analyze in this section can be stated similarly  as in \eqref{p1}.

\begin{equation}
  \dmin_{x\in \R^d} \; \dsum_{a \in A} \omega_a d_t(x, a) + \dsum_{b\in B} \omega_b \,d_t(x, b). \label{PT0}\tag{${\rm PT}$}\\
\end{equation}

Note that Problem \eqref{p1}, analyzed in Section \ref{s:locpro}, is a particular case of Problem \eqref{PT0} when the two crossing points $y^1$ and $y^2$ are enforced to be equal, i.e. whenever it is not allowed to move traversing the hyperplane when computing shortest paths between the different media.

By similar arguments to those used in Theorem \ref{t1} we can also state an existence and uniqueness result for Problem \eqref{PT0}.
\begin{theorem}
\label{tt1}
Assume that $\min\{|A|,|B|\}>2$. If the points in $A$ or $B$ are not collinear and $p_B>1$ or $p_A<+\infty$ then Problem \eqref{PT0} always has a unique optimal solution.
\end{theorem}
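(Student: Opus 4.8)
The plan is to mirror the proof of Theorem \ref{t1}, the only genuinely new ingredient being the third norm $\|\cdot\|_{p_H}$ carried by $\mathcal H$. First I would split \eqref{PT0} according to the halfspace containing the new facility, introducing for every demand point lying on the side of $\mathcal H$ opposite to $x$ a \emph{pair} of gate variables on $\mathcal H$ (the access point and the exit point of the path). Concretely, set
\[
f_{\le}(x,y):=\sum_{a\in A}\omega_a\|x-a\|_{p_A}+\sum_{b\in B}\omega_b\big(\|x-y_b^1\|_{p_A}+\|y_b^1-y_b^2\|_{p_H}+\|y_b^2-b\|_{p_B}\big),
\]
\[
f_{>}(x,y):=\sum_{b\in B}\omega_b\|x-b\|_{p_B}+\sum_{a\in A}\omega_a\big(\|x-y_a^1\|_{p_B}+\|y_a^1-y_a^2\|_{p_H}+\|y_a^2-a\|_{p_A}\big),
\]
so that, writing $f^*$ for the optimal value of \eqref{PT0},
\[
f^*=\min\Big\{\;\inf_{\a^tx\le\beta,\;\a^ty_b^1=\a^ty_b^2=\beta\;\forall b}f_{\le}(x,y)\;,\;\inf_{\a^tx>\beta,\;\a^ty_a^1=\a^ty_a^2=\beta\;\forall a}f_{>}(x,y)\;\Big\}=:\min\{({\rm SP}_{\le}),({\rm SP}_{>})\}.
\]

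Next I would check that $f_{\le}$ and $f_{>}$ are continuous and coercive; coercivity must now be verified in the gate variables as well, but a routine chain of (norm-equivalence and) triangle inequalities shows that whenever $x$, or one of the $y^1$, or one of the $y^2$ diverges, at least one of the three summands in the corresponding bracket --- or the term $\sum_{a\in A}\omega_a\|x-a\|_{p_A}$, which is nonempty because $|A|>2$ --- diverges. Since the feasible set of $({\rm SP}_{\le})$ is closed, that subproblem attains its infimum. Strict convexity of $f_{\le}$ on its domain then follows exactly as in Theorem \ref{t1}: for distinct feasible points and $0<\la<1$ the strict inequality already comes from $\sum_{a\in A}\omega_a\|x-a\|_{p_A}$ (or, symmetrically for $f_{>}$, from $\sum_{b\in B}\omega_b\|x-b\|_{p_B}$), using that $A$ (resp. $B$) is not collinear and that the relevant exponent lies strictly between $1$ and $+\infty$ --- which is precisely the hypothesis. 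Hence $({\rm SP}_{\le})$ has a unique minimizer, and likewise $f_{>}$ is strictly convex on its open domain, so $({\rm SP}_{>})$, if it attains its infimum, does so at a unique interior point.

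It remains to treat the case where $({\rm SP}_{>})$ does not attain its infimum, so that the infimum is approached at some $\bar x$ with $\a^t\bar x=\beta$. Here I would reproduce the closing estimate of Theorem \ref{t1}: evaluating $f_{\le}$ at $\bar x$ with the gate variables collapsed to $y_b^1=y_b^2=\bar x$ gives the value $\sum_{a\in A}\omega_a\|\bar x-a\|_{p_A}+\sum_{b\in B}\omega_b\|\bar x-b\|_{p_B}$, and one wants $f_{>}(\bar x,\bar y)$ to dominate it; to this end one bounds the three-segment length $\|\bar x-\bar y_a^1\|_{p_B}+\|\bar y_a^1-\bar y_a^2\|_{p_H}+\|\bar y_a^2-a\|_{p_A}$ from below by $\|\bar x-a\|_{p_A}$. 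Using $p_A\ge p_B$ (so $\|\cdot\|_{p_B}\ge\|\cdot\|_{p_A}$) together with the triangle inequality this goes through verbatim, as in Theorem \ref{t1}, provided the middle ($\mathcal H$-)segment satisfies $\|\cdot\|_{p_H}\ge\|\cdot\|_{p_A}$, i.e. $p_H\le p_A$; and in that regime, by Lemma \ref{le:ABH}, Problem \eqref{PT0} in fact coincides with Problem \eqref{p1}, so Theorem \ref{t1} applies directly. The remaining regime $p_H>p_A$ --- the one in which traversing $\mathcal H$ genuinely shortens paths --- is where this domination is delicate: one must weigh how much an $A$-point and a $B$-point can each gain by moving along $\mathcal H$ out of $\bar x$, and turning that balance into an inequality between the two subproblems is the step I expect to be the main obstacle. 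Once it is secured, $\bar x$ with the collapsed gate variables is feasible for $({\rm SP}_{\le})$ with no larger value, whence $f^*$ is attained --- necessarily at the unique minimizer of $({\rm SP}_{\le})$ --- and uniqueness follows.
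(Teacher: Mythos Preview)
Your approach is exactly what the paper does: it gives no proof at all, merely the sentence ``By similar arguments to those used in Theorem~\ref{t1}'' immediately before the statement. Your write-up is therefore strictly more detailed than the paper's, and in particular your flag on the boundary comparison in the regime $p_H>p_A$ is a genuine subtlety that the paper simply does not address; so there is nothing in the paper's own argument for you to have missed.
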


It is also possible to give sufficient conditions so that Problem \eqref{PT0} reduces to \eqref{p1}. The following proposition clearly follows from Lemma \ref{le:ABH}.

\begin{prop} \label{pro:reduce}
Let $A, B \subseteq \R^d$ and $\mathcal{H}=\{x\in \R^d: \a^tx = \beta\}$. Then, if $p_A \ge p_B \ge p_H$, Problem \eqref{PT0} reduces to Problem \eqref{p1}.
\end{prop}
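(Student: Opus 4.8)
The plan is to prove Proposition~\ref{pro:reduce} by showing that, under the hypothesis $p_A\ge p_B\ge p_H$, the two shortest–path distances coincide pointwise: $d_t(x,p)=d_{p_A,p_B}(x,p)$ for every $x\in\R^d$ and every demand point $p\in A\cup B$. Once this identity is in hand, the objective functions of \eqref{PT0} and \eqref{p1} are literally the same function of $x$, so the two problems have the same feasible set, the same optimal value, and the same (by Theorems~\ref{t1} and \ref{tt1}, unique) optimal solution; this is exactly the meaning of ``\eqref{PT0} reduces to \eqref{p1}''.

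First I would dispose of the trivial case: if $x$ and $p$ lie in the same halfspace induced by $\mathcal{H}$, then by the very definitions \eqref{dt} and the one in Section~\ref{s:locpro}, both $d_t(x,p)$ and $d_{p_A,p_B}(x,p)$ equal $\|x-p\|_{p_A}$ or $\|x-p\|_{p_B}$ (whichever halfspace they share), so there is nothing to do. The content is therefore confined to the case in which $x$ and $p$ lie on opposite sides of $\mathcal{H}$. Writing (up to interchanging roles) $a$ for the point in ${\rm H}_A$ and $c$ for the point in $\overline{{\rm H}}_B$, one has $d_t(a,c)=\dmin_{u,v\in\mathcal{H}}\|u-a\|_{p_A}+\|u-v\|_{p_H}+\|v-c\|_{p_B}$ while $d_{p_A,p_B}(a,c)=\dmin_{y:\,\a^ty=\beta}\|y-a\|_{p_A}+\|y-c\|_{p_B}$.

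The key step is then a direct appeal to Lemma~\ref{le:ABH}(1). Since $p_A\ge p_B\ge p_H$ forces $\max\{p_A,p_B\}=p_A\ge p_H$, that lemma tells us that the three–term minimum defining $d_t(a,c)$ is attained with the two gate points equal, and is therefore equal to $\dmin_{y:\,\a^ty=\beta}\|y-a\|_{p_A}+\|y-c\|_{p_B}=d_{p_A,p_B}(a,c)$, exactly as computed by Lemmas~\ref{le:le1} and \ref{le:le2}. (Recall that the proof of Lemma~\ref{le:ABH} uses precisely the inequality $p_A\ge p_H$ together with the triangle inequality for $\|\cdot\|_{p_A}$ to collapse the middle segment.) Combining the two cases gives $d_t\equiv d_{p_A,p_B}$ and hence the proposition. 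The only point that needs a word of care --- and the closest thing here to an obstacle --- is the degenerate situation $x\in\mathcal{H}$: I would note that both models share the convention ${\rm H}_A=\{\a^tx\le\beta\}$ (closed) and ${\rm H}_B=\{\a^tx>\beta\}$, so such an $x$ is consistently treated as lying in ${\rm H}_A$ and Lemma~\ref{le:ABH} applies to it verbatim; alternatively one may simply invoke continuity of $d_t$ and $d_{p_A,p_B}$ in their arguments. Everything else is a straightforward substitution, so I do not expect any real difficulty beyond correctly bookkeeping the case analysis.
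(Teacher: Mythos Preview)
Your proposal is correct and follows exactly the route the paper intends: the paper states that the proposition ``clearly follows from Lemma~\ref{le:ABH}'', and your argument is precisely the unpacking of that remark, using part~(1) of the lemma to collapse the two gate points into one and thereby identifying $d_t$ with $d_{p_A,p_B}$. Your additional bookkeeping (the same-side case and the boundary point $x\in\mathcal{H}$) is sound and just makes explicit what the paper leaves to the reader.
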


The description of the shortest path distances in \eqref{dt}, allows us to formulate Problem \eqref{PT0} as a mixed integer nonlinear programming problem in a similar manner as we did in Theorem \ref{theo1} for \eqref{p1}.

\begin{theorem}
\label{tt2}
Problem \eqref{PT0} is equivalent to the following problem:
\begin{subequations}
\label{PBT}
\begin{align}
\min & \dsum_{a\in A} \omega_a Z_a + \dsum_{b\in B} \omega_b Z_b \label{ct:0ab}\\
\mbox{s.t. }& \eqref{c:1a}, \eqref{c:1b}, \eqref{c:3a}, \eqref{c:3b}, \eqref{c:6},\eqref{c:7}, \nonumber\\
& w_a + u_a + t_a - Z_a \leq \hat M_a\,\gamma, &\forall a\in A,\label{ct:2a}\\
& w_b + u_b +t_b- Z_b \leq \hat M_b\,(1-\gamma),&\forall b\in B,\label{ct:2b}\\
& w_a \geq \|x-y^1_a\|_{p_B}, &\forall a\in A,\label{ct:4a}\\
& u_a \geq \|a - y^2_a\|_{p_A}, &\forall a\in A,\label{ct:5a}\\
& t_a \geq \|y^1_a - y^2_a\|_{p_A}, &\forall a\in A,\label{ct:6a}\\
& w_b \geq \|x-y^1_b\|_{p_A},&\forall b\in B,\label{ct:4b}\\
& u_b \geq \|b- y_b^2\|_{p_B},&\forall b\in B,\label{ct:5b}\\
& t_b \geq \|y^1_b - y^2_b\|_{p_B}, &\forall b\in B,\label{ct:6b}\\
& \a^t y^1_a   = \beta,  &\forall a\in A, \label{ct:8a1}\\
& \a^t y^1_b   = \beta, &\forall b\in B, \label{ct:8b1}
\end{align}
\begin{align}
& \a^t y^2_a   = \beta,  &\forall a\in A, \label{ct:8a2}\\
& \a^t y^2_b   = \beta, &\forall b\in B, \label{ct:8b2}\\
& Z_a, z_a, w_a, u_a, t_a, \geq 0, &\forall a\in A,\label{ct:9a}\\
& Z_b, z_b, w_b, u_b, t_b, \geq 0 &\forall b\in B,\label{ct:9b}\\
& y^1_a, y^2_a, y^1_b, y^2_b \in \R^d,& \forall a\in A, \; b\in B\label{ct:9c}\\
& \gamma \in \{0,1\}.\label{ct:9}
\end{align}
\end{subequations}
with $\hat M_a, \hat M_b > 0$ sufficiently large constants for all $a\in A, b\in B$. 
\end{theorem}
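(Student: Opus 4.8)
The plan is to mirror the proof of Theorem \ref{theo1} almost verbatim, the only novelty being that each shortest path crossing $\mathcal{H}$ now uses \emph{two} gate points rather than one, as dictated by the distance \eqref{dt}. First I would introduce the binary variable $\gamma$ exactly as before: $\gamma=1$ if $x\in {\rm H}_A$ and $\gamma=0$ if $x\in\overline{{\rm H}}_B$, and observe that constraints \eqref{c:6}, \eqref{c:7}, \eqref{ct:9} guarantee its correct interpretation (with $\gamma$ free to take either value when $x\in\mathcal{H}$). The auxiliary variables $Z_a$, $Z_b$ are again intended to carry the shortest path lengths $d_t(x,a)$, $d_t(x,b)$, while $z_a$, $z_b$ model the ``same–halfspace'' distances $\|x-a\|_{p_A}$ and $\|x-b\|_{p_B}$ through \eqref{c:3a}, \eqref{c:3b}, and for a point on the opposite side the triple $(w_\bullet,u_\bullet,t_\bullet)$ decomposes the crossing path into its three legs: facility–to–first–gate, last–gate–to–demand, and the in–hyperplane segment between the two gates, via \eqref{ct:4a}--\eqref{ct:6b}, with \eqref{ct:8a1}--\eqref{ct:8b2} forcing $y^1_\bullet, y^2_\bullet\in\mathcal{H}$.

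Next I would argue, as in Theorem \ref{theo1}, that it suffices to treat the case $x\in{\rm H}_A$ (so $\gamma=1$), the case $\gamma=0$ being symmetric. When $\gamma=1$: constraint \eqref{c:1a} forces $z_a-Z_a\le M_a$, which is vacuous, while \eqref{ct:2a} forces $w_a+u_a+t_a-Z_a\le 0$; since the demand point $a\in A$ lies on the \emph{same} side as $x$, the minimization of \eqref{ct:0ab} drives $Z_a$ down to $z_a=\|x-a\|_{p_A}$ (the would–be crossing path is never shorter, as it satisfies the triangle inequality, so the genuine three–leg bound is slack at optimum). Conversely, for $b\in B$ (opposite side) constraint \eqref{c:1b} becomes vacuous, \eqref{ct:2b} reads $w_b+u_b+t_b-Z_b\le 0$, and \eqref{ct:4b}, \eqref{ct:5b}, \eqref{ct:6b} together with \eqref{ct:8b1}, \eqref{ct:8b2} force
$$
Z_b \;=\; w_b+u_b+t_b \;=\; \min_{y^1_b,y^2_b\in\mathcal{H}}\ \|x-y^1_b\|_{p_A}+\|y^1_b-y^2_b\|_{p_B}+\|y^2_b-b\|_{p_B},
$$
which is precisely $d_t(x,b)$ by definition \eqref{dt} (note the roles of $p_A,p_B$ in the legs correspond to $x\in{\rm H}_A$, $b\in{\rm H}_B$, with the in–hyperplane segment measured by the norm of the region it departs from, consistently with \eqref{dt}). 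Hence at any optimal solution with $\gamma=1$ the objective \eqref{ct:0ab} equals $\sum_{a\in A}\omega_a\|x-a\|_{p_A}+\sum_{b\in B}\omega_b\,d_t(x,b)$, which is exactly the objective of \eqref{PT0} restricted to ${\rm H}_A$; the symmetric computation handles $\gamma=0$, and taking the minimum over the two cases — as the binary $\gamma$ does — recovers $f^*$ of \eqref{PT0}.

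The only point requiring a little care — and the main obstacle — is the well–definedness of the big–$M$ constants: one must exhibit finite $\hat M_a,\hat M_b, M$ large enough that the disjunctive constraints are inactive on the ``wrong'' branch yet never cut off the optimal solution. This is routine given compactness of the relevant region once the location is confined to a bounded box (as is standard in continuous location and as used implicitly elsewhere in the paper): $M$ bounds $|\a^tx-\beta|$ over that box, and $\hat M_a$ (resp. $\hat M_b$) bounds the difference between a single–side distance and a crossing distance for demand point $a$ (resp. $b$), both of which are continuous functions on a compact set. Beyond that, the argument is a direct transcription of the proof of Theorem \ref{theo1}, replacing the single gate variable $y_\bullet$ by the pair $(y^1_\bullet,y^2_\bullet)$ and the two–leg decomposition by the three–leg decomposition prescribed in \eqref{dt}.
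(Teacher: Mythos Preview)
Your approach is exactly what the paper intends: the theorem is stated there without an explicit proof, relying on the analogy with Theorem~\ref{theo1}, and you supply the natural details. There is, however, a slip in your $\gamma=1$ analysis for $a\in A$. Constraint \eqref{c:1a} is $z_a-Z_a\le M_a(1-\gamma)$, which at $\gamma=1$ reads $Z_a\ge z_a$ (active), while \eqref{ct:2a} becomes $w_a+u_a+t_a-Z_a\le\hat M_a$ (vacuous). You have these reversed. With the correct reading the argument is immediate --- minimization drives $Z_a$ down to $z_a=\|x-a\|_{p_A}$ --- and the triangle-inequality justification you give for the three-leg bound is unnecessary (and would not be valid as stated anyway, since the legs involve different norms so the usual triangle inequality does not directly compare the crossing path with $\|x-a\|_{p_A}$). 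Your handling of the $b\in B$ side, where \eqref{c:1b} is vacuous and \eqref{ct:2b} is active at $\gamma=1$, is correct.

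One further small point: in \eqref{dt} the in-hyperplane leg is measured with $\|\cdot\|_{p_H}$, so the middle term in your display for $Z_b$ should carry $p_H$, not $p_B$. The constraints \eqref{ct:6a}--\eqref{ct:6b} as printed use $p_A$ and $p_B$ there, which appears to be a typo in the formulation; in any case your parenthetical that this is ``consistent with \eqref{dt}'' is not accurate, since \eqref{dt} explicitly uses $p_H$ for that segment.
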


The following result states that the solution of Problem \eqref{PBT} can also be reached by solving two simpler problems when restricting the solution to belong to  ${\rm H}_A$ or ${\rm H}_B$.

\begin{theorem}
\label{tt3}
Let $x^*\in \R^d$ be the optimal solution of \eqref{PT0}. Then, $x^*$ is the solution of one of the following two problems:

\begin{minipage}{\linewidth}
\centering
\begin{minipage}{.45\linewidth}
\begin{align}
\min &\dsum_{a \in A} \omega_a z_a + \dsum_{b \in B} \omega_b w_b + \nonumber\\
 &\dsum_{b \in B} \omega_b u_b + \dsum_{b \in B} \omega_b t_b\nonumber\\
\mbox{s.t.} \; & \eqref{c:3a}, \eqref{ct:4b}, \eqref{ct:5b},\nonumber\\
&\eqref{ct:6b}, \eqref{ct:8b1}, \eqref{ct:8b2}, \eqref{HA}, \label{PTA}\tag{${\rm PT}_A$}\\
& z_a \geq 0,\;  \forall a \in A,\nonumber\\
& w_b, u_b, t_b \geq 0,\;  \forall b \in B,\nonumber\\
& x, y^1_b, y^2_b \in \R^d,\nonumber
\end{align}
\end{minipage}
\hspace*{0.02\linewidth}
\begin{minipage}{.48\linewidth}
\begin{align}
\min &\dsum_{b\in B} \omega_b z_b + \dsum_{a \in A} \omega_a w_a +\nonumber\\
 & \dsum_{a \in A} \omega_a u_a + \dsum_{a \in A} \omega_a t_a\nonumber\\
\mbox{s.t. } \; & \eqref{c:3b} , \eqref{ct:4a}, \eqref{ct:5a},\nonumber\\
& \eqref{ct:6a}, \eqref{ct:8a1}, \eqref{ct:8a2}, \eqref{HB},  \label{PTB}\tag{${\rm PT}_B$}\\
& z_b \geq 0,\;  \forall b \in B,\nonumber\\
& w_a, u_a, t_a \geq 0,\;  \forall a \in A,\nonumber\\
& x, y^1_a, y^2_a \in \R^d.\nonumber
\end{align}
\end{minipage}
\end{minipage}
\end{theorem}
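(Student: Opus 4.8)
The plan is to follow the pattern of the proof of Theorem \ref{t2}, but starting from the mixed-integer reformulation \eqref{PBT} supplied by Theorem \ref{tt2} in place of \eqref{PB}. First I would use Theorem \ref{tt2} to identify $x^*$ with the $x$-component of an optimal solution of \eqref{ct:0ab}--\eqref{ct:9}, together with its associated $\gamma^*\in\{0,1\}$. Constraints \eqref{c:6}, \eqref{c:7} and $\gamma^*\in\{0,1\}$ force $\gamma^*=1$ whenever $\a^tx^*<\beta$ and $\gamma^*=0$ whenever $\a^tx^*>\beta$, so it suffices to treat the two cases (a) $x^*\in{\rm H}_A$ and (b) $x^*\in\overline{{\rm H}}_B$.

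For case (a) I would set $\gamma=1$ and read off which big-$M$ inequalities become slack. With $\gamma=1$, \eqref{c:1a} gives $z_a\le Z_a$ and \eqref{ct:2b} gives $w_b+u_b+t_b\le Z_b$, whereas \eqref{c:1b} and \eqref{ct:2a} have right-hand sides equal to the large constants $M_b$, $\hat M_a$ and are therefore redundant, and \eqref{c:6}--\eqref{c:7} collapse to $\a^tx\le\beta$, i.e.\ \eqref{HA}. Since the objective \eqref{ct:0ab} is nondecreasing in the nonnegative variables $Z_a,Z_b$, which are otherwise free, at an optimum one has $Z_a=z_a$ and $Z_b=w_b+u_b+t_b$; combining this with the remaining relevant constraints \eqref{c:3a}, \eqref{ct:4b}, \eqref{ct:5b}, \eqref{ct:6b}, \eqref{ct:8b1}, \eqref{ct:8b2} and discarding the variables that no longer appear (the $A$-indexed variables $w_a,u_a,t_a,y^1_a,y^2_a$, together with the $z_b$ and with $\gamma$) yields exactly Problem \eqref{PTA}. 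Case (b) is obtained by the symmetric bookkeeping with $\gamma=0$, exchanging the roles of $A$ and $B$ (and of $\ell_{p_A}$ and $\ell_{p_B}$), and gives Problem \eqref{PTB}.

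Finally I would handle the degenerate overlap $x^*\in\mathcal{H}={\rm H}_A\cap\overline{{\rm H}}_B$, where $\gamma$ may legitimately take either value, so that $x^*$ is a priori feasible for both \eqref{PTA} and \eqref{PTB}. Here, exactly as in the closing argument of the proof of Theorem \ref{t2}, the hypothesis $p_A\ge p_B$ makes it never disadvantageous to route demand through ${\rm H}_A$, so the optimal value of \eqref{PTA} does not exceed that of \eqref{PTB} on $\mathcal{H}$ and $x^*$ may be taken as the optimal solution of \eqref{PTA}. The main obstacle is purely the careful bookkeeping of which big-$M$ constraints are active versus slack in each of the two cases, and the verification that the eliminated variables are genuinely unconstrained at an optimum; once this is done, the statement follows as a routine transcription of the argument already carried out for the refraction model of Section \ref{s:locpro}.
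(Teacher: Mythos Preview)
Your approach is correct and is precisely what the paper intends: Theorem \ref{tt3} is stated without proof, and the reader is implicitly referred back to the proof of Theorem \ref{t2}, which your proposal transcribes faithfully to the three-norm setting (replacing \eqref{PB} by \eqref{PBT}, the pair $(w,u)$ by the triple $(w,u,t)$, and the single gate $y$ by $y^1,y^2$). The bookkeeping you outline---which big-$M$ constraints go slack for each value of $\gamma$, the elimination of the $Z$-variables, and the $p_A\ge p_B$ tie-break on $\mathcal{H}$---matches the argument of Theorem \ref{t2} exactly.
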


\hspace*{-0.5cm}\begin{minipage}{\textwidth}
\centering
\begin{minipage}{.51\linewidth}
\begin{figure}[H]
\begin{center}
\begin{tikzpicture}[scale=0.65]
\draw (0,0) -- (9.4*0.45,9.4*0.45*1.5);
\draw (9.133220*0.45,6.897760*0.45) -- (5.00*0.45,5.00*0.45);
\draw (9.133220*0.45,6.897760*0.45) -- (6.00*0.45,1.00*0.45);
\draw (9.133220*0.45,6.897760*0.45) -- (7.00*0.45,4.00*0.45);
\draw (9.133220*0.45,6.897760*0.45) -- (8.00*0.45,8.00*0.45);
\draw (9.133220*0.45,6.897760*0.45) -- (9.00*0.45,1.00*0.45);
\draw (9.133220*0.45,6.897760*0.45) -- (9.00*0.45,5.00*0.45);
\draw (9.133220*0.45,6.897760*0.45) -- (9.00*0.45,10.00*0.45);
\draw (9.133220*0.45,6.897760*0.45) -- (10.00*0.45,12.00*0.45);
\draw (9.133220*0.45,6.897760*0.45) -- (14.00*0.45,2.00*0.45);
\draw (9.133220*0.45,6.897760*0.45) -- (14.00*0.45,4.00*0.45);
\draw (9.133220*0.45,6.897760*0.45) -- (16.00*0.45,8.00*0.45);
\draw (9.133220*0.45,6.897760*0.45) -- (17.00*0.45,4.00*0.45);
\draw (9.133220*0.45,6.897760*0.45) -- (17.00*0.45,10.00*0.45);
\draw (9.133220*0.45,6.897760*0.45) -- (19.00*0.45,13.00*0.45);
\draw  (9.133220*0.45,6.897760*0.45) -- (5.92*0.45,8.88*0.45);
\draw  (9.133220*0.45,6.897760*0.45) -- (5.92*0.45,8.88*0.45);
\draw  (9.133220*0.45,6.897760*0.45) -- (6.29*0.45,9.43*0.45);
\draw  (9.133220*0.45,6.897760*0.45) -- (6.45*0.45,9.68*0.45);
\draw  (5.918204*0.45,8.877305*0.45) -- (1.26*0.45,1.90*0.45);
\draw  (5.918243*0.45,8.877364*0.45) -- (4.64*0.45,6.95*0.45);
\draw  (6.288854*0.45,9.433281*0.45) -- (6.29*0.45,9.43*0.45);
\draw  (6.454227*0.45,9.681341*0.45) -- (6.79*0.45,10.19*0.45);
\draw  (1.264266*0.45,1.896400*0.45) -- (1.00*0.45,2.00*0.45);
\draw  (4.635013*0.45,6.952519*0.45) -- (2.00*0.45,8.00*0.45);
\draw  (6.288855*0.45,9.433283*0.45) -- (3.00*0.45,12.00*0.45);
\draw  (6.792231*0.45,10.188347*0.45) -- (6.00*0.45,11.00*0.45);
\begin{axis}[
    anchor=origin,  
    x=0.45cm, y=0.45cm,   
    ]

\addplot[
scatter,
only marks,
point meta=explicit symbolic,
scatter/classes={
a={mark=*},
b={mark=*},
z={mark=*,draw=black,fill=white},
w={mark=triangle*,black}
},
]
table[meta=label] {
x   y   label
1.00	 2.00 	 a
2.00	 8.00 	 a
3.00	 12.00 	 a
6.00	 11.00 	 a
5.00	 5.00 	 b
6.00	 1.00 	 b
7.00	 4.00 	 b
8.00	 8.00 	 b
9.00	 1.00 	 b
9.00	 5.00 	 b
9.00	 10.00 	 b
10.00	 12.00 	 b
14.00	 2.00 	 b
14.00	 4.00 	 b
16.00	 8.00 	 b
17.00	 4.00 	 b
17.00	 10.00 	 b
19.00	 13.00 	 b
9.133220	 6.897760 	 w
5.92	 8.88 	 z
5.92	 8.88 	 z
6.29	 9.43 	 z
6.45	 9.68 	 z
1.26	 1.90 	 z
4.64	 6.95 	 z
6.29	 9.43 	 z
6.79	 10.19 	 z

};

\end{axis}

\fill [color=black] (9.133220*0.45,6.897760*0.45) circle (3.5pt);
\draw(1*0.45, 7*0.45) node {${\rm H}_A$};
\draw(16*0.45, 1*0.45) node {${\rm H}_B$};
\draw(9.133220*0.45 +0.3 ,6.897760*0.45 - 0.55) node {$x^*$};
\end{tikzpicture}
\caption{Points and optimal solution of Example \ref{ex2}.\label{fig2}}
\end{center}
\end{figure}
\end{minipage}
\hspace{0.01\linewidth}
\begin{minipage}{.46\linewidth}
\begin{figure}[H]
\begin{center}
\begin{tikzpicture}[scale=0.65]
\draw (0,0) -- (9.4*0.45,9.4*0.45*1.5);

\draw[very thick]  (9.133220*0.45,6.897760*0.45) -- (5.92*0.45,8.88*0.45);

\draw[very thick]  (5.918243*0.45,8.877364*0.45) -- (4.64*0.45,6.95*0.45);

\draw[very thick]  (4.635013*0.45,6.952519*0.45) -- (2.00*0.45,8.00*0.45);

\begin{axis}[
    anchor=origin,  
    x=0.45cm, y=0.45cm,   
    ]

\addplot[
scatter,
only marks,
point meta=explicit symbolic,
scatter/classes={
a={mark=*},
b={mark=*},
z={mark=*,draw=black,fill=white},
w={mark=triangle*,black}
},
]
table[meta=label] {
x   y   label
1.00	 2.00 	 a
2.00	 8.00 	 a
3.00	 12.00 	 a
6.00	 11.00 	 a
5.00	 5.00 	 b
6.00	 1.00 	 b
7.00	 4.00 	 b
8.00	 8.00 	 b
9.00	 1.00 	 b
9.00	 5.00 	 b
9.00	 10.00 	 b
10.00	 12.00 	 b
14.00	 2.00 	 b
14.00	 4.00 	 b
16.00	 8.00 	 b
17.00	 4.00 	 b
17.00	 10.00 	 b
19.00	 13.00 	 b
9.133220	 6.897760 	 w
5.92	 8.88 	 z
4.64	 6.95 	 z
};

\end{axis}

\fill [color=black] (9.133220*0.45,6.897760*0.45) circle (3.5pt);
\draw(1*0.45, 6*0.45) node {${\rm H}_A$};
\draw(16*0.45, 1*0.45) node {${\rm H}_B$};
\draw(2*0.45 +0.0 ,8*0.45 + 0.3) node {\scriptsize $(2,8)$};
\draw(9.133220*0.45 +0.3 ,6.897760*0.45 - 0.15) node {$x^*$};
\draw(5.92*0.45 -0.1 ,8.88*0.45 + 0.25) node {$y^1$};
\draw(4.64*0.45 +0.3 ,6.95*0.45 - 0.15) node {$y^2$};
\end{tikzpicture}
\caption{Shortest path   from $x^*$ to $(2,8)$.\label{fig3}}
\end{center}
\end{figure}
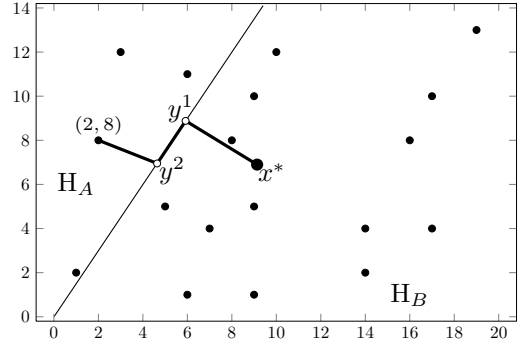
\end{minipage}
\end{minipage}

We illustrate Problem \eqref{PT0} with an instance of the 18 points data set in \cite{parlar}.
\begin{ex}
\label{ex2}
Consider the  $18$ points in \cite{parlar} and the separating line $\mathcal{H}=\{x \in \R^d: 1.5x - y = 0\}$. Assume that in  ${\rm H}_A$ the distance is measured with the $\ell_2$-norm, in ${\rm H}_B$ the distance is induced by the $\ell_3$-norm and on $\mathcal{H}$ the norm is $\frac{1}{4} \ell_\infty$. Fig. \ref{fig2} shows the demand points $A$ and $B$, the hyperplane $\mathcal{H}$ and the solution $x^*$. The optimal solution is $x^* = (9.133220, 6.897760)$ with objective value $f^*= 100.442353$.

Note that the difference between this model and the one above is that the shortest path distance from the new facility to a demand point may not cross the hyperplane $\mathcal{H}$ at a unique point.  Comparing the results with those obtained in Example \ref{ex1} for the same data set, but not allowing the use of $\mathcal{H}$ as a high speed media, we get savings in the overall transportation cost of $3.492381$ units. In Fig. \ref{fig3}, we can observe that the shortest path from the new facility $x^*$ and the demand point $(2, 8)$ consists of traveling  from $x^*$ to $y^1=(5.918243, 8.877364)$ in ${\rm H}_B$ (using the  $\ell_3$-norm), then traveling within the hyperplane $\mathcal{H}$ from $y_1$ to $y^2=(4.635013, 6.952519)$ (using the  $1/4-\ell_{\infty}$-norm) and finally to $(2,8)$ in ${\rm H}_A$ (using $\ell_2$-norm). Actually, the overall length of the path is:
$$
d_3(x^*, y^1) + \dfrac{1}{4} d_\infty(y^1, y^2) + d_2(y^2, (2,8)) = 3.447879 + 0.4812115 + 2.835578 = 6.7646685.
$$

\end{ex}

Finally, we state, for the sake of completeness, the following result whose proof is similar to the one for Theorem \ref{t:convex} and that extends the second order cone formulations in Theorem \ref{tt3} to the constrained case.

\begin{theorem}
 Let $\{g_{1}, \ldots, g_l\} \subset \mathbb{R}[X]$ be real polynomials and $\mathbf{K}:=\{x\in \mathbb{R}^{d}: g_{j}(x)\geq 0,\: j=1,\ldots ,l \}$ a basic closed, compact semialgebraic set with nonempty interior satisfying the Archimedean property, and consider the following problem
\begin{equation}\label{pb3}
\dmin_{x\in\mathbf{K}} \; \sum_{a\in A} \omega_a d_t(x,a)+\sum_{b\in B} \omega_b d_t(x,b).
\end{equation}
with $d_t(x, y)$ as defined in \eqref{dt}.
Assume that any of the following conditions hold:
\begin{enumerate}
\item $g_i(x)$ are concave for $i=1,\ldots,\ell$ and $-\sum_{i=1}^{l} \mu_i \nabla^2 g_i(x) \succ 0$ for each dual pair $(x,\mu)$ of the problem of minimizing any linear functional $c^tx$ on $\mathbf{K}$ (\textit{Positive Definite Lagrange Hessian} {\rm (PDLH)}).
\item $g_i(x)$ are sos-concave on $\mathbf{K}$ for $i=1,\ldots,\ell$ or $g_i(x)$ are concave on $\mathbf{K}$ and strictly concave on the boundary of $\mathbf{K}$ where they vanish, i.e. $\partial \mathbf{K}\cap \partial \{x\in \mathbb{R}^d: g_i(x)=0\}$, for all $i=1,\ldots, l$.
\item $g_i(x)$ are strictly quasi-concave on $\mathbf{K}$ for $i=1,\ldots, l$.
\end{enumerate}
Then, there exists a constructive  finite dimension embedding, which only depends on  $p_A, p_B, p_H$ and $g_i$, $i=1,\ldots,\ell$, such that \eqref{pb3} is equivalent to two semidefinite programming problems.
\end{theorem}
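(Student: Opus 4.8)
The plan is to follow the proof of Theorem~\ref{t:convex} step by step, using Theorem~\ref{tt3} in place of Theorem~\ref{t2} and carrying the third norm $\|\cdot\|_{p_H}$ through the argument. First I would note that the unconstrained counterpart of \eqref{pb3} is precisely Problem~\eqref{PT0}, and that by Theorem~\ref{tt3} an optimal solution of \eqref{PT0} is recovered as the better of the optimal solutions of the two subproblems \eqref{PTA} and \eqref{PTB}. In each of these subproblems the only nonlinear constraints are of the form $Z\ge\|X-Y\|_p$ with $p\in\{p_A,p_B,p_H\}$ rational; applying the reformulation \eqref{in:norm} from the proof of Theorem~\ref{socp1} verbatim --- now also to the hyperplane-norm constraints \eqref{ct:4b}--\eqref{ct:6b} of \eqref{PTA} and \eqref{ct:4a}--\eqref{ct:6a} of \eqref{PTB} --- replaces each of them by a bounded family of inequalities $x^2\le yz$ with $y,z\ge 0$ (the bound being $2\log\max\{r_A,r_B,r_H\}$ per coordinate and per demand point, by \cite[Lemma 3]{BPE2014}), which in turn become linear matrix inequalities through the Schur complement \eqref{eq:schur}. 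Exactly as in Corollary~\ref{t:teo2}, this shows that each of \eqref{PTA} and \eqref{PTB} --- minus the constraint $x\in\mathbf{K}$ --- is a semidefinite program whose structure depends only on $|A|$, $|B|$, $d$, $p_A$, $p_B$ and $p_H$.

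The remaining ingredient is the constraint $x\in\mathbf{K}$, and here the argument is the same as the one invoked for Theorem~\ref{t:convex}: under each of the hypotheses (1) PDLH, (2) sos-concavity (or concavity together with strict concavity on the part of $\partial\mathbf{K}$ where each $g_i$ vanishes), or (3) strict quasi-concavity of the $g_i$, the set $\mathbf{K}$ is semidefinite representable by \cite[Theorem 8]{BPE2014}, by means of a Lasserre-type moment--sum-of-squares relaxation. Since the intersection of semidefinite-representable sets is semidefinite representable, intersecting the feasible region of each subproblem with $\mathbf{K}$ and with the linear halfspace $\a^t x\le\beta$ (resp. $\a^t x\ge\beta$) again yields a semidefinite-representable set; minimizing the linear objective \eqref{ct:0ab} over it produces two semidefinite programs, and the one with smaller optimal value gives the optimal solution of \eqref{pb3}.

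It remains only to observe that every transformation used above --- the norm reformulation \eqref{in:norm}, the Schur complement \eqref{eq:schur}, and the representation of $\mathbf{K}$ from \cite{BPE2014} --- is explicit, so the embedding is constructive and, crucially, depends on the data only through $p_A$, $p_B$, $p_H$ and the polynomials $g_1,\dots,g_l$, not through $A$ or $B$. The point requiring most care, and the one I would expect to be the only real obstacle (though it is already settled in \cite{BPE2014}), is exactly this data-independence: one must verify that under conditions (1)--(3) the order of the moment relaxation certifying membership in $\mathbf{K}$ can be fixed a priori, independently of the linear functional being optimized, and that the additional norm $\|\cdot\|_{p_H}$ living on $\mathcal{H}$ introduces no behaviour beyond what the rationality of $p_H$ already controls. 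Granting this, the statement follows by the same bookkeeping as in Corollary~\ref{t:teo2} and Theorem~\ref{t:convex}.
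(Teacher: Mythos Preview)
Your proposal is correct and follows essentially the same approach as the paper, which in fact does not give a detailed proof but simply states that the argument is similar to that of Theorem~\ref{t:convex} with Theorem~\ref{tt3} replacing Theorem~\ref{t2}. Your write-up actually supplies more detail than the paper does, correctly identifying that the only new ingredient beyond Theorem~\ref{t:convex} is carrying the third norm $\|\cdot\|_{p_H}$ through the reformulation \eqref{in:norm}, and that the SDr property of $\mathbf{K}$ under hypotheses (1)--(3) is again delegated to \cite[Theorem 8]{BPE2014}.
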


\section{Computational Experiments}
\label{s:5}

We have performed a series of computational experiments to show the efficiency of the proposed formulations to solve problems \eqref{p1} and \eqref{PT0}.
Our SOCP formulations have been coded in Gurobi 5.6 and executed in a PC with an Intel Core i7 processor at 2x 2.40 GHz
 and 4 GB of RAM. We fixed the barrier convergence tolerance for QCP  in Gurobi to $10^{-10}$.

Our computational experiments have been organized in three blocks because the goal is different in each one of them. First, we report on the data sets already considered in Parlar \cite{parlar} and Zaferanieh et al. \cite{bsss}. These data are sets of $4$, $18$ (in \cite{parlar}), $30$ and $50$ (in \cite{bsss}) demand points in the plane and separating hyperplanes $y=0.5x,\; y=x,\; y=1.5x$. Second, we consider the well-known $50$-points data set in Eilon et. al \cite{eilon-watson} with different separating hyperplanes and norms in each one of the corresponding halfspaces. Finally, we also report on some randomly generated instances with $5,000$, $10,000$ and $50,000$ demand points in dimension 2, 3 and 5 and different combinations of norms.

The results of the first block are included in tables \ref{table:1} and  \ref{table:2}. Table \ref{table:1} shows in columns \texttt{CPUTime} (\cite{parlar,bsss}), $f^*$ (\cite{parlar,bsss}) and $x^*$(\cite{parlar,bsss}) the results reported in \cite{parlar} (for the $4$ and $18$ points data sets) and \cite{bsss} (for the $30$ and $50$ points data sets), and in columns \texttt{CPUTime}\eqref{p1}, $f^*$\eqref{p1} and $x^*$ \eqref{p1} the results obtained with our approach. In this table $N$ is the number of demand points, $\mathcal{H}$ is the equation of the separating hyperplane (line), \texttt{CPUTime} is the CPU-time and $f^*$ and $x^*$ are the objective value and coordinates of the optimal solution reported with the corresponding approach, respectively. In order to compare our objective values and those obtained in \cite{parlar} or \cite{bsss}, we have evaluated such  values by using the solution obtained in those papers, where the authors provided a precision of two decimal places. This evaluation was motivated because we found several typos in the values reported in the papers.  The goal of this block of data is to compare the quality of solutions obtained by the different methods. Comparing with our method, we point out that our solutions are superior since we always obtain better objective values than those in \cite{parlar} or \cite{bsss}.  These results are not surprising since both \cite{parlar} and \cite{bsss} apply approximate methods whereas our algorithm is exact. Furthermore, the approach in \cite{bsss} is much more computationally costly than ours. Additionally, in order to check whether a rapid transit line can improve the transportation costs from the demand points to the new facility, we report in Table \ref{table:2} the results obtained  for the same data sets applied to  Problem \eqref{PT0} taking $\|\cdot\|_H = \frac{1}{4} \ell_\infty$.   We observe that in this case the overall  saving in distance traveled ranges in $5\%$ to $24\%$.

Table \ref{table:3} reports the results of the second block of experiments. In this block, we test the implementation of our SOCP algorithm over the $50$-points data sets in \cite{eilon-watson}. The goals are: (1) to check the efficiency of our methodology for a well-known data set in location theory,  considering different norms in the different media, over the models \eqref{p1} and \eqref{PT0} (Note that in \cite{parlar} and \cite{bsss} only \eqref{p1} is solved and using $\ell_1$ and $\ell_2$-norms); and (2) to provide some benchmark instances to compare current and future methodologies for solving \eqref{p1} and \eqref{PT0}. To this end, we report CPU times and objective values for different combination of $\ell_p$-norms ($\ell_2$, $\ell_3$ and $\ell_{1.5}$) and polyhedral norms ($\ell_1$, $\ell_\infty$) fulfilling the conditions $p_A > p_B$ for Problem \eqref{p1} and $p_H > p_A \geq p_B$ for Problem \eqref{PT0} and different slopes for the separating hyperplane $\mathcal{H}=\{x \in \R^2: y=\lambda x\}$ with $\lambda \in \{1.5, 1, 0.5\}$ to classify the demand points.

Finally, Table \ref{table:random} shows the results of our computational test for the third block of experiments. The goal of this block is to explore the limits in: 1) number of demand points, 2) dimension of the framework space;  and  3) combination of norms, that can be adequately handled by our algorithm for solving problems \eqref{p1} and \eqref{PT0}. To this for, we consider randomly generated instances with $N \in \{ 5000, 10000, 50000\}$ demand points in  $[0,1]^d$, for $d=2, 3$ and $5$. The separating hyperplane was taken as $\mathcal{H}=\{x \in \R^d: x_d=0.5\}$ and the different norms to measure the distances in each region ($\ell_1$, $\ell_2$, $\ell_{1.5}$, $\ell_3$ and $\ell_\infty$) combined adequately to fulfill the conditions (see { Lemma \ref{le:ABH}} and Proposition \ref{pro:reduce}) to assure that the problems are well-defined and that the different instances of Problem \eqref{PT0} do not reduce to  \eqref{p1}. From Table \ref{table:3}, we conclude that our method is rather robust so that it can efficiently solve instances with more than 50000 demand points in high dimension spaces ($d=2, 3 , 5$) and different combinations of norms in few seconds.  We have observed that instances with polyhedral norms, in particular $\ell_1$, are in general harder to solve than those with smooth norms.  This behavior is explained because the representation of polyhedral norms requires to add constraints depending of the number of extreme points of their unit balls. This figure grows exponentially with the dimension and for instance, for  $50000$ points in dimension $d=5$, our formulation needs $50000 \times 5 \times 32 = 8,000,000$ linear inequalities in order to represent the norm $\ell_1$. This results in an average CPU time of $1019.48$ seconds (with a maximum of $3945.82$ seconds) for those problems where either $\ell_{p_A}$ or $\ell_{p_B}$ equals $\ell_1$, whereas the CPU time for the remaining problems  in dimension $d=5$  is $215.69$ seconds (with a maximum of $697.50$ seconds).

\renewcommand{\arraystretch}{1.5}
\renewcommand{\tabcolsep}{0.11cm}
{\small

\begin{table}
    \begin{tabular}{|c|c|c|c|c||c|c|c|}
    \hline
    $N$     & $\mathcal{H} $     & \texttt{CPUTime} \eqref{p1} & $f^*$  \eqref{p1} & $x^*$ \eqref{p1} & \texttt{CPUTime }\cite{parlar,bsss} & $f^*$  \cite{parlar,bsss} & $x^*$  \cite{parlar,bsss}\\ \hline
    4     & $y=x$   &  0.037041 & 26.951942 & $(3.333333, 1.666666)$ & 49.62 & 26.951958 & $(3.33,1.66)$ \\\hline
    18    & $y=1.5x$ & 0.057064 & 112.350633 & $(8.926152, 6.465740)$ & 35.54 &  112.350702 & $(8.92,6.46)$   \\\hline
    30    & $y=0.5x$ & 0.056049 & 301.378686 & $(6.000000, 4.000000)$ & 8.25 & 301.491361 & $(6.01, 4.02)$  \\\hline
    30    & $y=x$   &  0.076050 &  265.971645 & $(5.658661, 4.586579)$ & 15.31 & 265.973315 & $(5.65, 4.60)$   \\\hline
    30    & $y=1.5x$ & 0.074053 & 257.814199 & $(5.512428, 4.561921)$ & 16.94 & 257.814247 & $(5.51, 4.56)$  \\\hline
    50    & $y=0.5x$ & 0.107079 & 1126.392248 & $(11.000000, 8.000000)$ & 35.00 & 1127.382313 & $(11.23, 8.00)$  \\\hline
    50    & $y=x$   &  0.116091 & 966.377027 & $(10.730800, 8.661463)$ & 30.61 & 966.377615 & $(10.73, 8.67)$   \\\hline
    50    & $y=1.5x$ &  0.095062 &  939.487369 & $(10.525793, 8.603231)$ & 29.44 &  939.487629 & $(10.53, 8.60)$ \\
    \hline
    \end{tabular}%
\caption{Comparison of  results from  Parlar \cite{parlar} and  Zafaranieh et al. \cite{bsss} and our approach \eqref{p1}.\label{table:1}}
\end{table}%

\renewcommand{\arraystretch}{1.5}
\renewcommand{\tabcolsep}{0.15cm}

\begin{table}
  \centering
    \begin{tabular}{|c|c|c|c|c|}\hline
    $N$     & $\mathcal{H} $    & \texttt{CPUTime}\eqref{PT0}  & $f^*$ \eqref{PT0} & $x^*$ \eqref{PT0} \\\hline
  4     & $y=x$   &    0.0000 & 20.5307 &  $(0.000000, 0.000001)$ \\\hline
18    & $y=1.5x$ &       0.0000 & 108.3362 & $(8.811381, 7.119336)$ \\    \hline
30    & $y=0.5x$ &       0.0156 & 254.7805 & $(6.000000, 3.000000)$ \\    \hline
30    & $y=x$   &        0.0000 & 230.7513 & $(5.234851, 5.234838)$ \\\hline
30    & $y=1.5x$ &       0.0156 & 244.4072 & $(5.153294, 5.102873)$ \\\hline
50    & $y=0.5x$ &       0.0156 & 917.1736 & $(11.923664, 5.961832)$ \\\hline
50    & $y=x$   &        0.0156 & 808.2990 & $(10.000020, 9.999995)$ \\\hline
50    & $y=1.5x$ &       0.0156 & 892.4482 & $(10.521522, 9.571467)$ \\\hline
    \end{tabular}%
\caption{Results of model \eqref{PT0} with $\|\cdot\|_H = \frac{1}{4} \ell_\infty$ for the data sets in \cite{parlar} and \cite{bsss}.\label{table:2}}
\end{table}%

\begin{table}
\begin{center}
\begin{tabular}{|c|c|c|c|c||c|c||c|c|}
\cline{4-9}\multicolumn{1}{r}{} & \multicolumn{1}{r}{} &       & \multicolumn{2}{c||}{$\mathcal{H}=\{y=1.5x\}$ ($|A|=15$)} & \multicolumn{2}{c||}{$\mathcal{H}=\{y=x\}$ ($|A|=18$)} & \multicolumn{2}{c|}{$\mathcal{H}=\{y=0.5x\}$ ($|A|=39$)} \\
\hline
\multicolumn{1}{|c|}{$p_A$} & \multicolumn{1}{c|}{$p_B$} & \multicolumn{1}{c|}{$p_H$} & \texttt{CPUTime} & $f^*$ & \texttt{CPUTime} & $f^*$ & \texttt{CPUTime} & $f^*$ \\
\hline
1.5   & 1     & \multirow{10}[0]{*}{ } & 0.0000 & 230.8447 & 0.0313 & 212.9341 & 0.0156 & 200.6406 \\
\cline{1-2}\cline{4-9}\multirow{2}[0]{*}{2} & 1     &       & 0.0158 & 227.9991 & 0.0156 & 202.6576 & 0.0000 & 185.9525 \\
\cline{2-2}\cline{4-9}      & 1.5   &       & 0.0313 & 194.1881 & 0.0313 & 189.0401 & 0.0156 & 182.1283 \\
\cline{1-2}\cline{4-9}\multirow{3}[0]{*}{3} & 1     &       & 0.0313 & 223.8203 & 0.0469 & 194.1612 & 0.0156 & 174.0444 \\
\cline{2-2}\cline{4-9}      & 1.5   &       & 0.0156 & 192.0466 & 0.0469 & 180.9279 & 0.0313 & 170.3199 \\
\cline{2-2}\cline{4-9}      & 2     &       & 0.0156 & 178.2223 & 0.0312 & 174.8964 & 0.0313 & 168.5066 \\
\cline{1-2}\cline{4-9}\multirow{4}[0]{*}{ $\infty$} & 1     &       & 0.0000 & 219.8367 & 0.0000 & 182.1900 & 0.0000 & 161.2033 \\
\cline{2-2}\cline{4-9}      & 1.5   &       & 0.0313 & 188.7783 & 0.0156 & 168.9589 & 0.0000 & 157.2146 \\
\cline{2-2}\cline{4-9}      & 2     &       & 0.0156 & 175.4420 & 0.0156 & 163.6797 & 0.0000 & 155.6124 \\
\cline{2-2}\cline{4-9}      & 3     &       & 0.0156 & 164.5924 & 0.0156 & 159.3740 & 0.0156 & 154.3965 \\
\hline\hline
\multirow{4}[0]{*}{1} & \multirow{4}[0]{*}{1} & 1.5   & 0.0156 & 237.4732 & 0.0156 & 224.9178 & 0.0000 & 236.1300 \\
\cline{3-9}      &       & 2     & 0.0000 & 237.3162 & 0.0156 & 218.9480 & 0.0000 & 235.4689 \\
\cline{3-9}      &       & 3     & 0.0156 & 236.3904 & 0.0156 & 213.5591 & 0.0156 & 234.9807 \\
\cline{3-9}      &       &  $\infty$ & 0.0000 & 233.7967 & 0.0156 & 204.3500 & 0.0000 & 234.7300 \\
\hline
\multirow{6}[0]{*}{1.5} & \multirow{3}[0]{*}{1} & 2     & 0.0156 & 230.8165 & 0.0313 & 206.9512 & 0.0469 & 200.5514 \\
\cline{3-9}      &       & 3     & 0.0625 & 228.5484 & 0.0938 & 201.5863 & 0.0156 & 200.3068 \\
\cline{3-9}      &       &  $\infty$ & 0.0313 & 225.9387 & 0.0156 & 192.4722 & 0.0156 & 200.1428 \\
\cline{2-9}      & \multirow{3}[0]{*}{1.5} & 2     & 0.0313 & 196.5559 & 0.0469 & 193.3584 & 0.0313 & 196.4864 \\
\cline{3-9}      &       & 3     & 0.0469 & 196.5561 & 0.0469 & 188.3989 & 0.0313 & 196.3008 \\
\cline{3-9}      &       &  $\infty$ & 0.0156 & 196.5431 & 0.0469 & 179.3396 & 0.0313 & 196.1787 \\
\hline
\multirow{6}[0]{*}{2} & \multirow{2}[0]{*}{1} & 3     & 0.0156 & 225.7539 & 0.0313 & 197.2805 & 0.0156 & 185.9501 \\
\cline{3-9}      &       &  $\infty$ & 0.0156 & 223.1421 & 0.0156 & 188.1506 & 0.0156 & 185.9133 \\
\cline{2-9}      & \multirow{2}[0]{*}{1.5} & 3     & 0.0469 & 194.1881 & 0.0469 & 184.0770 & 0.0313 & 182.1271 \\
\cline{3-9}      &       &  $\infty$ & 0.0156 & 194.1881 & 0.0313 & 175.0117 & 0.0158 & 182.0955 \\
\cline{2-9}      & \multirow{2}[0]{*}{2} & 3     & 0.0156 & 180.1096 & 0.0156 & 178.0624 & 0.0156 & 180.1097 \\
\cline{3-9}      &       &  $\infty$ & 0.0156 & 180.1097 & 0.0156 & 169.7842 & 0.0156 & 180.0857 \\
\hline
\multirow{4}[0]{*}{3} & 1     & \multirow{4}[0]{*}{ $\infty$} & 0.0313 & 221.2011 & 0.0156 & 184.9957 & 0.0313 & 174.0442 \\
\cline{2-2}\cline{4-9}      & 1.5   &       & 0.0313 & 192.0466 & 0.0313 & 171.8455 & 0.0313 & 170.3199 \\
\cline{2-2}\cline{4-9}      & 2     &       & 0.0156 & 178.2223 & 0.0313 & 166.6027 & 0.0156 & 168.5066 \\
\cline{2-2}\cline{4-9}      & 3     &       & 0.0312 & 166.8362 & 0.0469 & 162.3214 & 0.0313 & 166.8361 \\
\hline
\end{tabular}%
\end{center}
\caption{Results for the $50$-points data set in \cite{eilon-watson}.\label{table:3}}
\end{table}

\renewcommand{\tabcolsep}{0.13cm}

\begin{table}
{
\begin{tabular}{|c|c|c|c|c|c||c|c|c||c|c|c|}
\cline{4-12}\multicolumn{1}{c}{} & \multicolumn{1}{c}{} &       & \multicolumn{3}{c||}{$|A|+|B| = 5000$} & \multicolumn{3}{c||}{$|A|+|B| = 10000$} & \multicolumn{3}{c|}{$|A|+|B| = 50000$} \\
\hline
$p_A$ & $p_B$ & $p_H$  & $d=2$ & $d=3$ & $d=5$ & $d=2$ & $d=3$ & $d=5$ & $d=2$ & $d=3$ & $d=5$ \\
\hline
1.5   & 1     & \multirow{10}[0]{*}{} &  3.2034 & 5.4599 & 10.1520 & 7.4852 & 9.2511 & 19.0804 & 40.9418 & 74.9246 & 115.2941 \\
\cline{1-2}\cline{4-12}\multirow{2}[0]{*}{2} & 1     &       & 1.5939 & 2.2502 & 7.6415 & 5.1255 & 8.2040 & 14.0078 & 21.8708 & 25.9411 & 59.7786 \\
\cline{2-2}\cline{4-12}      & 1.5   &       & 3.9692 & 6.0632 & 4.5474 & 8.1728 & 14.0797 & 23.8067 & 55.2635 & 83.8310 & 154.2883 \\
\cline{1-2}\cline{4-12}\multirow{3}[0]{*}{3} & 1     &       & 3.9222 & 5.1412 & 6.9852 & 6.8132 & 9.4927 & 20.6114 & 42.9964 & 61.4724 & 116.4665 \\
\cline{2-2}\cline{4-12}      & 1.5   &       & 5.4850 & 10.0950 & 13.4449 & 14.3149 & 21.0337 & 34.0574 & 91.9616 & 106.6900 & 206.6997 \\
\cline{2-2}\cline{4-12}      & 2     &       & 7.9385 & 9.8603 & 10.1802 & 14.2672 & 17.7362 & 38.0629 & 95.3150 & 135.0647 & 180.6230 \\
\cline{1-2}\cline{4-12}\multirow{4}[0]{*}{ $\infty$} & 1     &       & 0.3125 & 0.6940 & 9.4607 & 0.8750 & 1.6096 & 6.3288 & 6.0945 & 25.7856 & 89.7772 \\
\cline{2-2}\cline{4-12}      & 1.5   &       & 1.2346 & 2.2502 & 8.6333 & 5.6724 & 4.9605 & 9.1259 & 18.8410 & 32.5503 & 54.0310 \\
\cline{2-2}\cline{4-12}      & 2     &       & 0.8908 & 1.2188 & 15.9704 & 1.9534 & 2.7346 & 7.9853 & 18.8615 & 17.2053 & 40.5464 \\
\cline{2-2}\cline{4-12}      & 3     &       & 3.4691 & 2.7346 & 12.0584 & 9.5637 & 6.7195 & 9.5323 & 71.7654 & 70.1868 & 49.5907 \\
\hline\hline
\multirow{4}[0]{*}{1} & \multirow{4}[0]{*}{1} & 1.5   & 18.9396 & 28.7109 & 15.6735 & 37.5415 & 80.9833 & 401.8414 & 596.6057 & 878.6363 & 3171.6235 \\
\cline{3-12}      &       & 2     & 13.7043 & 24.4318 & 13.2359 & 29.2056 & 68.3894 & 372.3283 & 354.3334 & 721.5562 & 3166.1511 \\
\cline{3-12}      &       & 3     & 17.5702 & 25.1258 & 3.8570 & 39.3008 & 93.4990 & 415.0733 & 541.8219 & 1014.1090 & 3945.8234 \\
\cline{3-12}      &       &  $\infty$ & 4.9695 & 11.7517 & 3.1101 & 13.7673 & 26.7468 & 96.7260 & 133.7586 & 632.9736 & 2492.2830 \\
\hline
\multirow{6}[0]{*}{1.5} & \multirow{3}[0]{*}{1} & 2     & 5.2506 & 8.2509 & 4.6457 & 13.7986 & 16.0956 & 37.3793 & 105.4177 & 103.2694 & 273.0866 \\
\cline{3-12}      &       & 3     & 6.2975 & 11.9545 & 4.0473 & 13.2135 & 24.9720 & 57.8267 & 96.9583 & 128.9880 & 326.7660 \\
\cline{3-12}      &       &  $\infty$ & 3.6722 & 5.5632 & 4.1409 & 7.0632 & 13.1580 & 31.0345 & 46.1239 & 81.3482 & 118.2435 \\
\cline{2-12}      & \multirow{3}[0]{*}{1.5} & 2     & 12.9546 & 15.8455 & 3.7347 & 23.3466 & 29.3155 & 46.6898 & 138.6629 & 200.2891 & 385.1307 \\
\cline{3-12}      &       & 3     & 13.5232 & 14.9234 & 4.5473 & 22.2837 & 33.9099 & 53.9483 & 171.0538 & 175.6803 & 697.5071 \\
\cline{3-12}      &       &  $\infty$ & 12.0022 & 11.5482 & 3.9533 & 21.8464 & 22.1743 & 37.0102 & 111.1779 & 144.5975 & 241.2852 \\
\hline
\multirow{6}[0]{*}{2} & \multirow{2}[0]{*}{1} & 3     & 3.5316 & 7.6883 & 125.3288 & 9.8294 & 11.5794 & 41.0986 & 61.4067 & 62.9410 & 158.6635 \\
\cline{3-12}      &       &  $\infty$ & 1.7034 & 3.3288 & 145.9833 & 3.5629 & 7.7041 & 15.4610 & 22.8465 & 38.9976 & 98.4269 \\
\cline{2-12}      & \multirow{2}[0]{*}{1.5} & 3     & 5.6255 & 9.3605 & 105.3967 & 13.4234 & 19.0805 & 45.4697 & 71.1114 & 101.3439 & 269.3303 \\
\cline{3-12}      &       &  $\infty$ & 5.1256 & 5.4850 & 137.3159 & 7.6791 & 16.5075 & 24.8255 & 63.0027 & 85.4602 & 134.8291 \\
\cline{2-12}      & \multirow{2}[0]{*}{2} & 3     & 6.6725 & 9.4387 & 132.3028 & 12.1731 & 20.4003 & 39.2473 & 79.9453 & 121.0863 & 220.7875 \\
\cline{3-12}      &       &  $\infty$ & 4.6879 & 5.4607 & 153.6319 & 9.4696 & 14.5639 & 22.6620 & 68.1690 & 63.1358 & 118.4005 \\
\hline
\multirow{4}[0]{*}{3} & 1     & \multirow{4}[0]{*}{ $\infty$} & 3.7357 & 6.5511 & 17.7052 & 7.8602 & 10.1575 & 34.1457 & 37.1292 & 48.5630 & 140.3546 \\
\cline{2-2}\cline{4-12}      & 1.5   &       & 7.7665 & 10.4455 & 17.7145 & 15.2061 & 26.2626 & 37.2546 & 84.7931 & 119.5438 & 235.1177 \\
\cline{2-2}\cline{4-12}      & 2     &       & 7.6569 & 10.6885 & 17.4306 & 16.5483 & 23.6745 & 44.5896 & 99.2611 & 227.0411 & 219.4903 \\
\cline{2-2}\cline{4-12}      & 3     &       & 9.8843 & 10.0948 & 19.1583 & 19.2838 & 21.8153 & 43.0209 & 129.5420 & 153.3979 & 243.4983 \\
\hline
\end{tabular}%
}
\caption{CPU Times in seconds for randomly generated data sets.\label{table:random}}
\end{table}}

\section{Conclusions and Extensions}
\label{s:6}

This paper addresses the problem of locating a new facility on a $d$-dimensional space when the distance measures ($\ell_p$ or polyhedral norms) are different at each one of the sides of a given hyperplane $\mathcal{H}$.  This problem generalizes the classical Weber problem, which becomes a particular case when the same norm is considered in both sides of the hyperplane. We relate this problem  with the physical phenomenon of refraction and obtain an extension of the law of Snell with application to transportation models with several transportation modes.  We also extend the problem to the case where the hyperplane is considered as a rapid transit media  that allows the demand points to travel faster through $\mathcal{H}$ to reach the new facility. Extensive computational experiments run in Gurobi are reported in order to show the effectiveness of the approach.

Several extensions of the results in this paper are possible applying similar tools to those used here. Among them we mentioned the consideration of a broader family of Location problems, namely Ordered median problems \cite{NP05} with framework space separated  by a hyperplane. Similar results to the ones in this paper can be obtained assuming that the sequence of lambda weights is non-decreasing monotone, inducing a convex objective function. Another, interesting extension is the consideration of a framework space subdivided by an arrangement of hyperplanes. In this case, the problem can still be solved using an enumerative approach based on the subdivision of the space induced by the hyperplanes. Note that the subdivision induced by an arrangement of hyperplanes  can be efficiently computed \cite{edels}, although its complexity is exponential in the dimension of the space.  Furthermore, the norm-representation used in our formulations allows us to consider even different norms for  each demand point. This framework would model situations in which  each demand point  is able to use an individual  transportation mode which can be different from the one used by the remaining users in the region.

\section*{acknowledgements}
The authors were partially supported by the project FQM-5849 (Junta de Andaluc\'ia$\backslash$FEDER). The first and second authors were partially supported by the project  MTM2010-19576-C02-01 (MICINN, Spain).

\bibliographystyle{elsarticle-harv}

\end{document}